\documentclass[10pt]{article}

\evensidemargin=0pt \oddsidemargin=0pt \marginparsep=0pt
\textwidth=6.5in \topmargin=-37pt 
\textheight=9.0in
\usepackage{palatino}
\usepackage{float}

\usepackage{url}
\usepackage{amsmath,amsthm,amssymb,amsbsy}
\usepackage{mathdots}
\usepackage{paralist}
\usepackage{xcolor}
\usepackage{color}
\usepackage{graphicx}
\usepackage{algorithm,algpseudocode}
\usepackage{comment}

\usepackage{fancyhdr}
\usepackage{cite}
\usepackage{cleveref}
\usepackage{enumerate}


\newtheorem{thm}{Theorem}
\newtheorem{lemma}{Lemma}
\newtheorem{cor}{Corollary}
\newtheorem{definition}{Definition}

\theoremstyle{remark}

\newcommand{\R}{\mathbb{R}}
\def \real    { \mathbb{R} }

\newcommand{\C}{\mathbb{C}}
\def \complex{\mathbb{C}}

\newcommand{\Z}{\mathbb{Z}}
\newcommand{\N}{\mathbb{N}}



\newcommand{\E}{\operatorname{E}}


\newcommand{\e}{\mathrm{e}}

\newcommand{\vct}[1]{\boldsymbol{#1}}
\newcommand{\mtx}[1]{\boldsymbol{#1}}




\newcommand{\Null}{\operatorname{Null}}
\newcommand{\Range}{\operatorname{Range}}


\newcommand{\rank}{\operatorname{rank}}

\newcommand{\dimension}{\operatorname{dim}}

\newcommand{\set}[1]{\mathcal{#1}}





\newcommand{\sinc}{\text{sinc}}

\newcommand{\ceil}[1]{\left\lceil #1 \right\rceil}
\newcommand{\eps}{\epsilon}


\newcommand{\calT}{\mathcal{T}}

\newcommand{\calB}{\mathcal{B}}

\newcommand{\va}{\vct{a}}
\newcommand{\vb}{\vct{b}}

\newcommand{\ve}{\vct{e}}

\newcommand{\vg}{\vct{g}}

\newcommand{\vs}{\vct{s}}

\newcommand{\vv}{\vct{v}}

\newcommand{\vx}{\vct{x}}
\newcommand{\vy}{\vct{y}}

\newcommand{\mA}{\mtx{A}}
\newcommand{\mB}{\mtx{B}}
\newcommand{\mC}{\mtx{C}}
\newcommand{\mD}{\mtx{D}}
\newcommand{\mE}{\mtx{E}}
\newcommand{\mF}{\mtx{F}}

\newcommand{\mH}{\mtx{H}}
\newcommand{\mJ}{\mtx{J}}

\newcommand{\mL}{\mtx{L}}
\newcommand{\mM}{\mtx{M}}

\newcommand{\mS}{\mtx{S}}
\newcommand{\mT}{\mtx{T}}
\newcommand{\mU}{\mtx{U}}
\newcommand{\mV}{\mtx{V}}

\newcommand{\mX}{\mtx{X}}

\newcommand{\mZ}{\mtx{Z}}

\newcommand{\mLambda}{\mtx{\Lambda}}

\newcommand{\mId}{{\bf I}}

\newcommand{\mzero}{{\bf 0}}

\newcommand{\setI}{\set{I}}

\newcommand{\setS}{\set{S}}

\setcounter{MaxMatrixCols}{20}

\pagestyle{plain}

\graphicspath{{./figs/}}

\newlength{\imgwidth}
\setlength{\imgwidth}{3.125in}

\newboolean{twoColVersion}
\setboolean{twoColVersion}{false}
\newcommand{\twoCol}[2]{\ifthenelse{\boolean{twoColVersion}} {#1} {#2} }

\newcommand\blfootnote[1]{
  \begingroup
  \renewcommand\thefootnote{}\footnote{#1}
  \addtocounter{footnote}{-1}
  \endgroup
}

\begin{document}

\title{The Fast Slepian Transform}

\vspace{2mm}
\author{Santhosh Karnik, Zhihui Zhu, Michael B. Wakin, Justin Romberg, Mark A. Davenport}

\maketitle

\begin{abstract}
The discrete prolate spheroidal sequences (DPSS's) provide an efficient representation for discrete signals that are perfectly timelimited and nearly bandlimited. Due to the high computational complexity of projecting onto the DPSS basis -- also known as the {\em Slepian basis} -- this representation is often overlooked in favor of the fast Fourier transform (FFT). We show that there exist fast constructions for computing approximate projections onto the leading Slepian basis elements. The complexity of the resulting algorithms is comparable to the FFT, and scales favorably as the quality of the desired approximation is increased. In the process of bounding the complexity of these algorithms, we also establish new nonasymptotic results on the eigenvalue distribution of discrete time-frequency localization operators. We then demonstrate how these algorithms allow us to efficiently compute the solution to certain least-squares problems that arise in signal processing. We also provide simulations comparing these fast, approximate Slepian methods to exact Slepian methods as well as the traditional FFT based methods.
\end{abstract}

\blfootnote{S. Karnik, J. Romberg, and M. A. Davenport are with the School of Electrical and Computer Engineering, Georgia Institute of Technology, Atlanta, GA, 30332 USA (e-mail: skarnik1337@gatech.edu, jrom@ece.gatech.edu, mdav@gatech.edu). Z. Zhu and M. B. Wakin are with the  Department of Electrical Engineering, Colorado School of Mines, Golden, CO USA (e-mail: zzhu@mines.edu, mwakin@mines.edu). This work was supported by NSF grants CCF-1409261 and CCF-1409406.  A preliminary version of this paper highlighting some of the key results also appeared in~\cite{KarnikZWRD_Fast}.}

\section{Introduction}
\label{sec:intro}
Any bandlimited signal must have infinite duration.  No signal which is compactly supported in time can be bandlimited. These well-known mathematical facts stand in tension with the fact that real-world signals would seem to be {\em both} bandlimited {\em and} timelimited -- a real signal ought not to have energy at arbitrarily high frequencies and certainly ought to have a beginning and end.

One possible resolution of this ``paradox'' was provided by Landau, Pollak, and Slepian, who wrote a series of seminal papers exploring the degree to which a bandlimited signal can be {\em approximately} timelimited~\cite{SlepiP_ProlateI,LandaP_ProlateII,LandaP_ProlateIII,Slepi_ProlateIV,Slepi_ProlateV} (see also~\cite{Slepi_On,Slepi_Some} for beautiful and concise overviews of this body of work). In these papers, we find answers to questions such as ``Which bandlimited signals are most concentrated in time?'' and ``What is the (approximate) dimension of the space of signals which are both bandlimited {\em and} (approximately) timelimited?'' The answer to both of these questions turns out to involve a very special class of functions -- the {\em prolate spheroidal wave functions} (PSWF's) in the continuous case and the {\em discrete prolate spheroidal sequences} (DPSS's) in the discrete case. As shown by Landau, Pollak, and Slepian, these functions provide a natural basis to use in a wide variety of applications involving bandlimiting/timelimiting.

While this body of work has provided a great deal of theoretical insight into a range of problems, it has been somewhat less useful in terms of practical applications.  DPSS's, which form what we will refer to as the {\em Slepian basis}, provide the most natural basis to use in analyzing a finite-length vector of samples of a bandlimited signal. Nevertheless, they are rarely used in practice; the {\em discrete Fourier transform} (DFT) is a {\em far} more common choice. In many cases, this choice is being made not because the Fourier basis provides a more appropriate representation, but because the {\em fast Fourier transform} (FFT) gives us a highly-efficient method for working with the Fourier basis.  The Slepian basis, in contrast, comes with no such tools. Indeed, merely computing the DPSS's themselves (which lack a closed form solution) is a nontrivial computational challenge.  It is our purpose in this paper to fill this gap by providing computational tools comparable to the FFT for working with the Slepian basis. In the process we will also provide new nonasymptotic results concerning fundamental properties of DPSS's.

The key insight to these fast computational tools is observing the structural similarity between the prolate matrix and an orthogonal projection matrix corresponding to the span of the low frequency DFT vectors. The prolate matrix is a Toeplitz matrix whose entries are samples of the sinc function. The eigenvectors of this matrix are the DPSSs, and most of the eigenvalues are clustered very near zero or very near one. The orthogonal projection matrix corresponding to the span of the low frequency DFT vectors is a circulant matrix whose entries are samples of the digital sinc function (also known as the Dirchlet function). The eigenvectors of this matrix are the DFT vectors, and the eigenvalues are all either exactly zero or exactly one. These similarities motivate us to show that the difference between these two matrices is approximately a low rank matrix. From this, we get a bound on the number of eigenvalues of the prolate matrix which are not very close to either zero or one. This bound then allows us to approximate several matrices, which are related to the Slepian basis, as the sum of a Toeplitz matrix and a low rank matrix, thus giving rise to the fast computational tools.

\subsection{The Slepian basis}

To begin, we provide a formal definition of the Slepian basis and briefly describe some of the key results from Slepian's 1978 paper on DPSS's~\cite{Slepi_ProlateV}. Given any $N \in \N$ and $W \in (0,\tfrac{1}{2})$, the DPSS's are a collection of $N$ discrete-time sequences that are strictly bandlimited to the digital frequency range $|f| \le W$ yet highly concentrated in time to the index range $n = 0,1,\dots,N-1$. The DPSS's are defined to be the eigenvectors of a two-step procedure in which one first time-limits the sequence and then bandlimits the sequence. Before we can state a more formal definition, let us note that for a given discrete-time signal $x[n]$, we let
$$
X(f) = \sum_{n=-\infty}^\infty x[n] e^{-j 2 \pi f n}
$$
denote the {\em discrete-time Fourier transform} (DTFT) of $x[n]$. Next, we let $\calB_W$ denote an operator that takes a discrete-time signal, bandlimits its DTFT to the frequency range $|f| \le W$, and returns the corresponding signal in the time domain.
Additionally, we let $\calT_N$ denote an operator that takes an infinite-length discrete-time signal and zeros out all entries outside the index range $\{0,1,\dots,N-1\}$ (but still returns an infinite-length signal).
With these definitions, the DPSS's are defined in~\cite{Slepi_ProlateV} as follows.
\begin{definition}
Given any $N \in \N$ and $W \in (0,\tfrac{1}{2})$, the DPSS's are a collection of $N$ real-valued discrete-time sequences
$s_{N,W}^{(0)}, s_{N,W}^{(1)}, \dots, s_{N,W}^{(N-1)}$
that, along with the corresponding scalar eigenvalues $1 > \lambda_{N,W}^{(0)} > \lambda_{N,W}^{(1)} > \cdots > \lambda_{N,W}^{(N-1)} > 0,$
satisfy
\begin{equation}
\calB_W(\calT_N (s_{N,W}^{(\ell)})) = \lambda_{N,W}^{(\ell)} s_{N,W}^{(\ell)}
\label{eq:dpsseigmain}
\end{equation}
for all $\ell \in \{0,1,\dots,N-1\}$.
The DPSS's are normalized so that
\begin{equation}
\label{eq:dpssnorm}
\| \calT_N(s_{N,W}^{(\ell)}) \|_2 = 1
\end{equation}
for all $\ell \in \{0,1,\dots,N-1\}$.
\end{definition}

One of the central contributions of~\cite{Slepi_ProlateV} was to examine the behavior of the eigenvalues $\lambda_{N,W}^{(0)}, \dots, \lambda_{N,W}^{(N-1)}$. In particular,~\cite{Slepi_ProlateV} shows that the first $2NW$ eigenvalues tend to cluster extremely close to $1$, while the remaining eigenvalues tend to cluster similarly close to $0$. This is made more precise in the following lemma from~\cite{Slepi_ProlateV}.
\begin{lemma} \label{lem:SlepEig} Suppose that $W$ is fixed, and let $\rho \in (0,1)$ be fixed. Then there exist constants $C_0$ and $N_0$ (which may depend on $W$ and $\rho$) such that
\begin{equation}
\lambda_{N,W}^{(\ell)} \ge 1 - e^{- C_0 N} ~~ \mathrm{for~all}~ \ell \le 2NW(1-\rho) ~\mathrm{and~all}~N \ge N_0.
\label{eq:eig2nbminus}
\end{equation}
Similarly, for any fixed $\rho \in (0,\frac{1}{2W}-1)$ there exist constants $C_1$ and $N_1$ (which may depend on $W$ and $\rho$) such that
\begin{equation}
\lambda_{N,W}^{(\ell)} \le e^{- C_1 N} ~~ \mathrm{for~all}~ \ell \ge 2NW(1+\rho) ~\mathrm{and~all}~N \ge N_1.
\label{eq:eig2nbplus}
\end{equation}
\end{lemma}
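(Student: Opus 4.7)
The plan is to work with the $N\times N$ prolate matrix $\bmM$ whose entries are $M_{m,n}=\sin(2\pi W(m-n))/(\pi(m-n))$ for $m\ne n$ and $M_{n,n}=2W$; this is the matrix representation of $\calT_N\calB_W\calT_N$ restricted to coordinates $\{0,\ldots,N-1\}$, so its decreasingly ordered spectrum is exactly $\{\lambda_{N,W}^{(\ell)}\}_{\ell=0}^{N-1}$. Following the comparison previewed in the introduction, I would pit $\bmM$ against the circulant orthogonal projector $\bmX$ onto the span of the low-frequency DFT vectors $\{e^{\j 2\pi k n/N}:|k|\le NW\}$. The matrix $\bmX$ has exactly $K=\#\{k\in\Z:|k|\le NW\}\approx 2NW$ unit eigenvalues and $N-K$ zero eigenvalues, and is the circulant relative of the Toeplitz matrix $\bmM$, built from the Dirichlet kernel rather than from a truncated sinc.

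The core technical step would be to show that for each $\rho>0$ there is a decomposition
\[
\bmM-\bmX=\bmL+\bmE,\qquad \rank(\bmL)\le 2NW\rho,\qquad \|\bmE\|_2\le e^{-C(W,\rho)N},
\]
valid for all $N$ large. Intuitively $\bmM-\bmX$ only sees the edge discrepancy between a Toeplitz matrix and its circulant cousin generated by the same sequence: its entries concentrate near the two corners and decay like the sinc tail away from them, and a modest rank-$r$ approximation of those corners, e.g.\ via a Chebyshev or exponential-sum expansion of the truncated sinc on the relevant interval, should leave a residual that is exponentially small in $N$ with a rate depending on $\rho$ through the truncation budget.

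Granting the decomposition, the lemma follows from standard Weyl-type inequalities for Hermitian perturbations. Writing $r=\rank(\bmL)$ and using that a rank-$r$ perturbation shifts each eigenvalue index by at most $r$, one obtains $\lambda^{(\ell+r)}(\bmX)-\|\bmE\|_2\le \lambda_{N,W}^{(\ell)}\le \lambda^{(\ell-r)}(\bmX)+\|\bmE\|_2$. For $\ell\le 2NW(1-\rho)$ and the choice $r\le 2NW\rho$, one has $\ell+r\le K$ so $\lambda^{(\ell+r)}(\bmX)=1$, giving $\lambda_{N,W}^{(\ell)}\ge 1-e^{-C_0 N}$; for $\ell\ge 2NW(1+\rho)$ one has $\ell-r\ge K$ so $\lambda^{(\ell-r)}(\bmX)=0$, giving $\lambda_{N,W}^{(\ell)}\le e^{-C_1 N}$. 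Both $C_0,C_1$ and the thresholds $N_0,N_1$ then come from $C(W,\rho)$ and the $N$ large enough for the decomposition to hold.

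The hard part will be actually producing the low-rank-plus-exponentially-small decomposition with $\rank(\bmL)$ no larger than $2NW\rho$ and $\|\bmE\|_2$ truly exponential in $N$. Naive trace arguments such as $\trace(\bmM-\bmM^2)=O(\log N)$ only force $O(\log N/\epsilon)$ eigenvalues to lie outside $[0,\epsilon]\cup[1-\epsilon,1]$ for any fixed $\epsilon$; that is enough to control the \emph{number} of transition-band eigenvalues, but far too weak to place the $\ell$-th eigenvalue exponentially close to $0$ or $1$ at a prescribed location. The exponential rate has to come from a genuine analytic estimate on the corner structure of $\bmM-\bmX$, or equivalently from a careful transfer of Slepian's continuous PSWF eigenvalue asymptotics via the Nyquist correspondence between bandlimited analog signals of bandwidth $W<\tfrac12$ and $\ell^2(\Z)$ sequences. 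Once that quantitative compression of $\bmM-\bmX$ is in hand, the Weyl step above closes the argument.
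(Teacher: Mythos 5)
The paper does not prove Lemma~\ref{lem:SlepEig}: it is quoted as a known result from Slepian's 1978 paper \cite{Slepi_ProlateV}, which is where the proof actually lives (via integral-equation and WKB-style asymptotics for the PSWF/DPSS eigenvalues). So there is no ``paper proof'' to compare against, and your task was effectively to re-derive a classical deep result from scratch.

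Your Weyl/min--max step is correct and is precisely the mechanism the paper uses later for Corollary~\ref{cor:transition}: if $\mB_{N,W} = \mF_{N,W}\mF_{N,W}^* + \mL + \mE$ with $\rank(\mL)=r$ and $\|\mE\|\le\epsilon$, then the eigenvalues of $\mB_{N,W}$ outside a window of width $r$ around $2NW$ are within $\epsilon$ of $\{0,1\}$. The gap is the ``core technical step'' you flag yourself: producing such a decomposition with $r\le 2NW\rho$ and $\|\mE\|\le e^{-CN}$. That claim is almost certainly false, and the paper's own Theorem~\ref{thm:prolateFFTLR} tells you why. The dominant piece of $\mB_{N,W}-\mF_{N,W}\mF_{N,W}^*$ is (up to unitary conjugation and flips) the Hilbert matrix $\mH$, whose singular values decay like $\exp(-\pi^2 k/\log(8N))$ --- this is exactly the content of Lemma~\ref{lem:Lyapunov} as applied in the proof, and the $\log N$ in the exponent is not an artifact of the construction but reflects the genuine spectral decay of $\mH$. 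Therefore to drive $\|\mE\|$ down to $e^{-CN}$ you need rank $r=\Theta(N\log N)$, which is asymptotically larger than the $2NW\rho=\Theta(N)$ budget your Weyl step requires. Put differently: plugging $\epsilon=e^{-CN}$ into Theorem~\ref{thm:prolateFFTLR} gives $r_1 = O(N\log N)$, so the window that the Weyl argument can control has width growing like $N\log N$, and the resulting statement is vacuous (it would only say eigenvalues past index $\sim N\log N>N$ are small). The decomposition route, as a matter of principle, proves only that the transition band is $O(\log N\cdot\log(1/\epsilon))$ wide, which is the paper's Corollary~\ref{cor:transition}; it does not and cannot place the $\ell$-th eigenvalue exponentially close to $\{0,1\}$ at linear distance $\rho\cdot 2NW$ from the edge. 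That stronger exponential clustering is an analytic fact about the prolate kernel itself, and needs Slepian's original asymptotic machinery (or an equivalent), not a Toeplitz-vs-circulant comparison. You diagnose this correctly in your closing paragraph, but the conclusion is that the proposed route is not merely incomplete --- the claimed decomposition is quantitatively too strong and does not hold, so the argument cannot be repaired by filling in the sketch as written.
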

This tells us that the range of the operator $\calB_W \calT_N$ has an effective dimension of $\approx 2NW$. Moreover, with only a few exceptions near the ``transition region'' at $\ell \approx 2NW$, we can reasonably approximate the eigenvalues $\lambda_{N,W}^{(\ell)}$ to be either 1 or 0. This will play a central role throughout our analysis.

Finally, we also note that while each DPSS actually has infinite support in time, several very useful properties hold for the collection of signals one obtains by time-limiting the DPSS's to the index range $n = 0,1,\dots,N-1$. First, it can be shown that~\cite{Slepi_ProlateV}
\begin{equation}
\| \calB_W( \calT_N (s_{N,W}^{(\ell)})) \|_2 = \sqrt{\lambda_{N,W}^{(\ell)}}.
\label{eq:apbl}
\end{equation}
Comparing~\eqref{eq:dpssnorm} with~\eqref{eq:apbl}, we see that for values of $\ell$ where $\lambda_{N,W}^{(\ell)} \approx 1$, nearly all of the energy in $\calT_N (s_{N,W}^{(\ell)})$ is contained in the frequencies $|f| \le W$.
%
While by construction the DTFT of any DPSS is perfectly bandlimited, the DTFT of the corresponding time-limited DPSS will only be concentrated in the bandwidth of interest for the first $\approx 2NW$ DPSS's.  As a result, we will frequently be primarily interested in roughly the first $2NW$ DPSS's.
%
%
Second, the time-limited DPSS's are orthogonal~\cite{Slepi_ProlateV} so that for any $\ell,\ell' \in \{0,1,\dots,N-1\}$ with $\ell \neq \ell'$,
\begin{equation}
\label{eq:dpssorth}
 \langle \calT_N(s_{N,W}^{(\ell)}), \calT_N(s_{N,W}^{(\ell')}) \rangle  = 0.
\end{equation}
Finally, like the DPSS's, the time-limited DPSS's have a special eigenvalue relationship with the time-limiting and bandlimiting operators.
In particular, if we apply the operator $\calT_N$ to both sides of (\ref{eq:dpsseigmain}), we see that the sequences $\calT_N (s_{N,W}^{(\ell)})$ are actually eigenfunctions of the two-step procedure in which one first bandlimits a sequence and then time-limits the sequence.

These properties, together with the fact that our focus is primarily on providing computational tools for finite-length vectors, motivate our definition of the Slepian basis to be the restriction of the (time-limited) DPSS's to the index range $n = 0,1,\dots,N-1$ (discarding the zeros outside this range).
\begin{definition}
Given any $N \in \N$ and $W \in (0,\tfrac{1}{2})$, the Slepian basis is given by the vectors $\vs_{N,W}^{(0)}, \vs_{N,W}^{(1)}, \dots, \vs_{N,W}^{(N-1)} \in \real^N$ which are defined by restricting the time-limited DPSS's to the index range $n = 0,1,\dots,N-1$:
$$
\vs_{N,W}^{(\ell)}[n] := \calT_N(s_{N,W}^{(\ell)})[n] = s_{N,W}^{(\ell)}[n]
$$
for all $\ell,n \in \{0,1,\dots,N-1\}$.  For simplicity, we will often use the notation $\mS_{N,W}$ to denote the $N \times N$ matrix given by
\[
\mS_{N,W} = \begin{bmatrix} \vs_{N,W}^{(0)} & \cdots & \vs_{N,W}^{(N-1)}\end{bmatrix}.
\]
\end{definition}

Observe that combining~\eqref{eq:dpssnorm} and~\eqref{eq:dpssorth}, it follows that $\mS_{N,W}$ does indeed form an orthonormal basis for $\complex^N$ (or for $\real^N$).
However, following from our discussion above, the partial Slepian basis constructed using just the first $\approx 2NW $ basis elements will play a special role and can be shown to be remarkably effective for capturing the energy in a length-$N$ window of samples of a bandlimited signal (see~\cite{DavenportWakin2012CSDPSS} for further discussion). In such situations, we will also use the notation $\mS_K$ to denote the first $K$ columns of $\mS_{N,W}$ (where $N$ and $W$ are clear from the context and typically $K \approx 2NW$).

\subsection{The Slepian basis, the Fourier basis, and the prolate matrix}
\label{ssec:SlepFourProl}

In our discussion above we derived the Slepian basis by following the same approach as in~\cite{Slepi_ProlateV} and considering the time-limitations of the eigenfunctions of the operator given by $\calB_W \calT_N$. It is easy to show that an alternative way to derive $\mS_{N,W}$ is to consider the eigenvectors of the $N \times N$ {\em prolate matrix} $\mB_{N,W}$~\cite{Varh1993ProlateMatrix}, which is the matrix with entries given by
\begin{equation} \label{eq:Prolate matrix}
\mB_{N,W}[m,n]: = \frac{\sin2\pi W(m-n)}{\pi(m-n)}
\end{equation}
for all $m,n\in\{0,1,\ldots,N-1\}$.  Indeed, $\mB_{N,W}$ can be understood as the finite truncation of the infinite matrix representation of  $\calB_W \calT_N$. Thus, $\mS_{N,W}$ contains the eigenvectors of $\mB_{N,W}$ and we can write $\mB_{N,W}$ as
\[
\mB_{N,W} = \mS_{N,W} \mLambda_{N,W} \mS_{N,W}^*
 \]
where $\mLambda_{N,W}$ is an $N \times N$ diagonal matrix with the eigenvalues $\lambda_{N,W}^{(0)}, \ldots, \lambda_{N,W}^{(N-1)},$ along the main diagonal (sorted in descending order).

Our primary goal is to develop fast algorithms for working with $\mS_{N,W}$ (or $\mB_{N,W}$, which also arises in many practical applications, as detailed in Section~\ref{ssec:apps} below).  Towards this end, we will begin by examining the relationship between $\mB_{N,W}$ and the matrix obtained by projecting onto the lowest $2NW$ Fourier coefficients.  To be more precise, for any $f \in [-\frac12, \frac12]$ we will let
\[
\ve_f:= \left[\begin{array}{c}e^{j2\pi f0}\\e^{j2\pi f1}\\\vdots\\e^{j2\pi f(N-1)} \end{array}\right]
\]
denote a length-$N$ vector of samples from a discrete-time complex exponential signal with digital frequency $f$. We then define $W'$ such that $2NW'$ is the nearest odd integer to $W$, and we let $\mF_{N,W}$ denote the partial Fourier matrix with the lowest $2NW'$ frequency DFT vectors of length $N$, i.e.,
\begin{equation}\label{eq:PartialFourierMatrix}
\mF_{N,W} =  \dfrac{1}{\sqrt{N}}\begin{bmatrix}  \ve_{ -(2NW'-1)/2N} & \cdots & \ve_{(2NW'-1)/2N} \end{bmatrix}.
\end{equation}
Note that the projection onto the span of $\mF_{N,W}$ is given by the matrix $\mF_{N,W} \mF_{N,W}^*$, which has entries given by
\begin{equation} \label{eq:FFstar}
[\mF_{N,W} \mF_{N,W}^*][m,n] = \dfrac{1}{N}\sum_{k = -NW' +\frac12}^{NW'-\frac12} e^{j 2\pi (m-n)k/N} = \frac{\sin(\pi(2NW')\tfrac{m-n}{N})}{N\sin(\pi\tfrac{m-n}{N})} = \frac{\sin(2\pi W' (m-n))}{N\sin(\pi\tfrac{m-n}{N})}
\end{equation}
for $m,n=0, \ldots, N-1$. Comparing~\eqref{eq:Prolate matrix} with~\eqref{eq:FFstar} we see that $\mB_{N,W}$ and $\mF_{N,W} \mF_{N,W}^*$ share a somewhat similar structure, where $\mB_{N,W}$ is a Toeplitz matrix with rows (or columns) given by the sinc function, whereas $\mF_{N,W} \mF_{N,W}^*$ is a circulant matrix with rows (or columns) given by the digital sinc or Dirichlet function. In Theorem~\ref{thm:prolateFFTLR}, which is proven in Section~\ref{sec:proofProlateFFTLR}, we show that up to a small approximation error $\eps$, the difference between these two matrices has a rank of $O(\log N\log\tfrac{1}{\eps})$.

\begin{thm} \label{thm:prolateFFTLR}
Let $N \in \N$ and $W \in (0, \tfrac{1}{2})$ be given. Then for any $\eps \in (0,\tfrac{1}{2})$, there exist $N \times r_1$ matrices $\mL_1$,$\mL_2$ and an $N \times N$ matrix $\mE_1$ such that
\[
\mB_{N,W} = \mF_{N,W} \mF_{N,W}^* + \mL_1\mL_2^* + \mE_1,
\]
where
\[
r_1 \le \left( \frac{4}{\pi^2} \log(8N) + 6 \right) \log \left( \frac{15}{\eps} \right) ~~\text{and}~~ \|\mE_1\| \le \eps.
\]
\end{thm}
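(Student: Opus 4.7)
The plan is to split $\mB_{N,W} - \mF_{N,W}\mF_{N,W}^*$ into two Toeplitz matrices --- a ``short-interval'' integral and an ``aliasing'' term --- each admitting a low-rank approximation by a tailored argument.

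My first step is to introduce the auxiliary prolate matrix $\mB_{N,W'}$ and write
\[
\mB_{N,W} - \mF_{N,W}\mF_{N,W}^* = (\mB_{N,W} - \mB_{N,W'}) + (\mB_{N,W'} - \mF_{N,W}\mF_{N,W}^*).
\]
Applying the Poisson summation formula with period $N$ to the sinc kernel $b'(t) = \sin(2\pi W' t)/(\pi t)$ (whose Fourier transform is $\mathbf{1}_{[-W',W']}$) and using that $2NW'$ is an odd integer produces the entrywise identity $[\mB_{N,W'} - \mF_{N,W}\mF_{N,W}^*][m,n] = -\sum_{l\neq 0} b'(lN + m - n)$. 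Since $2NW'$ is odd one has $\sin(2\pi W'(lN + t)) = (-1)^l \sin(2\pi W' t)$, and combining with the partial-fraction identity $\sum_{l\in\Z}(-1)^l/(l+a) = \pi/\sin(\pi a)$ collapses the aliasing sum to $\sin(2\pi W'(m-n)) \cdot u(m-n)$ with $u(t) = \tfrac{1}{\pi t} - \tfrac{1}{N\sin(\pi t/N)}$.

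For the short-interval piece, I use $\mB_{N,W} = \int_{-W}^W \ve_f \ve_f^* \,df$ together with $|W - W'| \le 1/(2N)$ to write $\mB_{N,W} - \mB_{N,W'} = \pm \int_I \ve_f\ve_f^*\,df$ over an interval set $I \subset [-W,W]$ of total length at most $1/N$. Each scalar entry $e^{j2\pi f(m-n)}$ of the integrand is an entire function of $f$ of exponential type at most $2\pi(N-1)$, so (the exponential type times the interval length being $O(1)$) Gauss-Legendre quadrature on $I$ converges geometrically: $r_a = O(\log(1/\eps))$ nodes suffice for operator-norm error $\le \eps/2$, yielding a rank-$r_a$ approximation. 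For the aliasing piece $\mE$, modulating out the oscillation via the diagonal unitary $\mD = \diag(e^{j 2\pi W' n})_{n=0}^{N-1}$ gives $\mE = \tfrac{1}{2j}(\mD \mU \mD^* - \mD^* \mU \mD)$, where $\mU$ is the real Toeplitz matrix with $\mU[m,n] = u(m-n)$. Under the rescaling $s = t/N$, $u(t) = \tfrac{1}{N}\phi(s)$ with $\phi(s) = 1/(\pi s) - 1/\sin(\pi s)$, an odd function that is analytic on $(-1,1)$ (the apparent singularity at $s=0$ is removable) and whose only true singularities are simple poles at $s = \pm 1$. A hierarchical (HSS/HODLR-type) partition of $\{0,\ldots,N-1\}$ into $O(\log N)$ levels of admissible off-diagonal blocks reduces the task to approximating $\phi$ on sub-intervals bounded (with level-dependent buffers) away from $\pm 1$; on each such sub-interval $\phi$ is analytic and a Chebyshev expansion of degree $O(\log(1/\eps))$ yields a per-block rank-$O(\log(1/\eps))$ approximation accurate to $\eps/\mathrm{polylog}(N)$. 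Summing the per-level contributions gives a rank-$O(\log N \log(1/\eps))$ approximation of $\mU$, hence of $\mE$, within total error $\eps/2$.

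Combining the two pieces yields the decomposition $\mL_1 \mL_2^* + \mE_1$ with $\rank(\mL_1) \le r_a + 2r_b = O(\log N \log(1/\eps))$ and $\|\mE_1\| \le \eps$. The main obstacle is sharpening this soft bound into the explicit constant $(\tfrac{4}{\pi^2}\log(8N)+6)\log(15/\eps)$: this requires careful accounting of (i) the exact Gauss-Legendre error constant on the length-$1/N$ interval, (ii) the optimal Chebyshev degree on each HSS block as a function of its distance to the poles of $\phi$ at $s = \pm 1$, and (iii) the aggregation of per-level errors across the dyadic tree --- in particular, arranging the $1/\sin$ singularities so that the factor $\tfrac{4}{\pi^2}$ (rather than a larger universal constant) is attainable.
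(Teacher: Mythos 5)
Your proposal reaches the same entry-wise identity the paper uses (you derive it via Poisson summation with the ``odd integer $2NW'$'' trick, where the paper does a direct trigonometric expansion) and keeps the same two-term split into a ``short-interval'' piece $\mB_{N,W}-\mB_{N,W'}$ and a Toeplitz ``aliasing'' piece; but the low-rank machinery you use on each is genuinely different. For the short piece you exploit $\mB_{N,W}-\mB_{N,W'}=\pm\int_I \ve_f\ve_f^*\,df$ with $|I|\le 1/N$ and Gauss--Legendre quadrature (exponential type $\cdot$ interval length $=O(1)$), producing an explicit rank-$O(\log(1/\eps))$ sum of rank-one projectors; the paper instead Taylor-expands the sinc entries of $\mB_0$, which gives a polynomial-generated low rank but avoids the rank-one factorization. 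For the Toeplitz piece the paper peels off the two nearest poles of $1/\sin$ as $\tfrac{1}{\pi(m-n\pm N)}$, recognizes them as (flipped) Hilbert matrices, and invokes an ADI/Lyapunov lemma to get rank $O(\log N\log(1/\eps))$ there, then Taylor-expands the now-uniformly-analytic remainder $\mA_1$; you instead propose a hierarchical partition with Chebyshev expansions on admissible blocks. Both routes land at the same $O(\log N\log(1/\eps))$ rank, and the ADI and hierarchical/FMM pictures are in some sense dual.

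The one place your sketch needs tightening is the phrase ``HSS/HODLR-type partition ... summing the per-level contributions gives rank $O(\log N\log\tfrac1\eps)$.'' A \emph{standard} HSS/HODLR recursion on the diagonal retains $2^k$ off-diagonal blocks at level $k$, so aggregating per-block rank $O(\log\tfrac1\eps)$ over all blocks gives $O(N\log\tfrac1\eps)$, not $O(\log N\log\tfrac1\eps)$; and more importantly, HODLR gives a block-low-rank matrix, not a globally low-rank $\mL_1\mL_2^*$. What actually works --- and what your ``level-dependent buffers away from $\pm1$'' seems to be reaching for --- is a \emph{corner-targeted} recursion: the poles of $\phi(s)=1/(\pi s)-1/\sin(\pi s)$ at $s=\pm1$ correspond to the two anti-diagonal corners $(m,n)\approx(N{-}1,0)$ and $(0,N{-}1)$, so at each dyadic level one recurses into only the one sub-block adjacent to each corner, retaining a fixed number of admissible sub-blocks per level. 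That yields $O(\log N)$ admissible blocks total, with disjoint supports, so their ranks add and the globally low-rank bound follows. You also correctly flag that the paper's explicit constants $\left(\tfrac{4}{\pi^2}\log(8N)+6\right)\log\tfrac{15}{\eps}$ come from bookkeeping (here driven by the sharp elliptic-integral estimate in the ADI lemma and explicit alternating-series tail bounds) that a quadrature-plus-hierarchy route would need to redo from scratch.
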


We also note that the proof of Theorem~\ref{thm:prolateFFTLR} provides an explicit construction such matrices $\mL_1$ and $\mL_2$, which could be of use in practice.

An important consequence of Theorem~\ref{thm:prolateFFTLR} which will be useful to us, and which is also of independent interest, is that it can be used to establish a nonasymptotic bound on the number of eigenvalues $\lambda_{N,W}^{(\ell)}$ of $\mB_{N,W}$ in the ``transition region'' between $\eps$ and $1-\eps$. In particular, Lemma~\ref{lem:SlepEig} tells as that in the limit as $N \rightarrow \infty$ we will have that the first $\approx 2NW$ eigenvalues will approach $1$ while the last $\approx N(1-2W)$ eigenvalues will approach $0$. However, this does not address precisely how many eigenvalues we can expect to find between $\eps$ and $1-\eps$.

In \cite{Slepi_ProlateV}, it is shown that for any $b \in \R$, if $k = \lfloor 2WN + \tfrac{b}{\pi}\log N\rfloor$, then $\lambda_{N,W}^{(k)} \to (1+e^{\pi b})^{-1}$ as $N \to \infty$. By setting $b = \tfrac{1}{\pi}\log(\tfrac{1}{\eps}-1)$, we get $\lambda_{N,W}^{(k)} \to \eps$. Similarly, by setting $b = -\tfrac{1}{\pi}\log(\tfrac{1}{\eps}-1)$, we get $\lambda_{N,W}^{(k)} \to 1-\eps$. Thus, for fixed $W$ and $\eps$, we get the following asymptotic result:
\begin{equation} \label{eq:asymptotic}
\#\{\ell : \eps \le \lambda_{N,W}^{(\ell)} \le 1-\eps\} \sim \frac{2}{\pi^2}\log N \log\left(\frac{1}{\eps}-1\right).
\end{equation}

In Figure~\ref{fig:EigenvalueGap} on the left, we show a numerical comparison of  $\#\{\ell : \eps \le \lambda_{N,W}^{(\ell)} \le 1-\eps\}$ and $\frac{2}{\pi^2}\log N \log\left(\frac{1}{\eps}-1\right)$ versus $N$ for a fixed value of $W = \tfrac{1}{4}$. The size of the eigenvalue gap appears to grow linearly with $\log N$ and linearly with $\log(\frac{1}{\eps}-1)$ as expected.

 In Figure~\ref{fig:EigenvalueGap} on the right, we show a plot of $\#\{\ell : \eps \le \lambda_{N,W}^{(\ell)} \le 1-\eps\}$ versus $W$ for a fixed value of $N = 2^{16}$. Note that we did not include the range $\tfrac{1}{4} < W < \tfrac{1}{2}$ due to the fact that the DPSS eigenvalues satisfy $\lambda_{N,1/2-W}^{(\ell)} = 1-\lambda_{N,W}^{(N-1-\ell)}$ (equation (13) in \cite{Slepi_ProlateV}), and thus, $\#\{\ell : \eps < \lambda_{N,1/2-W}^{(\ell)} < 1-\eps\} = \#\{\ell : \eps < \lambda_{N,W}^{(\ell)} < 1-\eps\}$. Based on this plot, the size of the eigenvalue gap appears to grow roughly linearly with respect to $\log W$ over the range $0 < W \le \tfrac{1}{4}$. None of the theoretical results capture how the size of the eigenvalue gap depends on $W$. However, in most applications $W$ is a fixed constant that is not too small, and so, the dependence with respect to $W$ is of little consequence. 

\begin{figure}
   \centering
  \includegraphics[scale = 0.38]{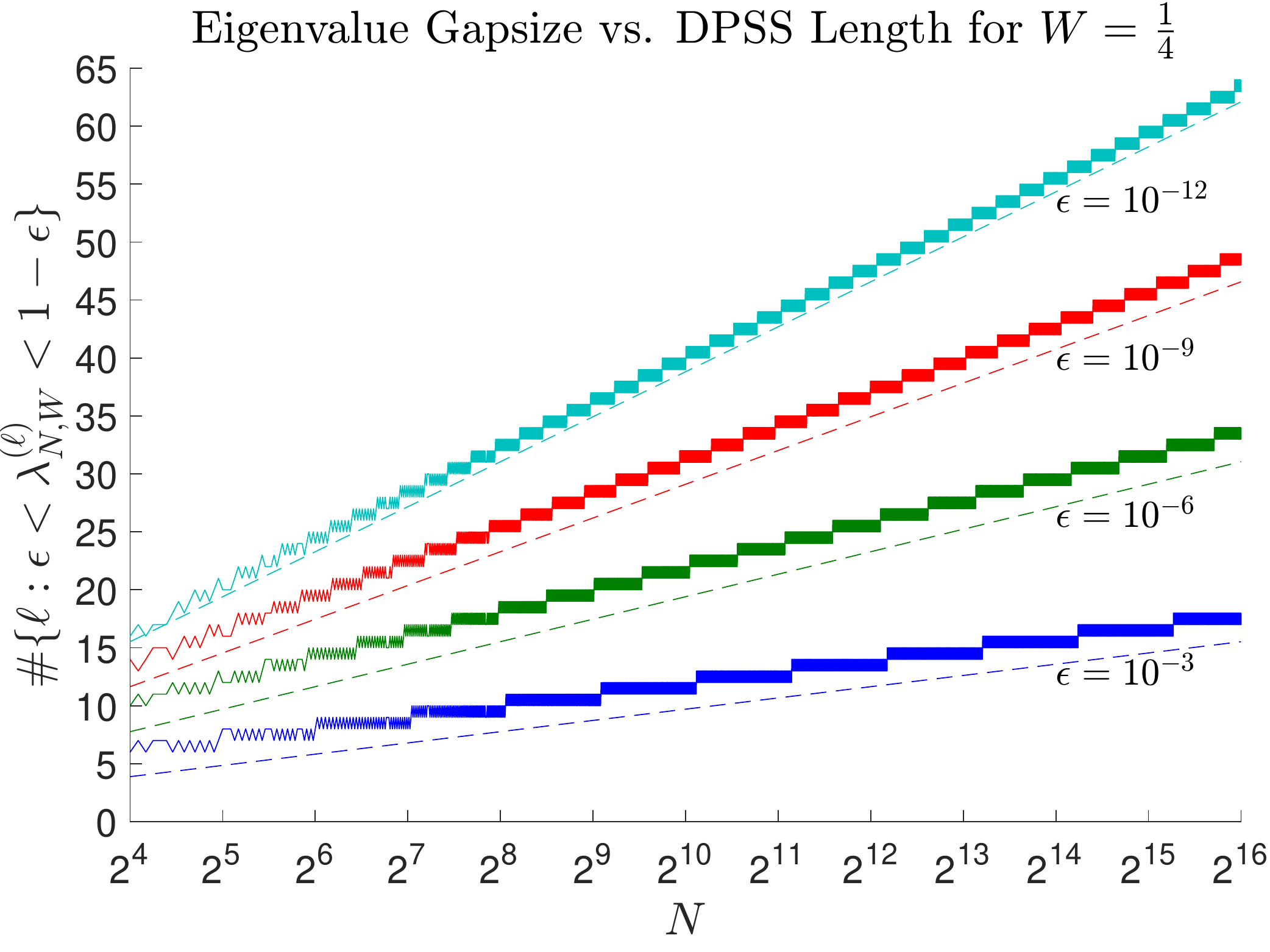} \includegraphics[scale = 0.38]{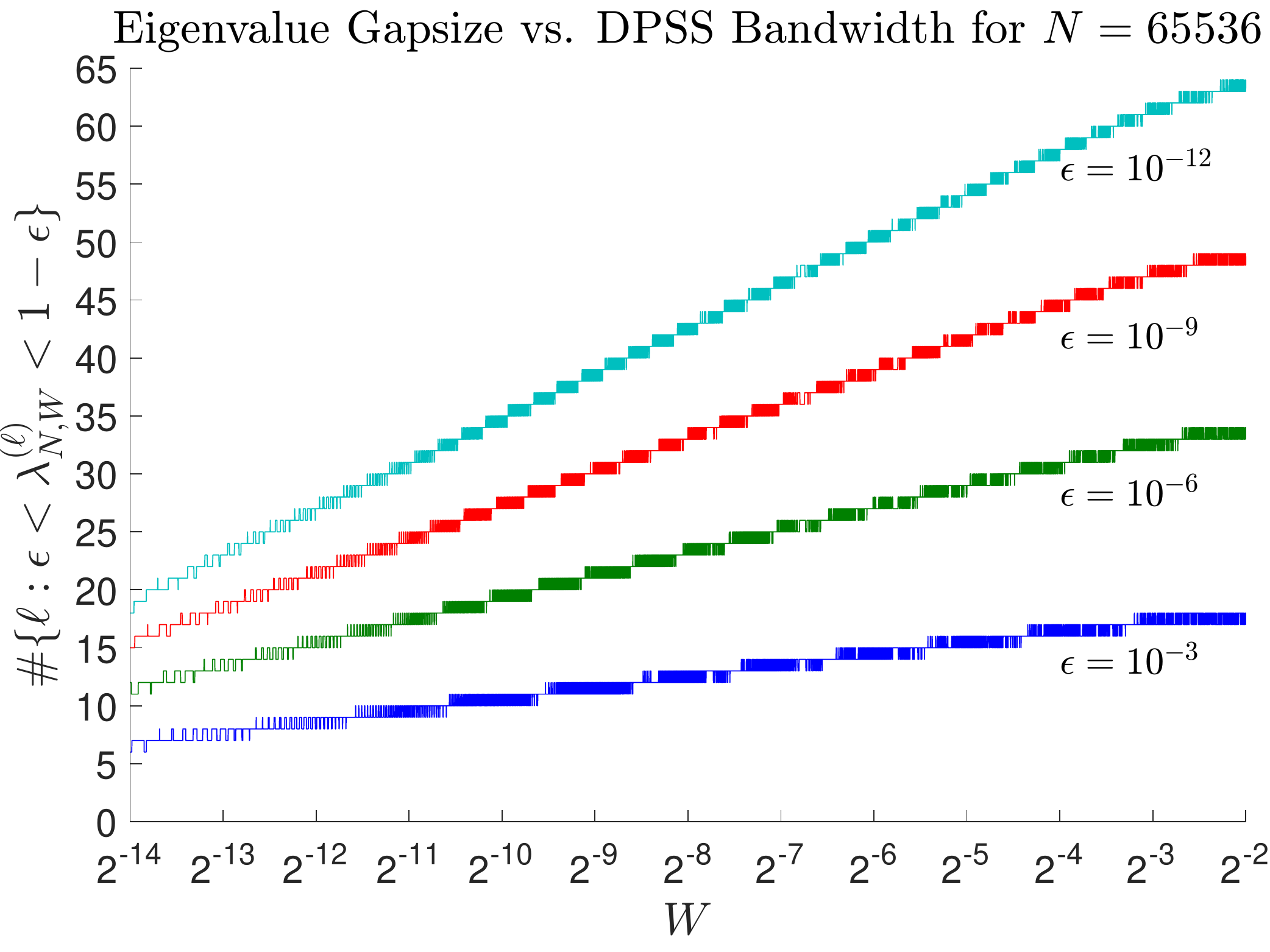}
   \caption{\small \sl (Left) The solid lines represent the size of eigenvalue gap for $2^4 \le N \le 2^{16}$, $W = \frac{1}{4}$, and $\eps = 10^{-3}, 10^{-6}, 10^{-9}, 10^{-12}$. The dashed lines represent the asymptotic result in~\eqref{eq:asymptotic}. (Right) The solid lines represent the size of eigenvalue gap for $N = 2^{16}$, $2^{-14} \le W \le \tfrac{1}{4}$, and $\eps = 10^{-3}, 10^{-6}, 10^{-9}, 10^{-12}$. 
   \label{fig:EigenvalueGap}}
\end{figure}

A nonasymptotic bound on the width of this transition region is given in~\cite{ZhuWakin2015MDPSS}, which shows that for any $N \in \N$, $W \in (0, \tfrac{1}{2})$, and $\eps \in(0,\tfrac{1}{2})$,
$$
\#\{\ell : \eps \le \lambda_{N,W}^{(\ell)} \le 1-\eps\} \le \frac{\tfrac{2}{\pi^2}\log(N-1)+\tfrac{2}{\pi^2}\tfrac{2N-1}{N-1}}{\eps(1-\eps)}.
$$
This bound correctly highlights the logarithmic dependence on $N$, but can be quite poor when $\eps$ is very small ($O(1/\eps)$ as opposed to the $O(\log(1/\eps))$ dependence in the asymptotic result). In the following corollary of Theorem~\ref{thm:prolateFFTLR}, we significantly sharpen this bound in terms of its dependence on $\eps$ to within a constant factor of the optimal asymptotic result. The intuition behind this result is that Theorem~\ref{thm:prolateFFTLR} demonstrates that $\mB_{N,W}$ can be approximated as $\mF_{N,W} \mF_{N,W}^*$ (a matrix whose eigenvalues are all either equal to 1 or 0) plus a low-rank correction, and the rank of this correction limits the number of possible eigenvalues in the transition region.

\begin{cor}\label{cor:transition}
Let $N \in \N$ and $W \in (0, \tfrac{1}{2})$ be given. Then for any $\eps \in (0,\tfrac{1}{2})$,
\[
\#\{\ell : \eps < \lambda_{N,W}^{(\ell)} < 1-\eps\} \le \left(\frac{8}{\pi^2}\log(8N)+12\right)\log\left(\frac{15}{\eps}\right).
\]
\end{cor}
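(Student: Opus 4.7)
The plan is to combine Theorem~\ref{thm:prolateFFTLR} with two standard consequences of Weyl's inequality for Hermitian matrices: a norm-perturbation bound $|\lambda_i(\mA + \mE) - \lambda_i(\mA)| \le \|\mE\|$, and a rank-perturbation bound $\lambda_{i+r}(\mA) \le \lambda_i(\mA + \mL) \le \lambda_{i-r}(\mA)$ whenever $\mL$ is Hermitian with $\rank(\mL) \le r$ (with the usual convention $\lambda_j(\mA) = +\infty$ for $j \le 0$ and $\lambda_j(\mA) = -\infty$ for $j > N$). The key observation is that $\mP := \mF_{N,W}\mF_{N,W}^*$ is an orthogonal projector onto a $k$-dimensional subspace with $k := 2NW'$, so its spectrum is just $\{0,1\}$; any eigenvalue of $\mB_{N,W}$ landing in the open transition region $(\eps, 1-\eps)$ therefore has to be paid for by either the rank or the operator norm of $\mB_{N,W} - \mP$.

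First I would invoke Theorem~\ref{thm:prolateFFTLR} to write $\mB_{N,W} = \mP + \mL_1\mL_2^* + \mE_1$ with $\rank(\mL_1\mL_2^*) \le r_1$ and $\|\mE_1\| \le \eps$. Because $\mB_{N,W}$ and $\mP$ are Hermitian, so is $\mG := \mL_1\mL_2^* + \mE_1$, even though $\mL_1\mL_2^*$ individually need not be. Weyl's inequality for singular values gives
\[
\sigma_{r_1+1}(\mG) \;\le\; \sigma_{r_1+1}(\mL_1\mL_2^*) + \sigma_1(\mE_1) \;\le\; 0 + \eps,
\]
so truncating the Hermitian eigendecomposition of $\mG$ at its $r_1$ largest-magnitude eigenvalues yields a splitting $\mG = \tilde{\mL} + \tilde{\mE}$ into Hermitian pieces with $\rank(\tilde{\mL}) \le r_1$ and $\|\tilde{\mE}\| \le \eps$. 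This symmetrization is the only slightly delicate step in the argument; without it, the rank bound on $\mL_1\mL_2^*$ cannot be fed directly into the Hermitian rank-perturbation inequality.

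Chaining the two Weyl bounds through the intermediate Hermitian matrix $\mP + \tilde{\mL}$ then gives, for every index $i$,
\[
\lambda_{i+r_1}(\mP) - \eps \;\le\; \lambda_i(\mB_{N,W}) \;\le\; \lambda_{i-r_1}(\mP) + \eps.
\]
When $i \le k - r_1$ the left-hand side equals $1-\eps$, forcing $\lambda_i(\mB_{N,W}) \ge 1-\eps$; when $i \ge k + r_1 + 1$ the right-hand side equals $\eps$, forcing $\lambda_i(\mB_{N,W}) \le \eps$. In either case $\lambda_i(\mB_{N,W})$ lies outside $(\eps,1-\eps)$, so only the $2r_1$ indices with $k - r_1 < i \le k + r_1$ can contribute. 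Translating back to the $\ell = i-1$ indexing of the corollary and substituting the bound on $r_1$ from Theorem~\ref{thm:prolateFFTLR} yields
\[
\#\{\ell : \eps < \lambda_{N,W}^{(\ell)} < 1-\eps\} \;\le\; 2r_1 \;\le\; \left(\tfrac{8}{\pi^2}\log(8N)+12\right)\log\left(\tfrac{15}{\eps}\right),
\]
as claimed.
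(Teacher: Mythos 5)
Your proof is correct but takes a genuinely different route from the paper's. The paper proves an auxiliary lemma directly from the Courant--Fischer--Weyl min--max theorem: it defines $\setS_1 = \Null(\mU\mU^*)\cap\Null(\mL)$ and $\setS_2 = \Null(\mId-\mU\mU^*)\cap\Null(\mL)$, bounds the quadratic form $\vx^*\mA\vx$ on each, and counts dimensions of null-space intersections to conclude $\#\{\ell : \eps<\lambda^{(\ell)}<1-\eps\} \le 2r$. Notably, that argument only ever uses that $\vx\in\Null(\mL)$ kills $\vx^*\mL\vx$, so it never needs $\mL$ to be Hermitian; the non-Hermitian low-rank factor $\mL_1\mL_2^*$ from Theorem~\ref{thm:prolateFFTLR} can be fed in unmodified. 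You instead invoke the black-box Weyl perturbation inequalities (the rank-$r$ interlacing bound $\lambda_{i+r}(\mA)\le\lambda_i(\mA+\mL)\le\lambda_{i-r}(\mA)$ and the $\|\mE\|$-shift bound), both of which demand Hermitian perturbants; this forces the symmetrization step you describe, where you resplit the Hermitian matrix $\mG = \mB_{N,W}-\mP$ into its $r_1$ largest-magnitude spectral components $\tilde{\mL}$ and a tail $\tilde{\mE}$ with $\|\tilde{\mE}\|\le\sigma_{r_1+1}(\mG)\le\eps$. You identify and handle this obstruction correctly, and the chained inequalities give exactly $2r_1$ as in the paper. The tradeoff: the paper's route is arguably more elementary and avoids the symmetrization entirely, whereas your route makes transparent that the result is ``just'' classical eigenvalue perturbation theory applied to a projection plus noise, and would generalize verbatim to any Hermitian matrix close to an orthoprojector in the rank-plus-norm sense.
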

This result is analogous to the main result of~\cite{Israel2015EigenvalueDisTFLocalization}, which recently established similar nonasymptotic results concerning the eigenvalue distribution of the {\em continuous} time-frequency localization operator. Since we are dealing with discrete version of the time-frequency localization operator, we are able to use different techniques to obtain a much tighter bound on the number of possible eigenvalues in the transition region.

Finally, we describe a few additional consequences of these results. Recall that $\mB_{N,W} = \mS_{N,W} \mLambda_{N,W} \mS_{N,W}^*$. From Corollary~\ref{cor:transition} we have that the diagonal entries of the matrix $\mLambda_{N,W}$ are mostly very close to 1 or 0, with only a small number of eigenvalues lying in between.  Thus, recalling that $\mS_K$ denotes the $N \times K$ matrix containing the first $K$ elements of the Slepian basis $\mS_{N,W}$, it is reasonable to expect that $\mB_{N,W}$ and $\mS_K \mS_K^*$ (the matrix obtained by setting the top $K$ eigenvalues to 1 and the remainder to 0) should be within a low-rank correction when $K\approx 2NW$.  The following corollary shows that this is indeed the case.

\begin{cor}\label{cor:prolateSlepianLR}
Let $N \in \N$ and $W \in (0, \tfrac{1}{2})$ be given.  For any $\eps \in (0,\tfrac{1}{2})$, fix $K$ to be such that $\lambda_{N,W}^{(K-1)} > \eps$ and $\lambda_{N,W}^{(K)} < 1-\eps$. Then there exist $N \times r_2$ matrices $\mU_1, \mU_2$ and an $N \times N$ matrix $\mE_2$ such that
\[
\mS_K \mS_K^* = \mB_{N,W} + \mU_1\mU_2^* + \mE_2,
\]
where
\[
r_2 \le \left( \frac{8}{\pi^2} \log(8N) + 12 \right) \log \left( \frac{15}{\eps} \right) ~~ \text{and} ~~  \| \mE_2 \| \le \eps.
\]
\end{cor}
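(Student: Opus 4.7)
The plan is to use the eigendecomposition of $\mB_{N,W}$ together with Corollary~\ref{cor:transition}. Starting from $\mB_{N,W}=\mS_{N,W}\mLambda_{N,W}\mS_{N,W}^*$ and the fact that $\mS_{N,W}$ is orthogonal so $\mS_K\mS_K^* = \mS_{N,W}\mathrm{diag}(\underbrace{1,\dots,1}_{K},\underbrace{0,\dots,0}_{N-K})\mS_{N,W}^*$, I would write
\[
\mS_K\mS_K^* - \mB_{N,W} \;=\; \mS_{N,W}\,\mD\,\mS_{N,W}^*,
\]
where $\mD$ is the diagonal matrix with entries $D_{\ell\ell}=1-\lambda_{N,W}^{(\ell)}$ for $\ell<K$ and $D_{\ell\ell}=-\lambda_{N,W}^{(\ell)}$ for $\ell\ge K$. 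The whole proof reduces to understanding the magnitudes of these diagonal entries.

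Next, I would partition the indices into the transition set $\mathcal{T}=\{\ell:\eps<\lambda_{N,W}^{(\ell)}<1-\eps\}$ and its complement. The hypothesis $\lambda_{N,W}^{(K-1)}>\eps$ and $\lambda_{N,W}^{(K)}<1-\eps$ together with the monotonic ordering of the eigenvalues forces, for every $\ell\notin\mathcal{T}$, that $\lambda_{N,W}^{(\ell)}\ge 1-\eps$ whenever $\ell<K$ and $\lambda_{N,W}^{(\ell)}\le\eps$ whenever $\ell\ge K$. Either way $|D_{\ell\ell}|\le\eps$ on $\mathcal{T}^c$, while $|D_{\ell\ell}|<1$ on $\mathcal{T}$. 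Split $\mD=\mD_{\mathcal{T}}+\mD_{\mathcal{T}^c}$, where $\mD_{\mathcal{T}}$ keeps only the diagonal entries indexed by $\mathcal{T}$ and $\mD_{\mathcal{T}^c}$ keeps the rest. Define
\[
\mE_2 \;:=\; \mS_{N,W}\,\mD_{\mathcal{T}^c}\,\mS_{N,W}^*, \qquad \mU_1\mU_2^* \;:=\; \mS_{N,W}\,\mD_{\mathcal{T}}\,\mS_{N,W}^*.
\]

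For the error term, since $\mS_{N,W}$ is orthogonal,
\[
\|\mE_2\|=\|\mD_{\mathcal{T}^c}\|=\max_{\ell\notin\mathcal{T}}|D_{\ell\ell}|\le\eps,
\]
as required. For the low-rank term, only $|\mathcal{T}|$ diagonal entries of $\mD_{\mathcal{T}}$ are nonzero, so its rank is at most $|\mathcal{T}|$; Corollary~\ref{cor:transition} bounds this by $r_2=\bigl(\tfrac{8}{\pi^2}\log(8N)+12\bigr)\log(15/\eps)$. Writing the nonzero part in factored form $\sum_{\ell\in\mathcal{T}} D_{\ell\ell}\,\vs_{N,W}^{(\ell)}(\vs_{N,W}^{(\ell)})^*$, we can take $\mU_1$ to be the $N\times r_2$ matrix whose first $|\mathcal{T}|$ columns are $D_{\ell\ell}\,\vs_{N,W}^{(\ell)}$ (padded with zeros) and $\mU_2$ to be the $N\times r_2$ matrix whose first $|\mathcal{T}|$ columns are $\vs_{N,W}^{(\ell)}$ (padded with zeros), yielding the claimed factorization.

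There is no real obstacle here; the corollary is essentially a rearrangement of Corollary~\ref{cor:transition}. The only subtlety worth checking carefully is that the hypothesis on $K$ combined with the sorted ordering of the $\lambda_{N,W}^{(\ell)}$ genuinely forces $|D_{\ell\ell}|\le\eps$ for every $\ell$ outside the transition region, rather than leaving some stray indices near $K$ uncontrolled. Once that dichotomy is established, the spectral-norm and rank bounds follow immediately from orthogonality of $\mS_{N,W}$.
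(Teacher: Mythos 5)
Your proposal is correct and follows essentially the same route as the paper: decompose $\mS_K\mS_K^* - \mB_{N,W}$ in the shared eigenbasis, separate the transition indices (bounded in number by Corollary~\ref{cor:transition}) from the rest, and observe that the hypothesis on $K$ plus monotonicity forces the complementary diagonal entries to have magnitude at most $\eps$. The paper's Lemma~\ref{lem:prolateSlepianLR} organizes the indices into four sets $\setI_1,\dots,\setI_4$ and symmetrizes the low-rank factors via square roots of $\mId-\mLambda_2$ and $\mLambda_3$, whereas you fold the scalar into one factor directly; these are cosmetic bookkeeping differences and your dichotomy check for $\ell\notin\mathcal{T}$ is exactly the step the paper needs as well.
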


Similarly, consider the rank-$K$ truncated pseudoinverse of $\mB_{N,W}$ where $K \approx 2NW$ (which we will denote by $\mB_{N,W}^{\dagger}$). Since most of the first $K$ eigenvalues of $\mB_{N,W}$ are very close to $1$, most of the first $K$ eigenvalues of $\mB_{N,W}^{\dagger}$ will also be close to $1$. Also, most of the last $N-K$ eigenvalues of $\mB_{N,W}$ are very close to $0$, and by definition the last $N-K$ eigenvalues of $\mB_{N,W}^{\dagger}$ are exactly $0$. Hence, it is reasonable to expect that $\mB_{N,W}$ and $\mB_{N,W}^{\dagger}$ are within a low-rank correction when $K \approx 2NW$. The following corollary shows that this is indeed the case.

\begin{cor}\label{cor:prolatePseudoinverseLR}
Let $N \in \N$ and $W \in (0, \tfrac{1}{2})$ be given.  For any $\eps \in (0,\tfrac{1}{2})$, fix $K$ to be such that $\lambda_{N,W}^{(K-1)} > \eps$ and $\lambda_{N,W}^{(K)} < 1-\eps$, and let $\mB_{N,W}^{\dagger}$ be the rank-$K$ truncated pseudoinverse of $\mB_{N,W}$. Then there exist $N \times r_3$ matrices $\mU_3, \mU_4$ and an $N \times N$ matrix $\mE_3$ such that
\[
\mB_{N,W}^{\dagger} = \mB_{N,W} + \mU_3 \mU_4^* + \mE_3,
\]
where
\[
r_3 \le \left( \frac{8}{\pi^2} \log(8N) + 12 \right) \log \left( \frac{15}{\eps} \right) ~~ \text{and} ~~  \| \mE_3 \| \le 3\eps.
\]
\end{cor}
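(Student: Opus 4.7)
The plan is to diagonalize $\mB_{N,W}^{\dagger} - \mB_{N,W}$ in the Slepian basis and then split the resulting sum according to whether each eigenvalue lies in the transition region from Corollary~\ref{cor:transition}. Starting from $\mB_{N,W} = \mS_{N,W}\mLambda_{N,W}\mS_{N,W}^*$ and the definition of the rank-$K$ truncated pseudoinverse, I would write
\[
\mB_{N,W}^{\dagger} - \mB_{N,W} = \sum_{\ell=0}^{N-1} \mu_\ell \, \vs_{N,W}^{(\ell)}(\vs_{N,W}^{(\ell)})^*,
\]
where $\mu_\ell = \tfrac{1}{\lambda_{N,W}^{(\ell)}} - \lambda_{N,W}^{(\ell)}$ for $\ell < K$ and $\mu_\ell = -\lambda_{N,W}^{(\ell)}$ for $\ell \ge K$.

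Next, I would set $\mathcal{I} = \{\ell : \eps < \lambda_{N,W}^{(\ell)} < 1-\eps\}$ and define the low-rank correction as the restriction of the sum to $\mathcal{I}$. Concretely, take $\mU_3$ to be the matrix whose columns are the vectors $\vs_{N,W}^{(\ell)}$ for $\ell \in \mathcal{I}$, and $\mU_4$ the matrix whose columns are $\mu_\ell \, \vs_{N,W}^{(\ell)}$ for $\ell \in \mathcal{I}$; this representation absorbs the sign change of $\mu_\ell$ across $K$ without any extra work. Corollary~\ref{cor:transition} gives $|\mathcal{I}| \le (\tfrac{8}{\pi^2}\log(8N) + 12)\log(15/\eps) = r_3$, which is exactly the rank bound claimed. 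The residual matrix $\mE_3$ is then the sum over $\ell \notin \mathcal{I}$, and since the Slepian vectors are orthonormal, $\|\mE_3\| = \max_{\ell \notin \mathcal{I}} |\mu_\ell|$.

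The main calculation is bounding $|\mu_\ell|$ outside $\mathcal{I}$. Because the $\lambda_{N,W}^{(\ell)}$ are sorted in decreasing order and the hypothesis forces $\lambda_{N,W}^{(\ell)} > \eps$ for all $\ell < K$ and $\lambda_{N,W}^{(\ell)} < 1-\eps$ for all $\ell \ge K$, any $\ell \notin \mathcal{I}$ with $\ell < K$ must satisfy $\lambda_{N,W}^{(\ell)} \ge 1-\eps$, and any $\ell \notin \mathcal{I}$ with $\ell \ge K$ must satisfy $\lambda_{N,W}^{(\ell)} \le \eps$. In the first case, with $\lambda := \lambda_{N,W}^{(\ell)}$,
\[
|\mu_\ell| = \frac{1-\lambda^2}{\lambda} = (1-\lambda)\left(1+\tfrac{1}{\lambda}\right) \le \eps \cdot \frac{2-\eps}{1-\eps} \le 3\eps,
\]
where the last step uses $\eps \le \tfrac{1}{2}$. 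In the second case, $|\mu_\ell| = \lambda_{N,W}^{(\ell)} \le \eps$. Combining these gives $\|\mE_3\| \le 3\eps$, completing the proof.

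There is no real obstacle here; the only point to be careful about is the bookkeeping in the previous paragraph, where one needs the monotonicity of the sorted eigenvalues together with the defining inequalities for $K$ to rule out an eigenvalue $\ge 1-\eps$ appearing among the indices $\ell \ge K$ (or an eigenvalue $\le \eps$ appearing among $\ell < K$). Once this is set up, the rank bound is immediate from Corollary~\ref{cor:transition} and the norm bound is a one-line computation.
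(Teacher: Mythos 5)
Your proof is correct and takes essentially the same approach as the paper's: diagonalize $\mB_{N,W}^{\dagger} - \mB_{N,W}$ in the Slepian basis, isolate the indices in the transition region $\setI = \{\ell : \eps < \lambda_{N,W}^{(\ell)} < 1-\eps\}$ as the low-rank correction (with the rank bound coming from Corollary~\ref{cor:transition}), and bound the remaining diagonal entries by $3\eps$. The only cosmetic difference is in how the low-rank term is factored: the paper splits the transition indices by whether $\ell < K$ or $\ell \ge K$ and uses symmetric square-root factors (with a sign carried on one block), whereas you put all of $\mu_\ell$ into $\mU_4$ and keep $\mU_3$ unweighted; both yield the same $\mU_3\mU_4^*$, and your bound $(1-\lambda)(1+1/\lambda) \le \eps(2-\eps)/(1-\eps) \le 3\eps$ for $\lambda \ge 1-\eps$, $\eps \le \tfrac12$ matches the paper's computation exactly.
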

A similar decomposition of the pseudoinverse of $\mB_{N,W}$, also based on the sum of the prolate matrix and a low rank update, was presented in \cite{huybrechs2016FastAlgorithms}.  Our result above gives an explicit non-asymptotic bound on the rank of the update required to achieve a certain accuracy.

Also, consider the matrix $\mB_{N,W}^{(\text{tik})} = (\mB_{N,W}^2+\alpha\mId)^{-1}\mB_{N,W}$ where $\alpha > 0$. (Note that this matrix is associated with Tikhonov regularization, i.e. for a given $\vy \in \C^N$, the vector $\vx \in \C^N$ which minimizes $\|\vy-\mB_{N,W}\vx\|_2^2+\alpha\|\vx\|_2^2$ is given by $\vx = (\mB_{N,W}^2+\alpha\mId)^{-1}\mB_{N,W}\vy = \mB_{N,W}^{(\text{tik})}\vy$). Since most of the eigenvalues of $\mB_{N,W}$ are either very close to $1$ or very close to $0$, most of the eigenvalues of $\mB_{N,W}^{(\text{tik})}$ are either very close to $\tfrac{1}{1+\alpha}$ or very close to $0$. Hence, it is reasonable to expect that $\tfrac{1}{1+\alpha}\mB_{N,W}$ and $\mB_{N,W}^{(\text{tik})}$ are within a low-rank correction. The following corollary shows that this is indeed the case.

\begin{cor}\label{cor:prolateTikhonovLR}
Let $N \in \N$ and $W \in (0, \tfrac{1}{2})$ and $\alpha > 0$ be given, and define $\mB_{N,W}^{(\text{tik})} = (\mB_{N,W}^2+\alpha \mId)^{-1}\mB_{N,W}$. Then, for any $\eps \in (0,\tfrac{1}{2})$, there exists an $N \times r_4$ matrix $\mU_5$ and an $N \times N$ matrix $\mE_4$ such that
\[
\mB_{N,W}^{(\text{tik})} = \dfrac{1}{1+\alpha}\mB_{N,W} + \mU_5 \mU_5^* + \mE_4,
\]
where
\[
r_4 \le \left( \frac{8}{\pi^2} \log(8N) + 12 \right) \log \left( \frac{15}{\min(\alpha(1+\alpha)\eps,\tfrac{1}{3}\eps)} \right) ~~ \text{and} ~~  \| \mE_4 \| \le \eps.
\]
\end{cor}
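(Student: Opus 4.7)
Plan. The key observation is that $\mB_{N,W}^{(\text{tik})}$ and $\mB_{N,W}$ share the same eigenvectors, so the matrix problem reduces to a scalar analysis comparing the filters $\lambda \mapsto \lambda/(\lambda^2+\alpha)$ and $\lambda \mapsto \lambda/(1+\alpha)$. I would first diagonalize: starting from $\mB_{N,W} = \mS_{N,W}\mLambda_{N,W}\mS_{N,W}^*$, applying functional calculus gives $\mB_{N,W}^{(\text{tik})} = \mS_{N,W}(\mLambda_{N,W}^2+\alpha\mId)^{-1}\mLambda_{N,W}\mS_{N,W}^*$, whence
\[
\mB_{N,W}^{(\text{tik})} - \tfrac{1}{1+\alpha}\mB_{N,W} = \mS_{N,W}\mD\mS_{N,W}^*,
\]
with $\mD$ diagonal and $\ell$-th entry
\[
d_\ell = \frac{\lambda_{N,W}^{(\ell)}}{(\lambda_{N,W}^{(\ell)})^2+\alpha} - \frac{\lambda_{N,W}^{(\ell)}}{1+\alpha} = \frac{\lambda_{N,W}^{(\ell)}\bigl(1-\lambda_{N,W}^{(\ell)}\bigr)\bigl(1+\lambda_{N,W}^{(\ell)}\bigr)}{\bigl((\lambda_{N,W}^{(\ell)})^2+\alpha\bigr)(1+\alpha)} \ge 0.
\]
The crucial features are that $d_\ell \ge 0$ (so the correction will be PSD) and that $d_\ell$ vanishes whenever $\lambda_{N,W}^{(\ell)} \in \{0,1\}$, so $d_\ell$ is small precisely where DPSS eigenvalues cluster.

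Next, I would introduce the threshold $\eps^* := \min(\alpha(1+\alpha)\eps,\tfrac{1}{3}\eps)$, partition the indices into a transition set $\mathcal{I}_{\text{trans}} = \{\ell : \eps^* < \lambda_{N,W}^{(\ell)} < 1-\eps^*\}$ and its complement $\mathcal{I}_{\text{bulk}}$, and split $\mD = \mD_{\text{trans}} + \mD_{\text{bulk}}$ accordingly. Defining
\[
\mE_4 := \mS_{N,W}\mD_{\text{bulk}}\mS_{N,W}^*, \qquad \mU_5\mU_5^* := \mS_{N,W}\mD_{\text{trans}}\mS_{N,W}^*,
\]
yields the desired decomposition. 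For the bulk error, a simple case analysis on $\lambda_{N,W}^{(\ell)} \le \eps^*$ versus $\lambda_{N,W}^{(\ell)} \ge 1-\eps^*$ bounds the numerator of $d_\ell$ by a small constant multiple of $\eps^*$ while bounding the denominator below by $\alpha(1+\alpha)$ (or, when $\alpha$ is large, by $(\tfrac14+\alpha)(1+\alpha)$ using $\lambda_{N,W}^{(\ell)}\ge 1/2$); the particular choice of $\eps^*$ is designed precisely so that both cases produce $d_\ell \le \eps$, giving $\|\mE_4\| = \max_{\ell\in\mathcal{I}_{\text{bulk}}} d_\ell \le \eps$.

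For the low-rank piece, the PSD property $d_\ell \ge 0$ allows me to factor $\mS_{N,W}\mD_{\text{trans}}\mS_{N,W}^* = \mU_5\mU_5^*$ with $\mU_5$ having one column $\sqrt{d_\ell}\,\vs_{N,W}^{(\ell)}$ for each $\ell \in \mathcal{I}_{\text{trans}}$. Because $\eps^* < 1/2$ by construction, Corollary~\ref{cor:transition} applies to the set $\mathcal{I}_{\text{trans}}$ and gives $|\mathcal{I}_{\text{trans}}| \le \bigl(\tfrac{8}{\pi^2}\log(8N)+12\bigr)\log(15/\eps^*)$, matching the stated bound on $r_4$.

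The main technical obstacle is keeping the constants tight across the full range $\alpha\in(0,\infty)$. The cap $\tfrac{1}{3}\eps$ exists solely to guarantee $\eps^* < 1/2$ when $\alpha(1+\alpha)\eps$ would otherwise be too large for Corollary~\ref{cor:transition} to apply. Verifying that this cap still yields the bulk bound $\|\mE_4\|\le\eps$ in the $\alpha\gg 1$ regime requires the sharper denominator estimate mentioned above, but is otherwise routine arithmetic; everything else follows directly from the PSD structure and the transition-width bound of Corollary~\ref{cor:transition}.
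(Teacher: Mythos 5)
Your proposal is correct and follows essentially the same route as the paper's proof: diagonalize so that $\mB_{N,W}^{(\text{tik})} - \tfrac{1}{1+\alpha}\mB_{N,W}$ has nonnegative diagonal entries $d_\ell = \tfrac{\lambda_\ell(1-\lambda_\ell^2)}{(1+\alpha)(\lambda_\ell^2+\alpha)}$ in the Slepian eigenbasis, put the transition eigenvalues into a PSD factor $\mU_5\mU_5^*$, push the bulk into $\mE_4$, and invoke Corollary~\ref{cor:transition} with threshold $\eps^* = \min(\alpha(1+\alpha)\eps,\tfrac13\eps)$ for the rank bound. (The paper isolates this as a standalone lemma and uses the slightly smaller asymmetric transition set $\{\alpha(1+\alpha)\eps < \lambda_\ell < 1-\tfrac13\eps\}$, but this does not change the stated bound on $r_4$.) One small correction to your sketch of the upper-tail estimate: $\lambda_\ell \ge 1/2$ is not sharp enough — since $\eps^* \le \tfrac13\eps < \tfrac16$ you actually have $\lambda_\ell \ge 5/6$, hence $\lambda_\ell^2 \ge 25/36$, which is what makes the bound $d_\ell \le \tfrac{2\eps^*}{(1+\alpha)(\lambda_\ell^2+\alpha)} \le \eps$ go through across the whole range $\alpha \in (0,\infty)$ (in particular for $\alpha$ near $0.26$ where $\lambda_\ell^2 \ge 1/4$ alone would fail); also note that the cap $\tfrac13\eps$ is needed precisely for this bulk estimate, not merely to keep $\eps^* < \tfrac12$ for Corollary~\ref{cor:transition}.
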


In the next section, we will use Theorem ~\ref{thm:prolateFFTLR} along with Corollaries ~\ref{cor:prolateSlepianLR}, ~\ref{cor:prolatePseudoinverseLR}, and ~\ref{cor:prolateTikhonovLR} to derive fast algorithms for working with the Slepian basis.

\subsection{The Fast Slepian Transform}

\subsection*{A fast factorization of $\mS_K\mS_K^*$}

Suppose we wish to compress a vector $\vx \in \C^N$ of $N$ uniformly spaced samples of a signal down to a vector of $K \approx 2NW$ elements in such a way that best preserves the DTFT of the signal over $|f| \le W$. We can do this by storing $\mS_K^*\vx$, which is a vector of $K < N$ elements, and then later recovering $\mS_K\mS_K^*\vx$, which contains nearly all of the energy of the signal in the frequency band  $|f| \le W$. However, na\"{i}ve multiplication of $\mS_K$ or $\mS_K^*$ takes $O(NK) = O(2WN^2)$ operations. For certain applications, this may be intractable.

If we combine the results of Corollary~\ref{cor:prolateSlepianLR} along with that of Theorem~\ref{thm:prolateFFTLR}, we get that
\begin{align*}
\mS_K\mS_K &= \mB_{N,W} + \mU_1\mU_2^* + \mE_2
\\
&= \mF_{N,W}\mF_{N,W}^* + \mL_1\mL_2^* + \mU_1\mU_2^* + \mE_1 + \mE_2
\\
&= \mT_1\mT_2^* + \mE_1 + \mE_2
\end{align*}
where $$\mT_1 = \begin{bmatrix}\mF_{N,W} & \mL_1 & \mU_1 \end{bmatrix} ~~~~~ \text{and} ~~~~~ \mT_2 = \begin{bmatrix}\mF_{N,W} & \mL_2 & \mU_2 \end{bmatrix}.$$
Both $\mT_1$ and $\mT_2$ are $N \times K'$ matrices where $$K' = 2NW'+r_1+r_2 \le \lceil2NW\rceil + \left( \dfrac{12}{\pi^2} \log(8N) + 18 \right) \log \left( \dfrac{15}{\eps} \right).$$ So we can compress $\vx$ by computing $\mT_2^*\vx$, which is a vector of $K' \approx 2NW$ elements, and then later recover $\mT_1\mT_2^*\vx$. By using the triangle inequality, we have $\|\mS_K\mS_K^*-\mT_1\mT_2^*\| = \|\mE_1+\mE_2\| \le \|\mE_1\|+\|\mE_2\| \le 2\eps$. Hence, $\|\mS_K\mS_K^*\vx-\mT_1\mT_2^*\vx\|_2 \le 2\eps\|\vx\|_2$ for any vector $\vx \in \C^N$. Both $\mF_{N,W}$ and $\mF_{N,W}^*$ can be applied to a vector in $O(N\log N)$ operations via the FFT. Since $\mL_1$, $\mL_2$, $\mU_1$, and $\mU_2$ are $N \times O(\log N \log\tfrac{1}{\eps})$ matrices, $\mL_1$, $\mL_2^*$, $\mU_1$, and $\mU_2^*$ can each be applied to a vector in $O(N\log N \log\tfrac{1}{\eps})$ operations. Therefore, computing $\mT_2^*\vx$ and later recovering $\mT_1\mT_2^*\vx$ (as an approximation for $\mS_K\mS_K^*\vx$) takes $O(N\log N \log\tfrac{1}{\eps})$ operations.

\vspace{0.2cm}

\subsection*{Fast projections onto the range of $\mS_K$}

Alternatively, if we only require computing the projected vector $\mS_K\mS_K^*\vx$, and compression is not required, then there is a simpler solution. Corollary~\ref{cor:prolateSlepianLR} tells us that $\|\mS_K\mS_K^*-(\mB_{N,W}+\mU_1\mU_2^*)\| \le \eps$, and thus, $\|\mS_K\mS_K^*\vx-(\mB_{N,W}\vx+\mU_1\mU_2^*\vx)\|_2 \le \eps\|\vx\|_2$ for any vector $\vx \in \C^N$. Since $\mB_{N,W}$ is a Toeplitz matrix, computing $\mB_{N,W}\vx$ can be done in $O(N\log N)$ operations via the FFT. Since $\mU_1$ and $\mU_2$ are $N \times O(N \log N \log \tfrac{1}{\eps})$ matrices, computing $\mU_1\mU_2^*\vx$ can be done in $O(N \log N \log \tfrac{1}{\eps})$ operations. Therefore, we can compute $\mB_{N,W}\vx+\mU_1\mU_2^*\vx$ as an approximation to $\mS_K\mS_K^*\vx$ using only $O(N \log N \log \tfrac{1}{\eps})$ operations.

\vspace{0.2cm}

\subsection*{Fast rank-$K$ truncated pseudoinverse of $\mB_{N,W}$}

A closely related problem to working with the matrix $\mS_K \mS_K^*$ concerns the task of solving a linear system of the form $\vy = \mB_{N,W}\vx$. Since the prolate matrix has several eigenvalues that are close to $0$, the system is often solved by using the rank-$K$ truncated pseudoinverse of $\mB_{N,W}$ where $K \approx 2NW$. Even if the pseudoinverse is precomputed and factored ahead of time, it still takes $O(NK) = O(2WN^2)$ operations to apply to a vector $\vy \in \C^N$. Corollary~\ref{cor:prolatePseudoinverseLR} tells us that $\|\mB_{N,W}^{\dagger}-(\mB_{N,W}+\mU_3\mU_4^*)\| \le 3\eps$, and thus, $\|\mB_{N,W}^{\dagger}\vy-(\mB_{N,W}\vy+\mU_3\mU_4^*\vy)\|_2 \le 3\eps\|\vy\|_2$ for any vector $\vy \in \C^N$. Again, computing $\mB_{N,W}\vy$ can be done in $O(N \log N)$ operations using the FFT, and computing $\mU_3\mU_4^*\vy$ can be done in $O(N\log N \log \tfrac{1}{\eps})$ operations. Therefore, we can compute $\mB_{N,W}\vy+\mU_3\mU_4^*\vy$ as an approximation to $\mB_{N,W}^{\dagger}\vy$ using only $O(N \log N \log \tfrac{1}{\eps})$ operations.

\vspace{0.2cm}

\subsection*{Fast Tikhonov regularization involving $\mB_{N,W}$}

Another approach to solving the ill-conditioned system $\vy = \mB_{N,W}\vx$ is to use Tikhonov regularization, i.e., minimize $\|\vy-\mB_{N,W}\vx\|_2^2+\alpha\|\vx\|_2^2$ where $\alpha > 0$ is a regularization parameter. The solution to this minimization problem is $\vx = (\mB_{N,W}^2+\alpha\mId)^{-1}\mB_{N,W}\vy$. Even if the matrix $\mB_{N,W}^{(\text{tik})} = (\mB_{N,W}^2+\alpha\mId)^{-1}\mB_{N,W}$ is computed ahead of time, it still takes $O(N^2)$ operations to apply to a vector $\vy$. Corollary~\ref{cor:prolateTikhonovLR} tells us that $\|\mB_{N,W}^{(\text{tik})}-(\mB_{N,W}+\mU_5\mU_5^*)\| \le \eps$, and thus, $\|\mB_{N,W}^{(\text{tik})}\vy-(\mB_{N,W}\vy+\mU_5\mU_5^*\vy)\|_2 \le \eps\|\vy\|_2$ for any vector $\vy \in \C^N$. Again, computing $\mB_{N,W}\vy$ can be done in $O(N \log N)$ operations via the FFT. Since $\mU_5$ is a $N \times O\left(\log N \max(\log \frac{1}{\alpha\eps} , \log\frac{1}{\eps})\right)$ matrix, computing $\mU_5\mU_5^*\vy$ can be done in $O\left(N \log N \max(\log \frac{1}{\alpha\eps} , \log\frac{1}{\eps})\right)$ operations. Therefore, we can compute $\mB_{N,W}\vy+\mU_5\mU_5^*\vy$ as an approximation to $\mB_{N,W}^{(\text{tik})}\vy$ using only $O\left(N \log N \max(\log \frac{1}{\alpha\eps} , \log\frac{1}{\eps})\right)$ operations.

\vspace{0.2cm}

The least-squares problems above involve the inverse of $\mB_{N,W}$, a symmetric semi-definite Toeplitz matrix.  There is a long history of ``superfast'' algorithms for inverting such systems in the signal processing \cite{morf74fa,kailath79di} and numerical linear algebra \cite{ammar87ge,heinig95in,vanbarel03su} literature. These algorithms take a number of different forms.  They usually work by breaking the matrix into smaller blocks, either hierarchically \cite{martinsson05fa} or recusively \cite{bitmead80as,pan01st}, and then exploiting the structure of the matrix to efficiently combine the solutions of smaller systems into a solution for the entire system. The overall computational complexity of these algorithms is $O(N\log^2 N)$ for the first solve with a given matrix, and $O(N\log N)$ for subsequent solves.  An overview of these methods can be found in \cite{turnes14ef}.

The approach suggested by Corollary~\ref{cor:prolatePseudoinverseLR} (and the regularized version in Corollary~\ref{cor:prolateTikhonovLR}) have the same run time of $O(N\log N)$, but are based on entirely different principles.  Theorem~\ref{thm:prolateFFTLR} essentially states that the matrix $\mB_{N,W}$ is a low-rank update away from an orthoprojection, and this orthoprojection can be computed quickly using the FFT. Corollaries ~\ref{cor:prolatePseudoinverseLR} and ~\ref{cor:prolateTikhonovLR} show that this property also holds for the (regularized) pseudo-inverse.  These mathematical results show that this particular system can be very closely approximated  by a sum of circulant and low-rank matrices, which leads directly to efficient algorithms for solving least-squares problems.

It is worth noting that the algorithms above also have a fast precomputation time. The columns of $\boldsymbol{U}_1$, $\boldsymbol{U}_2$, $\boldsymbol{U}_3$, $\boldsymbol{U}_4$, and $\boldsymbol{U}_5$ are all rescaled Slepian basis vectors. In \cite{huybrechs2016FastAlgorithms}, the authors describe a method to compute $r$ Slepian basis vectors and corresponding eigenvalues in $O(rN\log N)$ operations. Hence, $\boldsymbol{U}_1$, $\boldsymbol{U}_2$, $\boldsymbol{U}_3$, and $\boldsymbol{U}_4$ can be precomputed in $O(N \log^2 N \log \tfrac{1}{\epsilon})$ operations and $\boldsymbol{U}_5$ can be precomputed in $O(N \log^2 N \max(\log \frac{1}{\alpha \epsilon } , \log \frac{1}{\epsilon}))$ operations. Also, the proof of Theorem~\ref{thm:prolateFFTLR} provides a construction of $\boldsymbol{L}_1$ and $\boldsymbol{L}_2$ which can be computed in $O(N \log N \log \tfrac{1}{\epsilon})$ operations. Hence, the fast factorization of $\boldsymbol{S}_K\boldsymbol{S}_K^*$, the fast projection onto the range of $\boldsymbol{S}_K$, and the fast truncated pseudoinverse of $\boldsymbol{B}_{N,W}$ have a precomputation time of $O(N\log^2 N \log \tfrac{1}{\epsilon})$, and the fast Tikhonov regularization has a precomputation time of $O(N \log^2 N \max (\log \frac{1}{\alpha \epsilon } , \log \frac{1}{\epsilon}))$. 

\subsection{Applications}
\label{ssec:apps}

Owing to the concentration in the time and frequency domains, the Slepian basis vectors have proved to be useful in numerous signal processing problems~\cite{AhmadQianAmin2015WallCluterDPSS,DavenportWakin2012CSDPSS,Slepi_ProlateV,zemen2005channelEstim,DavenSSBWB_Wideband}. Linear systems of equations involving the prolate matrix $\mB_{N,W}$ also arise in several problems, such as band-limited extrapolation~\cite{Slepi_ProlateV}. In this section, we describe some specific applications that stand to benefit from the fast constructions described above.


\vspace{0.2cm}
{\bf $i.$ Representation and compression of sampled bandlimited and multiband signals.} Consider a length-$N$ vector $\vx$ obtained by uniformly sampling a baseband analog signal $x(t)$
over the time interval $[0,NT_s)$ with sampling period $T_s\leq \frac{1}{B_{\text{band}}}$ chosen to satisfy the Nyquist sampling rate. Here, 
$x(t)$ is assumed to be bandlimited with frequency range $[-B_{\text{band}}/2,B_{\text{band}}/2]$. Under this assumption, the sample vector $\vx$ can be expressed as
\begin{equation}
\vx[n] = \int_{-W}^{W} X(f) e^{j2\pi fn} \; df, ~n = 0,1,\dots, N-1,
\label{eq:sampled bandlimited vectors}
\end{equation}
or equivalently,
\begin{equation}
\vx = \int_{-W}^{W} X(f) \ve_f \; df
\label{eq:sampled bandlimited vectors ef}
\end{equation}
where $W = T_sB_{\text{band}}/2\leq \frac12$ and $X(f)$ is the DTFT of the infinite sample sequence $x[n] = x(nT_s), n\in \mathbb{Z}$. Such finite-length vectors of samples from bandlimited signals arise in problems such as time-variant channel estimation \cite{zemen2005channelEstim} and mitigation of narrowband interference \cite{Davenport2010SignalProcessingCompressiveMeasurenemts}. Solutions to these and many other problems benefit from representations that efficiently capture the structure inherent in vectors $\vx$ of the form \eqref{eq:sampled bandlimited vectors ef}.

In~\cite{DavenportWakin2012CSDPSS}, the authors showed that such a vector $\vx$ has a low-dimensional structure by building a dictionary in which $\vx$ can be approximated with a small number of atoms. The $N\times N$ DFT basis is insufficient to capture the low dimensional structure in $\vx$  due to the ``DFT leakage'' phenomenon. In particular, the DFT basis is comprised of vectors $\ve_f$ with $f$ sampled uniformly between $-1/2$ and $1/2$. From \eqref{eq:sampled bandlimited vectors ef}, one can interpret $\vx$ as being comprised of a linear combination of vectors $\ve_f$ with $f$ ranging continuously between $-W$ and $W$. It is natural to ask whether $\vx$ could be efficiently represented using only the DFT vectors $\ve_f$ with $f$ between $-W$ and $W$; in particular, these are the columns of the matrix $\mF_{N,W}$ defined in \eqref{eq:PartialFourierMatrix}. Unfortunately, this is not the case---while a majority of the energy of $\vx$ can be captured using the columns of $\mF_{N,W}$, a nontrivial amount will be missed and this is contained in the familiar sidelobes in the DFT outside the band of interest.

An efficient alternative to the partial DFT $\mF_{N,W}$ is given by the partial Slepian basis $\mS_K$ when $K \approx 2NW$. In~\cite{DavenportWakin2012CSDPSS}, for example, it is established that when $\vx$ is generated by sampling a bandlimited analog random process with flat power spectrum over $[-B_{\text{band}}/2,B_{\text{band}}/2]$, and when one chooses $K = 2NW(1+\rho)$, then on average $\mS_K \mS_K^\ast \vx$ will capture all but an exponentially small amount of the energy from $\vx$. Zemen and Mecklenbr\"{a}uke \cite{zemen2005channelEstim} showed that expressing the time-varying subcarrier coefficients in a Slepian basis yields better performance than that obtained with a DFT basis, which suffers from frequency leakage.

By modulating the (baseband) Slepian basis vectors to different frequency bands and then merging these dictionaries, one can also obtain a new dictionary that offers an efficient representation of sampled {\em multiband} signals. Zemen et al.~\cite{zemen2007minimum} proposed one such dictionary for estimating a time-variant flat-fading channel whose spectral support is a union of several intervals. In the context of compressive sensing, Davenport and Wakin~\cite{DavenportWakin2012CSDPSS} investigated multiband modulated DPSS dictionaries for sparse recovery of sampled multiband signals, and Sejdi\'{c} et al.~\cite{sejdic2012compressive} applied such dictionaries for recovery of physiological signals from compressive measurements.
Zhu and Wakin~\cite{Zhu2015targetDetectDPSS} employed such dictionaries for detecting targets behind the wall in through-the-wall radar imaging, and modulated DPSS's can also be useful for mitigating wall clutter~\cite{AhmadQianAmin2015WallCluterDPSS}.

In summary, many of the above mentioned problems are facilitated by projecting a length-$N$ vector onto the subspace spanned by the first $K \approx 2NW$ Slepian basis vectors (i.e., computing $\mS_K \mS_K^\ast \vx$). One version of the Block-Based CoSaMP algorithm in \cite{DavenportWakin2012CSDPSS} involves computing the projection of a vector onto the column space of a modulated DPSS dictionary. The channel estimates proposed in~\cite{SejdicICASSP2008ChannelEstimationDPSS} are based on the projection of the subcarrier coefficients onto the column space of the modulated multiband DPSS dictionary. Of course, one can also compress $\vx$ by keeping the $K \approx 2NW$ Slepian basis coefficients $\mS_K^\ast \vx$ instead of the $N$ entries of $\vx$. Computationally, all of these problems benefit from having a fast Slepian transform: whereas direct matrix-vector multiplication would require $O(NK) = O(2WN^2)$ operations, the fast Slepian constructions allow these computations to be approximated in only $O\left(N \log N \log \frac{1}{\eps}\right)$ operations.


\vspace{0.2cm}
{\bf $ii.$ Prolate matrix linear systems.} Linear equations of the form $\mB_{N,W}\vy = \vb$ arise naturally in signal processing. For example, suppose we obtain the length-$N$ sampled bandlimited vector $\vx$ as defined in (\ref{eq:sampled bandlimited vectors}) and we are interested in estimating the infinite-length sequence $x[n] = x(nT_s),~\forall n\in\mathbb{Z}$. The discrete-time signal $x[n]$ is assumed to be bandlimited to $[-W,W]$ for $W<\frac{1}{2}$. Let $\mathcal{I}_N: \ell_2(\mathbb{Z})\rightarrow \mathbb{C}^N$ denote the index-limiting operator that restricts a sequence to its entries on $[N]$ (and produces a vector of length $N$); that is $\mathcal{I}_N(y)[m]: = y[m]$ for all $m\in\{0,1,\ldots,N-1\}$.
Also, recall that $\mathcal{B}_{W}: \ell_2(\mathbb{Z})\rightarrow \ell_2(\mathbb{Z})$ denotes the band-limiting operator that bandlimits the DTFT of a discrete-time signal to the frequency range $[-W,W]$.
Given $\vx$, the least-squares estimate $\widehat x[n]\in l_2(\mathbb{Z})$ for the infinite-length bandlimited sequence takes the form
$$\widehat{x}[n] = [(\mathcal{I}_N\mathcal{B}_W)^\dagger\vx][n] = \sum_{m=0}^{N-1}v[m]\frac{\sin2\pi W(n-m)}{\pi(n-m)},$$
where $\vv = \mB_{N,W}^\dagger \vx$.

Another problem involves linear prediction of bandlimited signals based on past samples. Suppose $x(t)$ is a continuous, zero-mean, wide sense stationary random process with power spectrum
\begin{equation} P_{x}(F)=\left\{\begin{array}{ll} \frac{1}{B_{\textup{band}}},& - \frac{B_{\textup{band}}}{2}\leq F \leq  \frac{B_{\textup{band}}}{2}, \\0, & \text{otherwise}.\end{array}\right.\nonumber\end{equation}
Let $x[n]=x(nT_s)$ denote the samples acquired from $x(t)$ with a sampling interval of $T_s\leq \frac{1}{B_{\text{band}}}$. A linear prediction of $x[N]$ based on the previous $N$ samples $x[0],x[1],\ldots,x[N-1]$ takes the form~\cite{Slepi_ProlateV}
$$\widehat x[N] = \sum_{n=0}^{N-1} a_n x[n].$$
Choosing $a_n$ such that $\widehat x[N]$ has the minimum mean-squared error is equivalent to solving
$$\min_{a_n} \varrho :=\E\left[\left(\sum_{n=0}^{N-1} a_n x[n] - x[N]\right)^2  \right].$$
Let $W = \frac{T_s}{2B_{\text{band}}}$. Taking the derivative of $\varrho$ and setting it to zero  yields
\begin{align*}
\mB_{N,W}\va = \vb
\end{align*}
with $\va = \left[ a_0 ~ a_1 ~ \cdots ~ a_{N-1} \right]^T$ and $\vb = \left[\frac{\sin \left(2\pi W N  \right)}{\pi N } ~  \frac{\sin \left(2\pi W \left(N-1 \right) \right)}{\pi \left(N -1  \right)} ~ \cdots~ \frac{\sin \left(2\pi W 1 \right)}{\pi 1}\right]^T$.  Thus the optimal $\widehat{\va}$ is simply given by $\widehat{\va} = \mB_{N,W}^\dagger \vb$.

We present one more example: the Fourier extension~\cite{huybrechs2010fourierExtension}. The partial Fourier series sum
$$y_{N'} (t) = \frac{1}{\sqrt{2}}\sum_{|n|\leq N'}\widehat{y}_ne^{jn\pi t},~~\widehat{y}_n = \frac{1}{\sqrt 2}\int_{-1}^{1}y(t)e^{-jn\pi t}dt$$
of a non-periodic function $y\in L^2_{[-1,1]}$ (such as $y(t) = t$) suffers from the Gibbs phenomenon. One approach to overcome the Gibbs phenomenon is to extend the function $y$ to a function $g$ that is periodic on a larger interval $[-T,T]$ with $T>1$ and compute the partial Fourier series of $g$ \cite{huybrechs2010fourierExtension}. Let $G_{N''}$ be the space of bandlimited $2T$-periodic functions
$$ G_{N''} := \left\{g: g(t) = \sum_{n=-N''}^{N''}{\widehat g}_ne^{\frac{jn\pi t}{T}},~\widehat g_n\in \mathbb{C}\right\}.$$
The Fourier extension problem involves finding
\begin{equation}
g_{N''} := \arg\min_{g\in G_{N''}} \| y - g \|_{L^2_{[-1,1]}}.
\label{eq:FourierExtensionProb}\end{equation}
The solution $g_{N''}$ is called the Fourier extension of $y$ to the interval $[-T,T]$. Let $\widehat{\vg} = \left[ \widehat g_{-N''} ~ \cdots ~ \widehat g_{0} ~ \cdots ~ \widehat g_{N''}  \right]^T$ and define $\mathcal{F}_{N''}: L_2([-1,1])\rightarrow \mathbb{C}^{2N''+1}$ as
$$(\mathcal{F}_{N''}(u))[n] = \frac{1}{\sqrt{2T}}\int_{-1}^{1}u(t)e^{-\frac{jn\pi t}{T}}dt , ~~|n|\leq N''.$$
For convenience, here we index all vectors and matrices beginning at $-N''$. Any minimizer $\widehat{\vg}$ of the least-squares problem~\eqref{eq:FourierExtensionProb} must satisfy the normal equations
\begin{equation}\mathcal{F}_{N''}\mathcal{F}_{N''}^*\widehat{\vg} = \mathcal{F}_{N''}y,\label{eq:fourier extenion linear problem}\end{equation}
where $\mathcal{F}_{N''}y$ can be efficiently approximately computed via the FFT. One can show that $\mathcal{F}_{N''}\mathcal{F}_{N''}^* = \mB_{N,W}$, where $N = 2N'' +1$ and $W = \frac{1}{2T}\leq \frac{1}{2}$.

Each of the above least-squares problems could be solved by computing a rank-$K$ truncated pseudo-inverse of $\mB_{N,W}$ with $K \approx 2NW$. Direct multiplication of this matrix with a vector, however, would require $O(NK) = O(2WN^2)$ operations. The fast methods we have developed allow a fast approximation to the truncated pseudo-inverse to be applied in only $O\left(N \log N \log \frac{1}{\eps}\right)$ operations.

This paper continues in Sections~\ref{sec:proofProlateFFTLR} and \ref{sec:proofCorollaries} with a proof of Theorem~\ref{thm:prolateFFTLR} and its corollaries. We conclude the paper in Section~\ref{sec:simulations} with several simulations that demonstrate the computational advantages of the proposed constructions.


\section{Proof of Theorem~\ref{thm:prolateFFTLR}}
\label{sec:proofProlateFFTLR}
Our goal is to show that $\mB_{N,W} - \mF_{N,W} \mF_{N,W}^*$ is well-approximated as a factored low rank matrix. To do this, we will express $\mB_{N,W} - \mF_{N,W} \mF_{N,W}^*$ in terms of other matrices, whose entries also have a closed form. We will then derive a factored low rank approximation for each of these other matrices. Finally, we will combine these low rank approximations to get a factored low rank approximation for $\mB_{N,W} - \mF_{N,W} \mF_{N,W}^*$.

Towards this end, we define $\mD_A$ to be the $N \times N$ diagonal matrix with diagonal entries $\mD_A[n,n] = e^{j2\pi W'n}$ for $n=0, \ldots, N-1$ and define $\mA_0$ to be the $N \times N$ matrix with entries
\[
\mA_0[m,n] = \begin{cases} \frac{1}{\pi (m-n)} - \frac{1}{N\sin\left(\pi \tfrac{m-n}{N}\right)} & \text{if} \ m \neq n, \\ 0 & \text{if} \ m = n.\end{cases}
\]
We also define $\mD_B$ to be the $N \times N$ diagonal matrix with diagonal entries $\mD_B[n,n] = e^{j(W+W')n}$ for $n = 0,\ldots,N-1$ and define $\mB_0$ to be the $N \times N$ matrix with entries
\[
\mB_0[m,n] = \dfrac{2\sin(\pi(W-W')(m-n))}{\pi(m-n)}.
\]
Note that with these definitions, we can write the $(m,n)$-th entry of $\mB_{N,W} - \mF_{N,W} \mF_{N,W}^*$ as
\begin{align*}
&[\mB_{N,W} - \mF_{N,W} \mF_{N,W}^*] [m,n]
\\
&= \frac{\sin(2\pi W(m-n))}{\pi(m-n)} - \frac{\sin(2\pi W' (m-n))}{N\sin(\pi\tfrac{m-n}{N})}
\\
&= \frac{\sin(2\pi W(m-n))}{\pi(m-n)} - \frac{\sin(2\pi W'(m-n))}{\pi(m-n)} + \frac{\sin(2\pi W'(m-n))}{\pi(m-n)}- \frac{\sin(2\pi W' (m-n))}{N\sin(\pi\tfrac{m-n}{N})}
\\
&= \frac{2\sin(\pi(W-W')(m-n))\cos(\pi(W+W')(m-n))}{\pi(m-n)} + \frac{\sin(2\pi W'(m-n))}{\pi(m-n)}- \frac{\sin(2\pi W' (m-n))}{N\sin(\pi\tfrac{m-n}{N})}
\\
&= \mB_0[m,n]\cos(\pi(W+W')(m-n)) + \mA_0[m,n]\sin(2\pi W'(m-n))
\\
&= \left[\frac{1}{2}\mD_B\mB_0\mD_B^* + \frac{1}{2}\mD_B^*\mB_0\mD_B + \frac{1}{2j}\mD_A\mA_0\mD_A^* - \frac{1}{2j}\mD_A^*\mA_0\mD_A\right][m,n].
\end{align*}
Hence,
\begin{equation}\label{eq:BA0}
\mB_{N,W} - \mF_{N,W} \mF_{N,W}^* = \frac{1}{2j}\mD_A\mA_0\mD_A^* - \frac{1}{2j}\mD_A^*\mA_0\mD_A + \frac{1}{2}\mD_B\mB_0\mD_B^* + \frac{1}{2}\mD_B^*\mB_0\mD_B.
\end{equation}
Thus, we can find a low-rank approximation for $\mB_{N,W} - \mF_{N,W} \mF_{N,W}^*$  by finding low-rank approximations for $\mA_0$ and $\mB_0$. In order to do so, it is useful to consider the $N \times N$ matrix $\mA_1$ defined by
\[
\mA_1[m,n] = \begin{cases} \mA_0[m,n] - \frac{1}{\pi(m-n+N)} - \frac{1}{\pi(m-n-N)} & \text{if} \ m \neq n, \\ 0 & \text{if} \ m = n.\end{cases}
\]
Next, we let $\mH$ denote the {\em Hilbert matrix}, i.e., the $N \times N$ matrix with entries
\[
 \mH[m,n] = \frac{1}{m+n+1},
\]
and let $\mJ$ be the $N \times N$ matrix with $1$'s along the antidiagonal and zeros elsewhere. (Note that for an arbitrary $N \times N$ matrix $\mX$, $\mJ \mX$ is simply $\mX$ flipped vertically and $\mX \mJ$ is $\mX$ flipped horizontally.) Using these definitions, we can write $\mA_0$ as
\begin{equation} \label{eq:A0A1}
\mA_0 = \frac{1}{\pi}(\mH \mJ - \mJ \mH) + \mA_1 .
\end{equation}
By combining~\eqref{eq:BA0} and~\eqref{eq:A0A1}, we get
\begin{equation} \label{eq:BFF}
\begin{aligned}
&\mB_{N,W} - \mF_{N,W} \mF_{N,W}^*
\\
&= \tfrac{1}{2j}\mD_A\left[\tfrac{1}{\pi}(\mH \mJ - \mJ \mH) + \mA_1 \right]\mD_A^* - \tfrac{1}{2j}\mD_A^*\left[\tfrac{1}{\pi}(\mH \mJ - \mJ \mH) + \mA_1 \right]\mD_A + \tfrac{1}{2}\mD_B\mB_0\mD_B^* + \tfrac{1}{2}\mD_B^*\mB_0\mD_B.~~~~~~~~
\end{aligned}
\end{equation}
Therefore, we can come up with a factored low rank approximation for $\mB_{N,W} - \mF_{N,W} \mF_{N,W}^*$ by first deriving factored low rank approximations for each of the matrices $\mH$, $\mA_1$, and $\mB_0$.

\subsubsection*{Low rank approximation of $\mH$}

Our goal is to construct a low-rank matrix $\widetilde{\mH} = \mZ \mZ^*$ such that $\| \mH - \widetilde{\mH} \| \le \delta_H$ for some desired $\delta_H > 0$.  We will do this via Lemma~\ref{lem:Lyapunov}, which we prove in the appendix.

\begin{lemma} \label{lem:Lyapunov}
Let $\mA$ be an $N\times N$ symmetric positive definite matrix with condition number $\kappa$, let $\mB$ be an arbitrary $N\times M$ matrix with $M\leq N$, and let $\mX$ be the $N\times N$ positive definite solution to the Lyapunov equation
\[
\mA\mX + \mX\mA^* = \mB\mB^*.
\]	
Then for any $\delta \in (0,1]$, there exists an $N\times rM$ matrix $\mZ$ with
\begin{equation}\label{eq:rankH}
r =\left\lceil\frac{1}{\pi^2}\log\left(4\kappa\right)\log\left(\frac{4}{\delta}\right) \right\rceil,
\end{equation}
such that
\begin{equation}
\label{eq:lylrap}
\|\mX-\mZ\mZ^*\| ~\leq~ \delta \|\mX\|.
\end{equation}
\end{lemma}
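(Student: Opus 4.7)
The plan is to build $\mZ$ via the classical low-rank factored ADI (Alternating Directions Implicit) iteration applied to the Lyapunov equation, whose convergence rate is governed by a Zolotarev-type rational approximation estimate. This is the standard machinery behind low-rank solvers for Lyapunov equations with low-rank right-hand side.

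First I would fix positive real shifts $\mu_1, \mu_2, \ldots, \mu_r > 0$, to be optimized at the end, and define the ADI factor
\[
\mZ \;=\; \bigl[\, \sqrt{2\mu_1}\,\mG_1\mB \;\; \sqrt{2\mu_2}\,\mG_2\mB \;\; \cdots \;\; \sqrt{2\mu_r}\,\mG_r\mB \,\bigr] \;\in\; \C^{N \times rM},
\]
where $\mG_1 = (\mA + \mu_1 \mId)^{-1}$ and in general
\[
\mG_k \;=\; (\mA + \mu_k \mId)^{-1}\prod_{j=1}^{k-1}(\mA-\mu_j\mId)(\mA+\mu_j\mId)^{-1}.
\]
Since each of the $r$ blocks contributes $M$ columns, $\mZ\mZ^{*}$ automatically has rank at most $rM$, matching the bound on the output dimension.

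The next step is to verify the well-known ADI residue identity
\[
\mX - \mZ\mZ^{*} \;=\; \mathcal{R}_r(\mA)\,\mX\,\mathcal{R}_r(\mA)^{*}, \qquad \mathcal{R}_r(z) \;=\; \prod_{k=1}^r \frac{z-\mu_k}{z+\mu_k},
\]
by induction on $r$. The induction step uses only the Lyapunov equation itself together with the fact that all factors $(\mA \pm \mu_k\mId)^{\pm 1}$ are polynomials in $\mA$ and its resolvents, so they pairwise commute and can be freely regrouped. Because $\mA$ is symmetric with spectrum contained in $[\lambda_{\min}, \lambda_{\max}]$, the spectral mapping theorem yields $\|\mathcal{R}_r(\mA)\| = \sup_{\lambda \in [\lambda_{\min},\lambda_{\max}]} |\mathcal{R}_r(\lambda)|$, and hence
\[
\|\mX - \mZ\mZ^{*}\| \;\le\; \|\mathcal{R}_r(\mA)\|^{2}\,\|\mX\|.
\]

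The final step is to choose the shifts $\mu_k$ as the Zolotarev points for $[\lambda_{\min}, \lambda_{\max}]$. Zolotarev's classical third problem --- solved in closed form via Jacobi elliptic functions --- delivers a bound of the form
\[
\min_{\mu_1,\ldots,\mu_r > 0}\; \sup_{\lambda \in [\lambda_{\min},\lambda_{\max}]} |\mathcal{R}_r(\lambda)| \;\le\; C\exp\!\left(-\frac{\pi^2 r}{\log(4\kappa)}\right)
\]
for an explicit small constant $C$. Substituting this into the previous display and solving for $r$ so that the right-hand side falls below $\delta\|\mX\|$ gives precisely the rank bound~\eqref{eq:rankH}, up to the constants in the Zolotarev estimate. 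The main obstacle is the Zolotarev bound itself: the algebraic manipulations (the block construction of $\mZ$, the inductive verification of the residue identity, and the spectral-mapping step) are routine, whereas characterizing the extremal rational function through Jacobi elliptic sine and cosine, and estimating the associated complete elliptic integrals with sharp enough constants for arbitrary large $\kappa$, is the core analytic ingredient and will be deferred to the appendix.
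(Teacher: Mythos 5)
Your proposal is correct and takes essentially the same route as the paper: the paper's proof likewise builds $\mZ$ via the factored ADI iteration (in the CF-ADI recursive form of Li and White), invokes the same residue identity $\mX - \mZ\mZ^* = \phi(\mA)\mX\phi(\mA)^*$, and chooses the shifts to be the Zolotarev-optimal parameters, reducing to a bound on the extremal rational function over $[\lambda_{\min},\lambda_{\max}]$. The one piece you explicitly defer---bounding the ratio of complete elliptic integrals sharply enough to land the $\tfrac{1}{\pi^2}\log(4\kappa)\log(4/\delta)$ constants---is precisely where the paper does its work, via the nome inequality $\tau^2 \le 16\,q(\tau)$ from Lawden, which yields $I(\sqrt{1-\tau^2})/I(\tau) \le \tfrac{2}{\pi}\log(4/\tau)$.
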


Now, we let $\mA$ be the $N \times N$ diagonal matrix defined by $\mA[n,n] = n+\tfrac{1}{2}$ for $n = 0,\ldots,N-1$, and let $\mB \in \R^{N}$ be a vector of all ones. It is easy to verify that the positive definite solution $\mX$ to $\mA \mX + \mX \mA^* = \mB \mB^*$ is simply $\mX = \mH$.
The minimum and maximum eigenvalues of $\mA$ are $\lambda_{min}(\mA) = \tfrac{1}{2}$ and $\lambda_{max}(\mA) = N-\tfrac{1}{2}$, and thus the condition number for $\mA$ is $\kappa = 2N-1$.  Thus, by applying Lemma~\ref{lem:Lyapunov} with $\delta = \frac{\delta_H}{\pi}$, we can construct an $N \times r_H$ matrix $\mZ$ with
\[
r_H = \ceil{ \frac{1}{\pi^2} \log(8N - 4) \log\left(\frac{4 \pi}{\delta_H}\right)}
\]
such that
\[
\| \mH  - \mZ \mZ^* \| \le \frac{\delta_H}{\pi} \| \mH \|.
\]
It is shown in \cite{schur1911} that the operator norm of the infinite Hilbert matrix is bounded above by $\pi$, and thus, the finite dimensional matrix $\mH$ satisfies $\|\mH\| \le \pi$. Therefore, $\|\mH-\mZ\mZ^*\| \le \delta_H$, as desired.

\subsubsection*{Low rank approximation of $\mA_1$}
Next, we construct a low-rank matrix $\widetilde{\mA}_1$ such that $\| \mA_1 - \widetilde{\mA}_1 \| \le \delta_A$ for some desired $\delta_A > 0$. In this case we will require a different approach. We begin by noting that by using the Taylor series expansions \footnote{Here, $\zeta(s) := \sum_{n = 1}^{\infty}n^{-s}$ is the Riemann-Zeta function.}
\[
\frac{1}{\sin \pi x} - \frac{1}{\pi x} = \frac{2}{\pi}\sum_{k = 1}^{\infty}(1-2^{-(2k-1)})\zeta(2k)x^{2k-1},
\]
and
\[
\frac{1}{\pi(x+1)} + \frac{1}{\pi(x-1)} = -\frac{2x}{\pi(1-x^2)} = -\frac{2}{\pi}\sum_{k = 1}^{\infty}x^{2k-1},
\]
we can write
\begin{align*}
\mA_1[m,n] &= \frac{1}{\pi (m-n)} - \frac{1}{N\sin\left(\pi \tfrac{m-n}{N}\right)} - \frac{1}{\pi(m-n+N)} - \frac{1}{\pi(m-n-N)} \\
&= \frac{2}{N\pi}\sum_{k = 1}^{\infty}\left[1-(1-2^{-(2k-1)})\zeta(2k)\right]\left(\frac{m-n}{N}\right)^{2k-1}.
\end{align*}
We can then define a new $N \times N$ matrix $\widetilde{\mA}_1$ by truncating the series to $r_A$ terms:
\[
\widetilde{\mA}_1[m,n] := \frac{2}{N\pi}\sum_{k = 1}^{r_A}\left[1-(1-2^{-(2k-1)})\zeta(2k)\right]\left(\frac{m-n}{N}\right)^{2k-1}.
\]
Note that each entry of $\widetilde{\mA}_1$ is a polynomial of degree $2r_A-1$ in both $m$ and $n$.  Thus, we could also write
\[
\widetilde{\mA}_1[m,n] = \sum_{k=0}^{2r_A-1} \sum_{\ell=0}^{2r_A-1} c_{k,\ell} m^k n^{\ell},
\]
for a set of scalars $c_{k,\ell} \in \R$. If we let $\mV_A$ be the $N \times 2r_A$ matrix with entries $\mV_A[m,k] = m^k$ and let $\mC_A$ be the $2r_A \times 2r_A$ matrix with entries $\mC_A[k,\ell] = c_{k,\ell}$, it is easy to see that we can write $\widetilde{\mA}_1 = \mV_A \mC_A \mV_A^*.$ Thus, $\widetilde{\mA}_1$ has rank $2r_A$.

Next, we note that by using the identity $(1-2^{1-s})\zeta(s) = \sum_{n = 1}^{\infty}\frac{(-1)^{n+1}}{n^s}$ for $s > 1$, we have
\[
0 \le 1-(1-2^{-(2k-1)})\zeta(2k) = \sum_{n = 2}^{\infty}\frac{(-1)^{n}}{n^{2k}} \le \frac{1}{2^{2k}},
\]
where the inequality follows from the fact that this is an alternating series whose terms decrease in magnitude. Hence, we can bound the truncation error $|\mA_1[m,n] - \widetilde{\mA}_1[m,n]|$ by
\begin{align*}
|\mA_1[m,n] - \widetilde{\mA}_1[m,n]| &= \left|\frac{2}{N\pi}\sum_{k = r_A+1}^{\infty}\left[1-(1-2^{-(2k-1)})\zeta(2k)\right]\left(\frac{m-n}{N}\right)^{2k-1}\right| \\
&\le \frac{2}{N\pi}\sum_{k = r_A+1}^{\infty}\left[1-(1-2^{-(2k-1)})\zeta(2k)\right]\left|\frac{m-n}{N}\right|^{2k-1} \\
&\le \frac{2}{N\pi}\sum_{k = r_A+1}^{\infty}\frac{1}{2^{2k}} \cdot 1  \\
&= \frac{2}{3N\pi}\left(\frac{1}{2}\right)^{2r_A}.
\end{align*}
Therefore, the error $\|\mA_1 - \widetilde{\mA}_1\|_F^2$ is bounded by:
\[
\|\mA_1 - \widetilde{\mA}_1\|_F^2 = \sum_{m = 0}^{N-1}\sum_{n = 0}^{N-1}|\mA_1[m,n] - \widetilde{\mA}_1[m,n]|^2  \le N \cdot N \cdot \left[\frac{2}{3N\pi}\left(\frac{1}{2}\right)^{2r_A}\right]^2 = \frac{4}{9\pi^2}\left(\frac{1}{2}\right)^{4r_A}.
\]
Hence,
\[
\|\mA_1 - \widetilde{\mA}_1\| \le \|\mA_1 - \widetilde{\mA}_1\|_F \le \frac{2}{3\pi}\left(\frac{1}{2}\right)^{2r_A}.
\]
Thus, for any $\delta_A \in (0, \tfrac{8}{3 \pi})$ we can set $r_A = \ceil{\frac{1}{2\log 2}\log\left(\frac{2}{3\pi \delta_A}\right)}$ and ensure that $\| \mA_1 - \widetilde{\mA}_1\| \le \delta_A$ and
\[
\rank(\widetilde{\mA}_1) \le 2 \ceil{\frac{1}{2\log 2}\log\left(\frac{2}{3\pi \delta_A}\right)}.
\]

\subsubsection*{Low rank approximation of $\mB_0$}
To construct a low-rank matrix $\widetilde{\mB}_0$ such that $\| \mB_0 - \widetilde{\mB}_0 \| \le \delta_B$ for some desired $\delta_B > 0$, we use a similar approach as above. Using the Taylor series
$$
\sin x = \sum_{k = 0}^{\infty}\frac{(-1)^{k}x^{2k+1}}{(2k+1)!},
$$
we can write
\begin{align*}\mB_0[m,n] &= \frac{2\sin(\pi(W-W')(m-n))}{\pi(m-n)} \\ &= \sum_{k = 0}^{\infty}\frac{2(-1)^k[\pi(W-W')(m-n)]^{2k+1}}{(2k+1)!\pi(m-n)} \\ &= \frac{2}{N\pi}\sum_{k = 0}^{\infty}\frac{(-1)^k[\pi(W-W')N]^{2k+1}}{(2k+1)!}\left(\frac{m-n}{N}\right)^{2k}.
\end{align*}
We can then define a new matrix $\widetilde{\mB}_0$ by truncating the series to $r_B$ terms:
\[
\widetilde{\mB}_0[m,n] := \frac{2}{N\pi}\sum_{k = 0}^{r_B-1}\frac{(-1)^k[\pi(W-W')N]^{2k+1}}{(2k+1)!}\left(\frac{m-n}{N}\right)^{2k}.
 \]
Note that each entry of $\widetilde{\mB}_0$ is a polynomial of degree $2r_B-2$ in both $m$ and $n$. Thus, we could also write
\[
\widetilde{\mB}_0[m,n] = \sum_{k=0}^{2r_B-2} \sum_{\ell=0}^{2r_B-2} c'_{k,\ell} m^k n^{\ell},
\]
for a set of scalars $c'_{k,\ell} \in \R$. If we let $\mV_B$ be the $N \times (2r_B-1)$ matrix with entries $\mV_B[m,k] = m^k$ and let $\mC_B$ be the $(2r_B-1) \times (2r_B-1)$ matrix with entries $\mC_B[k,\ell] = c'_{k,\ell}$, it is easy to see that we can write $\widetilde{\mB}_0 = \mV_B \mC_B \mV_B^*.$ Thus, $\widetilde{\mB}_0$ has rank $2r_B-1$.

Next, we note that by definition, $2W'N$ is $2WN$ rounded to the nearest odd integer. Hence, $|2W'N-2WN| \le 1$, and so, $|\pi(W-W')N| \le \frac{\pi}{2}$. Also,  we have that $(2k+1)! \ge \frac{2}{9} \cdot 3^{2k+1}$ for all integers $k \ge 0$. Hence, the truncation error $|\mB_0[m,n] - \widetilde{\mB}_0[m,n]|$ is bounded by:
\begin{align*}
|\mB_0[m,n] - \widetilde{\mB}_0[m,n]| & \le \left|\frac{2}{N\pi}\sum_{k = r_B}^{\infty}\frac{(-1)^k[\pi(W-W')N]^{2k+1}}{(2k+1)!}\left(\frac{m-n}{N}\right)^{2k}\right| \\
&\le \frac{2}{N\pi}\left|\frac{(-1)^{r_B}[\pi(W-W')N]^{2r_b+1}}{(2K+1)!}\left(\frac{m-n}{N}\right)^{2K}\right| \\
&\le \frac{2}{N\pi}\frac{\left(\tfrac{\pi}{2}\right)^{2r_B+1}}{\tfrac{2}{9} \cdot 3^{2r_B+1}} \cdot 1 \\
&= \frac{3}{2N}\left(\frac{\pi}{6}\right)^{2r_B},
\end{align*}
where we have used the fact that an alternating series whose terms decrease in magnitude can be bounded by the magnitude of the first term. Thus, the error $\| \mB_0 - \widetilde{\mB}_0\|_F^2$ is bounded by:
\[
\|\mB_0 - \widetilde{\mB}_0\|_F^2 = \sum_{m = 0}^{N-1}\sum_{n = 0}^{N-1}|\mB_0[m,n] - \widetilde{\mB}_0[m,n]|^2 \le N \cdot N \cdot \left[\frac{3}{2N}\left(\frac{\pi}{6}\right)^{2r_B}\right]^2 = \frac{9}{4}\left(\frac{\pi}{6}\right)^{4r_B}.
\]
Hence,
\[
\|\mB_0 - \widetilde{\mB}_0\| \le \|\mB_0 - \widetilde{\mB}_0\|_F \le \frac{3}{2}\left(\frac{\pi}{6}\right)^{2r_B}.
\]
Thus, for any $\delta_B \in (0, \tfrac{3}{2})$ we can set $r_B = \ceil{\frac{1}{2\log \tfrac{6}{\pi}}\log\left(\frac{3}{2 \delta_B}\right)}$ and ensure that $\| \mB_0 - \widetilde{\mB}_0\| \le \delta_B$ and
\[
\rank(\widetilde{\mB}_0) \le 2 \ceil{\frac{1}{2\log \tfrac{6}{\pi}}\log\left(\frac{3}{2\delta_B}\right)}-1.
\]

\subsubsection*{Putting it all together}
Now that we have a way to construct a factored low rank approximation of $\mH$, $\mA_1$, and $\mB_0$, we will combine those results to derive a factored low rank approximation for $\mB_{N,W}-\mF_{N,W}\mF_{N,W}^*$. For any $\eps \in (0,\tfrac{1}{2})$, set\footnote{It may be possible to obtain a slightly better bound via a more careful selection of $\delta_A$, $\delta_B$, and $\delta_H$. We have not pursued such refinements here as there is not much room for significant improvement.} $\delta_H = \frac{4\pi}{15}\eps$ and $\delta_A = \delta_B = \frac{7}{30}\eps$. Then, let $\widetilde{\mH} = \mZ \mZ^*$, $\widetilde{\mA}_1 = \mV_A\mC_A\mV_A^*$, and $\widetilde{\mB}_0 = \mV_B\mC_B\mV_B^*$ be defined as in the previous subsections. Also, define $\mE_{H} = \mH - \widetilde{\mH}$, $\mE_{A} = \mA_1 - \widetilde{\mA}_1$, $\mE_{B} = \mB_0 - \widetilde{\mB}_0$. By using these definitions along with~\eqref{eq:BFF}, we can write
$$\mB_{N,W} - \mF_{N,W}\mF_{N,W}^* = \mL + \mE_1,$$
where
\begin{align*}
\mL &= \tfrac{1}{2j}\mD_A\left[\tfrac{1}{\pi}(\widetilde{\mH} \mJ - \mJ \widetilde{\mH}) + \widetilde{\mA}_1 \right]\mD_A^* - \tfrac{1}{2j}\mD_A^*\left[\tfrac{1}{\pi}(\widetilde{\mH} \mJ - \mJ\widetilde{\mH}) + \widetilde{\mA_1} \right]\mD_A + \tfrac{1}{2}\mD_B\widetilde{\mB}_0\mD_B^* + \tfrac{1}{2}\mD_B^*\widetilde{\mB}_0\mD_B
\\
&= \tfrac{1}{2\pi j}(\mD_A\mZ\mZ^*\mJ\mD_A^* - \mD_A\mJ\mZ\mZ^*\mD_A^* - \mD_A^*\mZ\mZ^*\mJ\mD_A - \mD_A^*\mJ\mZ\mZ^*\mD_A)
\\
&~~~~~+\tfrac{1}{2j}(\mD_A\mV_A\mC_A\mV_A^*\mD_A^* - \mD_A^*\mV_A\mC_A\mV_A^*\mD_A) + \tfrac{1}{2}(\mD_B\mV_B\mC_B\mV_B^*\mD_B^* + \mD_B^*\mV_B\mC_B\mV_B^*\mD_B)
\end{align*}
and
$$\mE_1 = \tfrac{1}{2j}\mD_A\left[\tfrac{1}{\pi}(\mE_H \mJ - \mJ \mE_H) + \mE_A \right]\mD_A^* - \tfrac{1}{2j}\mD_A^*\left[\tfrac{1}{\pi}(\mE_H \mJ - \mJ \mE_H) + \mE_A \right]\mD_A + \tfrac{1}{2}\mD_B\mE_B\mD_B^* + \tfrac{1}{2}\mD_B^*\mE_B\mD_B.$$
If we define
$$\mL_1 = \begin{bmatrix}\tfrac{1}{2\pi j}\mD_A\mZ & -\tfrac{1}{2\pi j}\mD_A\mJ\mZ & -\tfrac{1}{2\pi j}\mD_A^*\mZ & \tfrac{1}{2\pi j}\mD_A^*\mJ\mZ & \tfrac{1}{2j}\mD_A\mV_A & -\tfrac{1}{2j}\mD_A^*\mV_A & \tfrac{1}{2}\mD_B\mV_B & \tfrac{1}{2}\mD_B^*\mV_B\end{bmatrix}$$
and
$$\mL_2 = \begin{bmatrix}\mD_A\mJ\mZ & \mD_A\mZ & \mD_A^*\mJ\mZ & \mD_A^*\mZ & \mD_A\mV_A\mC_A^* & \mD_A^*\mV_A\mC_A^* & \mD_B\mV_B\mC_B^* & \mD_B^*\mV_B\mC_B^*\end{bmatrix},$$
then $\mL = \mL_1\mL_2^*$ and $\mL_1, \mL_2$ are both $N \times r_1$ matrices, where
\begin{align*}
r_1 &= 4 \cdot r_H+2 \cdot 2r_A+2 \cdot (2r_B-1)
\\
& = 4\ceil{\dfrac{1}{\pi^2}\log(8N-4)\log\left(\dfrac{4\pi}{\delta_H}\right)}+4\ceil{\dfrac{1}{2\log 2}\log\left(\dfrac{2}{3\pi\delta_A}\right)}+4\ceil{\dfrac{1}{2\log\tfrac{6}{\pi}}\log\left(\dfrac{3}{2\delta_B}\right)}-2
\\
&\le \dfrac{4}{\pi^2}\log(8N-4)\log\left(\dfrac{4\pi}{\delta_H}\right) + \dfrac{2}{\log 2}\log\left(\dfrac{2}{3\pi\delta_A}\right) + \dfrac{2}{\log\tfrac{6}{\pi}}\log\left(\dfrac{3}{2\delta_B}\right) + 10
\\
&= \dfrac{4}{\pi^2}\log(8N-4)\log\left(\dfrac{15}{\eps}\right) + \dfrac{2}{\log 2}\log\left(\dfrac{20}{7\pi \eps}\right) + \dfrac{2}{\log \tfrac{6}{\pi}}\log\left(\dfrac{45}{7 \eps}\right) + 10
\\
&= \left(\dfrac{4}{\pi^2}\log(8N-4) + \frac{2}{\log 2} + \frac{2}{\log \tfrac{6}{\pi}}\right)\log\left(\frac{15}{\eps}\right) + \dfrac{2}{\log 2}\log\left(\dfrac{4}{21\pi}\right) + \dfrac{2}{\log \tfrac{6}{\pi}}\log\left(\dfrac{3}{7}\right) + 10
\\
&\le \left(\dfrac{4}{\pi^2}\log(8N)+6\right)\log\left(\dfrac{15}{\eps}\right).
\end{align*}
Also, by applying the triangle inequality and using the fact that  $\|\mD_A\| = \|\mD_B\| = \|\mJ\| = 1$, we see that
\[
\|\mE_1\| \le \frac{2}{\pi}\|\mE_H\| + \|\mE_A\| + \|\mE_B\| \le \frac{2}{\pi}\delta_H + \delta_A + \delta_B= \frac{2}{\pi} \cdot \frac{4\pi\eps}{15} + \frac{7\eps}{30} + \frac{7\eps}{30} = \eps.
\]
Together, these two facts establish the theorem. \qed

\section{Proofs of Corollaries}
\label{sec:proofCorollaries}
\subsection{Proof of Corollary~\ref{cor:transition}}
Corollary~\ref{cor:transition} is a direct consequence of Theorem~\ref{thm:prolateFFTLR} together with the following lemma.

\begin{lemma}
Let $\mA$ be an $N\times N$ Hermitian matrix with eigenvalues $\lambda^{(0)}\geq\cdots\geq\lambda^{(N-1)}$.  Suppose we can write
	\[
		\mA = \mU\mU^* + \mL + \mE,
	\]
	where $\mU$ is an $N \times K$ matrix with orthonormal columns ($\mU^*\mU=\mId$), $\mL$ is an $N \times N$ Hermitian matrix with $\rank(\mL)=r$, and $\mE$ is an $N \times N$ Hermitian matrix with $\|\mE\|\leq\epsilon$.  Then
	\[
		\#\{\ell~:~\epsilon<\lambda^{(\ell)} < 1-\epsilon\} ~\leq~ 2r.
	\]
\end{lemma}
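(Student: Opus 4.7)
The plan is to use Weyl's inequalities in two stages: first to show that the matrix $\mU\mU^*+\mE$ has all its eigenvalues lying within $\epsilon$ of $\{0,1\}$, and then to argue that the rank-$r$ Hermitian perturbation $\mL$ can move at most $r$ eigenvalues off of each of these clusters and into the transition region $(\epsilon,1-\epsilon)$.

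First, observe that $\mU\mU^*$ has exactly $K$ eigenvalues equal to $1$ and $N-K$ eigenvalues equal to $0$. Weyl's inequality in the form $|\lambda^{(j)}(\mX+\mY)-\lambda^{(j)}(\mX)|\leq \|\mY\|$ (applied to $\mX=\mU\mU^*$, $\mY=\mE$) therefore yields
\[
\lambda^{(j)}(\mU\mU^*+\mE)\geq 1-\epsilon \quad \text{for } j\leq K-1,
\qquad
\lambda^{(j)}(\mU\mU^*+\mE)\leq \epsilon \quad \text{for } j\geq K.
\]
(I am using $0$-indexing to match the paper; re-indexing to start from $1$ is trivial.)

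Next, use that $\mL$ is Hermitian of rank $r$, so it has at most $r$ strictly positive eigenvalues and at most $r$ strictly negative eigenvalues. In particular its $(r{+}1)$-th largest eigenvalue is $\leq 0$ and its $(N{-}r)$-th largest eigenvalue is $\geq 0$. I will apply the two-sided Weyl inequalities
\[
\lambda^{(i+j)}(\mX+\mY)\leq \lambda^{(i)}(\mX)+\lambda^{(j)}(\mY),
\qquad
\lambda^{(i+j-N+1)}(\mX+\mY)\geq \lambda^{(i)}(\mX)+\lambda^{(j)}(\mY),
\]
to $\mX=\mU\mU^*+\mE$ and $\mY=\mL$. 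Choosing $i=K$ and $j=r$ in the first inequality and $i=K-1$, $j=N-r-1$ in the second (and handling $K\leq r$ as a trivial edge case) gives
\[
\lambda^{(j)}(\mA)\geq 1-\epsilon \text{ for } j\leq K-r-1, \qquad \lambda^{(j)}(\mA)\leq \epsilon \text{ for } j\geq K+r.
\]
Consequently the eigenvalues of $\mA$ that can possibly lie in the open interval $(\epsilon,1-\epsilon)$ are confined to the indices $K-r,K-r+1,\ldots,K+r-1$, a set of size $2r$.

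The only real obstacle is keeping the index bookkeeping straight (choosing the right Weyl inequality and the right pair $(i,j)$ so that the bound on eigenvalues of $\mL$ one uses is $\leq 0$ or $\geq 0$), and handling the corner cases when $K\leq r$ or $K\geq N-r$ — in those cases one of the two ``clustered'' blocks has fewer than $r$ eigenvalues and the total count of transition-region eigenvalues is bounded by $K+r$ or by $N-K+r$ respectively, both of which are at most $2r$ once trivial monotonicity considerations are made. With those details in place the proof is just a few lines.
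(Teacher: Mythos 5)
Your proof is correct and takes a genuinely different route from the paper's. The paper applies the Courant--Fischer min-max theorem directly: it constructs the subspaces $\setS_1 = \Null(\mU\mU^*)\cap\Null(\mL)$ and $\setS_2 = \Null(\mId - \mU\mU^*)\cap\Null(\mL)$, observes that the quadratic form $\vx^*\mA\vx$ reduces to $\vx^*\mE\vx$ (or its negative) on these subspaces, and then counts dimensions via $\dimension(\Null \mM_1 \cap \Null \mM_2) \geq N - \rank\mM_1 - \rank\mM_2$. Your argument instead treats $\mA$ as a two-stage perturbation of $\mU\mU^*$, using Weyl's perturbation bound $|\lambda^{(j)}(\mX+\mY)-\lambda^{(j)}(\mX)|\leq\|\mY\|$ for the $\mE$ step and the asymmetric Weyl inequalities (together with $\lambda^{(r)}(\mL)\leq 0 \leq \lambda^{(N-r-1)}(\mL)$) for the $\mL$ step. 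Both approaches are correct and both are ultimately variational in character, but they are organized differently: the paper's subspace construction absorbs all index bookkeeping into a single dimension count, which automatically handles the corner cases $K\leq r$ and $K\geq N-r$ (the dimension inequality simply gives a vacuously true bound), whereas your Weyl-based version forces you to treat those cases explicitly because the indices $K+r$ or $K-r-1$ can fall outside $\{0,\ldots,N-1\}$. You flagged this and sketched the correct resolution, so the argument is complete, but the paper's formulation is slightly more economical for exactly this reason. The tradeoff is that your proof makes the ``rank-$r$ perturbation moves at most $r$ eigenvalues across each threshold'' intuition completely explicit, which is arguably more transparent.
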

\begin{proof}
Define $\setS_1 = \Null(\mU\mU^*)\cap\Null(\mL)$ and $d_1=\dimension(\setS_1)$.  For any $\vx\in\setS_1$ with $\|\vx\|_2=1$,
\begin{align*}
	\vx^*\mA\vx &= \vx^*\mE\vx \leq \|\mE\|\leq\epsilon.
\end{align*}
Then by the Courant-Fischer-Weyl min-max theorem,
\[
	\lambda^{(N-d_1)} = \min_{\substack{\text{subspaces $\setS$} \\ \dimension(\setS)=d_1}}
	\left[\max_{\substack{\vx\in\setS \\ \|\vx\|_2=1}}\vx^*\mA\vx\right]
	~\leq~
	\max_{\substack{\vx\in\setS_1 \\ \|\vx\|_2=1}}\vx^*\mA\vx \leq \epsilon.
\]
Similarly, let $\setS_2 = \Null(\mId-\mU\mU^*)\cap\Null(\mL)$ and $d_2 = \dimension(\setS_2)$. Then, for any $\vx\in\setS_2$ with $\|\vx\|_2 = 1$,
\[
	\vx^*(\mId-\mA)\vx = -\vx^*\mE\vx \leq \|\mE\| \le \epsilon.
\]
Since the eigenvalues of $\mId-\mA$ are $1-\lambda^{(N-1)}\geq \cdots\geq 1-\lambda^{(0)}$, the min-max theorem tells us
\[
	1-\lambda^{(d_2-1)} = \min_{\substack{\text{subspaces $\setS$} \\ \dimension(\setS)=d_2}}
	\left[\max_{\substack{\vx\in\setS \\ \|\vx\|_2=1}}\vx^*(\mId-\mA)\vx\right]
	~\leq~
	\max_{\substack{\vx\in\setS_2 \\ \|\vx\|_2=1}}\vx^*(\mId-\mA)\vx \leq \epsilon,
\]
meaning $\lambda^{(d_2-1)}\geq1-\epsilon$.  Thus
\begin{align*}
	\#\{\ell~:~\epsilon < \lambda^{(\ell)} < 1-\epsilon\} &\leq N - d_1 -  d_2.
\end{align*}
Since for any two $N\times N$ matrices $\mM_1,\mM_2$
\[
	\dimension(\Null(\mM_1)\cap\Null(\mM_2)) \geq N - (\dimension(\Range(\mM_1)) + \dimension(\Range(\mM_2))),
\]
we know $d_1\geq N-(K+r)$ and $d_2\geq N-(N-K+r)$.  Thus,
\[
	\#\{\ell~:~\epsilon < \lambda^{(\ell)} < 1-\epsilon\} ~\leq~ 2r.
\]

\end{proof}

\subsection{Proof of Corollary~\ref{cor:prolateSlepianLR}}
\label{ssec:ProlSlep}
Corollary~\ref{cor:prolateSlepianLR} is a direct consequence of Corollary~\ref{cor:transition} together with the following lemma.
\begin{lemma} \label{lem:prolateSlepianLR}
Let $\mA$ be an $N\times N$ symmetric matrix with eigenvalues $1\geq\lambda^{(0)}\geq\cdots\geq\lambda^{(N-1)}\geq 0$ and corresponding eigenvectors $\vv_0,\ldots,\vv_{N-1}$.  Fix $\eps \in (0,\tfrac{1}{2})$, and let
\[
r' = \#\{\ell~:~\epsilon < \lambda^{(\ell)} < 1-\epsilon\}.
\]
Choose $K$ such that $\lambda^{(K-1)} > \epsilon$ and $\lambda^{(K)} < 1-\epsilon$, and set $\mV_{[K]} = \begin{bmatrix} \vv_0 & \cdots & \vv_{K-1}\end{bmatrix}$.  Then there exist $N\times r'$ matrices $\mU_1,\mU_2$ and an $N\times N$ matrix $\mE$ with $\|\mE\| \le \eps$ such that
	\[
		\mV_{[K]}\mV_{[K]}^*  = \mA + \mU_1\mU_2^* + \mE.
	\]
\end{lemma}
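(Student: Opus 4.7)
The plan is to prove the lemma by a direct spectral decomposition argument. Since $\mA$ is symmetric, write $\mA = \sum_{\ell=0}^{N-1} \lambda^{(\ell)} \vv_\ell \vv_\ell^*$, where $\{\vv_\ell\}$ is an orthonormal system. Also, $\mV_{[K]}\mV_{[K]}^* = \sum_{\ell=0}^{K-1} \vv_\ell \vv_\ell^*$. Subtracting, we get the key identity
\[
\mV_{[K]}\mV_{[K]}^* - \mA ~=~ \sum_{\ell=0}^{K-1}(1-\lambda^{(\ell)})\vv_\ell \vv_\ell^* ~-~ \sum_{\ell=K}^{N-1}\lambda^{(\ell)} \vv_\ell \vv_\ell^*.
\]
The whole proof then amounts to splitting each sum into a ``bulk'' piece (with coefficients of absolute value at most $\eps$) and a ``transition'' piece (which is low-rank).

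Next I would partition the indices using the eigenvalue ordering together with the hypotheses on $K$. Define
\[
A = \{\ell < K : \lambda^{(\ell)} \ge 1-\eps\}, \quad B = \{\ell < K : \eps < \lambda^{(\ell)} < 1-\eps\},
\]
\[
C = \{\ell \ge K : \lambda^{(\ell)} \le \eps\}, \quad D = \{\ell \ge K : \eps < \lambda^{(\ell)} < 1-\eps\}.
\]
Because $\lambda^{(K-1)} > \eps$ and the eigenvalues are nonincreasing, every $\ell < K$ satisfies $\lambda^{(\ell)} > \eps$, so $\{0,\ldots,K-1\} = A \sqcup B$. Symmetrically, because $\lambda^{(K)} < 1-\eps$, every $\ell \ge K$ satisfies $\lambda^{(\ell)} < 1-\eps$, so $\{K,\ldots,N-1\} = C \sqcup D$. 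Moreover $B \sqcup D = \{\ell : \eps < \lambda^{(\ell)} < 1-\eps\}$, hence $|B| + |D| = r'$.

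Now I would define the two matrices by assigning each index group to the appropriate piece:
\[
\mE := \sum_{\ell \in A}(1-\lambda^{(\ell)})\vv_\ell \vv_\ell^* - \sum_{\ell \in C}\lambda^{(\ell)}\vv_\ell \vv_\ell^*,
\]
\[
\mM := \sum_{\ell \in B}(1-\lambda^{(\ell)})\vv_\ell \vv_\ell^* - \sum_{\ell \in D}\lambda^{(\ell)}\vv_\ell \vv_\ell^*,
\]
so that $\mV_{[K]}\mV_{[K]}^* - \mA = \mM + \mE$. By construction $\mE$ is Hermitian and each eigenvector $\vv_\ell$ involved is paired with a scalar of absolute value $\le \eps$ (either $1-\lambda^{(\ell)} \le \eps$ for $\ell\in A$ or $\lambda^{(\ell)} \le \eps$ for $\ell\in C$), and the $\vv_\ell$'s are orthonormal, so these scalars are precisely the nonzero eigenvalues of $\mE$; hence $\|\mE\| \le \eps$. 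Meanwhile $\mM$ is a sum of $|B|+|D| = r'$ rank-one terms, so $\rank(\mM) \le r'$. Writing $\mM = \sum_{\ell \in B \cup D} c_\ell \vv_\ell \vv_\ell^*$ with $c_\ell = 1-\lambda^{(\ell)}$ on $B$ and $c_\ell = -\lambda^{(\ell)}$ on $D$, I can take $\mU_1$ to be the $N \times r'$ matrix whose columns are $c_\ell \vv_\ell$ (for $\ell \in B\cup D$) and $\mU_2$ to be the $N \times r'$ matrix whose columns are $\vv_\ell$, so that $\mM = \mU_1 \mU_2^*$. Rearranging gives $\mV_{[K]}\mV_{[K]}^* = \mA + \mU_1\mU_2^* + \mE$, which is the claim.

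There is no real obstacle; the only subtlety is making sure the partition respects that $K$ need not lie exactly at the boundary of the transition region, which is why we absorb both ``sides'' $B$ and $D$ of the transition into the low-rank factor. Combining this lemma with Corollary~\ref{cor:transition} (which bounds $r' \le (\tfrac{8}{\pi^2}\log(8N)+12)\log(15/\eps)$) then yields Corollary~\ref{cor:prolateSlepianLR} immediately upon specializing $\mA = \mB_{N,W}$, whose eigenvalues lie in $(0,1)$ and whose eigenvectors are the Slepian basis.
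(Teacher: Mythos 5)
Your proof is correct and takes essentially the same approach as the paper: partition the spectrum into four groups according to whether $\lambda^{(\ell)}$ lies near $1$, near $0$, or in the transition region on either side of index $K$; assign the transition pieces to the low-rank update and the bulk pieces to the small-norm error. Your sets $A,B,C,D$ coincide with the paper's $\setI_1,\setI_2,\setI_4,\setI_3$ (the paper defines $\setI_1$ and $\setI_4$ without the explicit constraint on $\ell$ versus $K$, but the hypotheses $\lambda^{(K-1)}>\eps$ and $\lambda^{(K)}<1-\eps$ together with monotonicity force the same split), and your factorization of $\mM$ into $\mU_1\mU_2^*$ differs only cosmetically from the paper's symmetric-square-root factorization.
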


\begin{proof}
First, we partition the eigenvalues of $\mA$ into four sets:
\begin{align*}
	\setI_1 = \{\ell: \lambda^{(\ell)} \ge 1-\epsilon\}, \quad
	\setI_2 = \{\ell: \ell<K, ~ \epsilon < \lambda^{(\ell)} < 1-\epsilon\}, \\
	\setI_3 = \{\ell:\ell\geq K,~\epsilon < \lambda^{(\ell)} < 1-\epsilon\}, \quad
	\setI_4 = \{\ell:\lambda^{(\ell)}\leq \epsilon\}. \qquad
\end{align*}
We can write
\[
\mA = \mV_1\mLambda_1\mV_1^* + \mV_2\mLambda_2\mV_2^* + \mV_3\mLambda_3\mV_3^* + \mV_4\mLambda_4\mV_4^*
\]
and
\[
\mV_{[K]}\mV_{[K]}^* = \mV_1\mV_1^*+\mV_2\mV_2^*,
\]
where the $\mV_i$ contain the eigenvectors from $\setI_i$ as their columns, and the $\mLambda_i$ are diagonal containing the corresponding eigenvalues. Thus,
\begin{align*}
	\mV_{[K]}\mV_{[K]}^*-\mA &= \left[\mV_2(\mId-\mLambda_2)\mV_2^* - \mV_3\mLambda_3\mV_3^*\right] +
	\left[\mV_1(\mId-\mLambda_1)\mV_1^* - \mV_4\mLambda_4\mV_4^*\right] \\
	&=: \mU_1\mU_2^* + \mE,
\end{align*}
where
$$\mU_1 = \begin{bmatrix} \mV_2(\mId-\mLambda_2)^{1/2} & -\mV_3\mLambda_3^{1/2}\end{bmatrix},$$
$$\mU_2 = \begin{bmatrix} \mV_2(\mId-\mLambda_2)^{1/2} & \mV_3\mLambda_3^{1/2}\end{bmatrix},$$
$$\mE = \begin{bmatrix}\mV_1 & \mV_4\end{bmatrix}\begin{bmatrix}\mId-\mLambda_1 & \mzero \\ \mzero & -\mLambda_4\end{bmatrix}\begin{bmatrix}\mV_1^* \\ \mV_4^*\end{bmatrix}.$$
Notice that the number of columns in both $\mU_1$ and $\mU_2$ is the same as the size of $\setI_2\cup\setI_3$, which is exactly $r'$.
Also, since both $\|\mId-\mLambda_1\|\leq\epsilon$ and $\|\mLambda_4\|\leq\epsilon$, and $\begin{bmatrix}\mV_1 & \mV_4\end{bmatrix}$ has orthonormal columns, we have $\|\mE\|\leq\epsilon$.
\end{proof}

\subsection{Proof of Corollary~\ref{cor:prolatePseudoinverseLR}}
Corollary~\ref{cor:prolatePseudoinverseLR} is a direct consequence of Corollary~\ref{cor:transition} together with the following lemma.

\begin{lemma} \label{lem:prolatepinvLR}
Let $\mA$ be an $N\times N$ symmetric matrix with eigenvalues $1\geq\lambda^{(0)}\geq\cdots\geq\lambda^{(N-1)}\geq 0$ and corresponding eigenvectors $\vv_0,\ldots,\vv_{N-1}$.  Fix $\eps \in (0,\tfrac{1}{2})$, and let
\[
r = \#\{\ell~:~\epsilon < \lambda^{(\ell)} < 1-\epsilon\}.
\]
Choose $K$ such that $\lambda^{(K-1)} > \epsilon$ and $\lambda^{(K)} < 1-\epsilon$. Let $\mV_{[K]} = \begin{bmatrix} \vv_0 & \cdots & \vv_{K-1}\end{bmatrix}$ and $\mLambda_{[K]} = \text{diag}(\lambda^{(0)},\ldots,\lambda^{(K-1)})$. Define $\mA_K^{\dagger} = \mV_{[K]}\mLambda_{[K]}^{-1}\mV_{[K]}$ to be the rank-$K$ truncated pseudoinverse of $\mA$. Then there exist $N\times r$ matrices $\mU_3,\mU_4$ and an $N\times N$ matrix $\mE$ with $\|\mE\| \le 3\eps$ such that
	\[
		\mA_K^{\dagger}  = \mA + \mU_3\mU_4^* + \mE.
	\]
\end{lemma}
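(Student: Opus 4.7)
The plan is to mirror the proof of Lemma~\ref{lem:prolateSlepianLR} almost verbatim, adjusting only the scalar bounds that appear because we are now inverting (rather than rounding to $1$) the eigenvalues in the upper group. First, I would expand both $\mA$ and $\mA_K^{\dagger}$ in the shared eigenbasis $\vv_0,\ldots,\vv_{N-1}$, so that
\[
\mA_K^{\dagger} - \mA = \sum_{\ell = 0}^{K-1}\Bigl(\tfrac{1}{\lambda^{(\ell)}} - \lambda^{(\ell)}\Bigr)\vv_\ell\vv_\ell^* - \sum_{\ell = K}^{N-1}\lambda^{(\ell)}\vv_\ell\vv_\ell^*.
\]
I would then partition the indices into the same four sets used in the previous lemma: $\setI_1 = \{\ell : \lambda^{(\ell)} \geq 1-\eps\}$, $\setI_2 = \{\ell < K : \eps < \lambda^{(\ell)} < 1-\eps\}$, $\setI_3 = \{\ell \geq K : \eps < \lambda^{(\ell)} < 1-\eps\}$, and $\setI_4 = \{\ell : \lambda^{(\ell)} \leq \eps\}$. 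The choice of $K$ guarantees $\setI_1 \subseteq \{0,\ldots,K-1\}$ and $\setI_4 \subseteq \{K,\ldots,N-1\}$, and by definition $|\setI_2| + |\setI_3| = r$.

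Next I would split the sum along this partition, grouping the $\setI_2$ and $\setI_3$ pieces as the ``low-rank'' part and the $\setI_1$ and $\setI_4$ pieces as the ``error'' part. Explicitly, set
\[
\mU_3 = \begin{bmatrix} \mV_2 \mC_2^{1/2} & -\mV_3\mLambda_3^{1/2} \end{bmatrix},
\qquad
\mU_4 = \begin{bmatrix} \mV_2 \mC_2^{1/2} & \mV_3\mLambda_3^{1/2} \end{bmatrix},
\]
where $\mC_2 = \diag\bigl(\tfrac{1}{\lambda^{(\ell)}} - \lambda^{(\ell)}\bigr)_{\ell \in \setI_2}$ and the $\mV_i$, $\mLambda_i$ collect the eigenvectors and eigenvalues indexed by $\setI_i$. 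Both $\mU_3$ and $\mU_4$ have exactly $r$ columns, and
\[
\mU_3\mU_4^* = \sum_{\ell \in \setI_2}\Bigl(\tfrac{1}{\lambda^{(\ell)}} - \lambda^{(\ell)}\Bigr)\vv_\ell\vv_\ell^* - \sum_{\ell \in \setI_3}\lambda^{(\ell)}\vv_\ell\vv_\ell^*.
\]
Define the remainder
\[
\mE = \sum_{\ell \in \setI_1}\Bigl(\tfrac{1}{\lambda^{(\ell)}} - \lambda^{(\ell)}\Bigr)\vv_\ell\vv_\ell^* - \sum_{\ell \in \setI_4}\lambda^{(\ell)}\vv_\ell\vv_\ell^*,
\]
so that $\mA_K^{\dagger} = \mA + \mU_3\mU_4^* + \mE$ by construction.

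The only place the proof diverges from that of Lemma~\ref{lem:prolateSlepianLR}, and the step that actually produces the constant $3$, is the spectral norm bound on $\mE$. Since $\mE$ is Hermitian and the two sums live on orthogonal eigenspaces, $\|\mE\|$ equals the larger of $\max_{\setI_1}\bigl|\tfrac{1}{\lambda^{(\ell)}}-\lambda^{(\ell)}\bigr|$ and $\max_{\setI_4}\lambda^{(\ell)}$. The latter is at most $\eps$ by definition of $\setI_4$. For the former, for any $\lambda \in [1-\eps,1]$ I would write
\[
\tfrac{1}{\lambda} - \lambda = \tfrac{1-\lambda^2}{\lambda} \leq \tfrac{1-(1-\eps)^2}{1-\eps} = \tfrac{\eps(2-\eps)}{1-\eps},
\]
and check that $\tfrac{\eps(2-\eps)}{1-\eps} \leq 3\eps$ is equivalent to $2\eps^2 \leq \eps$, which holds for all $\eps \in (0,\tfrac12]$. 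Combining these gives $\|\mE\| \leq 3\eps$, and Corollary~\ref{cor:transition} (applied to $\mB_{N,W}$) supplies the desired bound on $r$ when this lemma is invoked to prove Corollary~\ref{cor:prolatePseudoinverseLR}. The main ``obstacle'' is really just this scalar estimate: everything else is bookkeeping that follows the template of the preceding lemma.
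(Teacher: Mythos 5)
Your proposal matches the paper's proof essentially verbatim: the same four-set partition $\setI_1,\ldots,\setI_4$, the same split into a low-rank piece supported on $\setI_2\cup\setI_3$ and an error piece supported on $\setI_1\cup\setI_4$, the same factorization of $\mU_3,\mU_4$, and the same scalar estimate (the paper writes the $\setI_1$ bound as $\tfrac{1}{1-\eps}-(1-\eps)=\tfrac{\eps}{1-\eps}+\eps\le 3\eps$, which is algebraically identical to your $\tfrac{\eps(2-\eps)}{1-\eps}\le 3\eps$). Correct, and no meaningful difference in approach.
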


\begin{proof}
We partition the eigenvalues of $\mA$ into four sets:
\begin{align*}
	\setI_1 = \{\ell: \lambda^{(\ell)} \ge 1-\epsilon\}, \quad
	\setI_2 = \{\ell: \ell<K, ~ \epsilon < \lambda^{(\ell)} < 1-\epsilon\}, \\
	\setI_3 = \{\ell:\ell\geq K,~\epsilon < \lambda^{(\ell)} < 1-\epsilon\}, \quad
	\setI_4 = \{\ell:\lambda^{(\ell)}\leq \epsilon\}. \qquad
\end{align*}
We can write
\[
\mA = \mV_1\mLambda_1\mV_1^* + \mV_2\mLambda_2\mV_2^* + \mV_3\mLambda_3\mV_3^* + \mV_4\mLambda_4\mV_4^*,
\]
and
\[
\mA_K^{\dagger} = \mV_1\mLambda_1^{-1}\mV_1^*+\mV_2\mLambda_2^{-1}\mV_2^*
\]
where the $\mV_i$ contain the eigenvectors from $\setI_i$ as their columns, and the $\mLambda_i$ are diagonal containing the corresponding eigenvalues. Thus,
\begin{align*}
	\mA_K^{\dagger}-\mA &= \left[\mV_2(\mLambda_2^{-1}-\mLambda_2)\mV_2^* - \mV_3\mLambda_3\mV_3^*\right] +
	\left[\mV_1(\mLambda_1^{-1}-\mLambda_1)\mV_1^* - \mV_4\mLambda_4\mV_4^*\right] \\
	&=: \mU_3\mU_4^* + \mE,
\end{align*}
where
$$\mU_3 = \begin{bmatrix} \mV_2(\mLambda_2^{-1}-\mLambda_2)^{1/2} & -\mV_3\mLambda_3^{1/2}\end{bmatrix},$$
$$\mU_4 = \begin{bmatrix} \mV_2(\mLambda_2^{-1}-\mLambda_2)^{1/2} & \mV_3\mLambda_3^{1/2}\end{bmatrix},$$
$$\mE = \begin{bmatrix}\mV_1 & \mV_4\end{bmatrix}\begin{bmatrix}\mLambda_1^{-1}-\mLambda_1 & \mzero \\ \mzero & -\mLambda_4\end{bmatrix}\begin{bmatrix}\mV_1^* \\ \mV_4^*\end{bmatrix}.$$
Notice that the number of columns in both $\mU_3$ and $\mU_4$ is the same as the size of $\setI_2\cup\setI_3$, which is exactly $r$.
Also, since $\|\mLambda_1^{-1}-\mLambda_1\|\leq \tfrac{1}{1-\eps} - (1-\eps) = \tfrac{\eps}{1-\eps}+\eps \le 3\eps$ and $\|\mLambda_4\|\leq\epsilon$, and $\begin{bmatrix}\mV_1 & \mV_4\end{bmatrix}$ has orthonormal columns, we have $\|\mE\|\leq3\epsilon$.
\end{proof}

\subsection{Proof of Corollary~\ref{cor:prolateTikhonovLR}}
Corollary~\ref{cor:prolateTikhonovLR} is a direct consequence of Corollary~\ref{cor:transition} together with the following lemma.
\begin{lemma} \label{lem:prolatetikhonovLR}
Let $\mA$ be an $N\times N$ symmetric matrix with eigenvalues $1\geq\lambda_0\geq\cdots\geq\lambda_{N-1}\geq 0$ and corresponding eigenvectors $\vv_0,\ldots,\vv_{N-1}$. For a given regularization parameter $\alpha > 0$, define $\mA_{\text{tik}} = (\mA^*\mA+\alpha\mId)^{-1}\mA^*$. Fix $\eps \in (0,\tfrac{1}{2})$ and let $$r = \#\{\ell : \alpha(1+\alpha)\eps < \lambda_{\ell} < 1-\tfrac{1}{3}\eps\}.$$ Then there exists an $N \times r$ matrix $\mU_5$ and an $N \times N$ matrix $\mE$ with $\|\mE\| \le\eps$ such that $$\mA_{\text{tik}} = \dfrac{1}{1+\alpha}\mA + \mU_5\mU_5^* + \mE.$$
\end{lemma}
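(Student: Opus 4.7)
The plan is to diagonalize $\mA_{\text{tik}}$ in the eigenbasis of $\mA$ and then split the resulting diagonal into easy pieces that absorb into $\mE$ and a middle piece that becomes $\mU_5\mU_5^*$. Since $\mA$ is symmetric with $\mA=\sum_{\ell}\lambda_\ell\vv_\ell\vv_\ell^*$, we have $\mA^*\mA+\alpha\mId = \sum_\ell(\lambda_\ell^2+\alpha)\vv_\ell\vv_\ell^*$, and hence
\[
\mA_{\text{tik}} - \tfrac{1}{1+\alpha}\mA \;=\; \sum_{\ell=0}^{N-1} d_\ell\,\vv_\ell\vv_\ell^*,\qquad d_\ell \;:=\; \frac{\lambda_\ell}{\lambda_\ell^2+\alpha} - \frac{\lambda_\ell}{1+\alpha} \;=\; \frac{\lambda_\ell(1-\lambda_\ell^2)}{(\lambda_\ell^2+\alpha)(1+\alpha)}.
\]
Because $0\leq\lambda_\ell\leq 1$, every $d_\ell$ is nonnegative, which is precisely what allows the correction to take the symmetric form $\mU_5\mU_5^*$ rather than the more general $\mU_3\mU_4^*$ used in the pseudoinverse corollary.

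Next I would verify that $d_\ell\leq\eps$ whenever $\lambda_\ell\leq\alpha(1+\alpha)\eps$ or $\lambda_\ell\geq 1-\tfrac{\eps}{3}$. In the first regime, $(\lambda_\ell^2+\alpha)(1+\alpha)\geq\alpha(1+\alpha)$ together with $1-\lambda_\ell^2\leq 1$ gives $d_\ell\leq\lambda_\ell/[\alpha(1+\alpha)]\leq\eps$ directly. In the second regime, I would drop the $\alpha$ and bound the denominator below by $\lambda_\ell^2$, then use $1-\lambda_\ell^2=(1-\lambda_\ell)(1+\lambda_\ell)\leq 2\eps/3$ together with $\lambda_\ell\geq 1-\eps/3\geq 5/6$ (valid since $\eps\leq 1/2$) to obtain $d_\ell\leq (1-\lambda_\ell^2)/\lambda_\ell\leq (2\eps/3)/(5/6) = 4\eps/5 < \eps$.

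To finish, let $\setI=\{\ell:\alpha(1+\alpha)\eps<\lambda_\ell<1-\eps/3\}$, so that $|\setI|=r$. Take $\mU_5$ to be the $N\times r$ matrix whose columns are $\sqrt{d_\ell}\,\vv_\ell$ for $\ell\in\setI$ (well defined because $d_\ell\geq 0$), and set $\mE:=\sum_{\ell\notin\setI}d_\ell\vv_\ell\vv_\ell^*$. Then $\mU_5\mU_5^*+\mE$ matches the eigendecomposition of $\mA_{\text{tik}}-\tfrac{1}{1+\alpha}\mA$ by construction; since the $\vv_\ell$ for $\ell\notin\setI$ are orthonormal, $\|\mE\| = \max_{\ell\notin\setI} d_\ell \leq \eps$, as required.

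The main obstacle is not any deep estimate but the delicate bookkeeping in the near-$1$ bound: the crucial observation is that the numerator of $d_\ell$ carries the factor $1-\lambda_\ell^2$, not merely $1-\lambda_\ell$, which is what allows the $\alpha$-free denominator lower bound $\lambda_\ell^2$ to suffice and thus makes the estimate robust as $\alpha\to 0^+$. Everything else is purely an algebraic reshuffling of the eigendecomposition.
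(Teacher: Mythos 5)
Your proposal is correct and takes essentially the same approach as the paper: diagonalize $\mA_{\text{tik}}-\tfrac{1}{1+\alpha}\mA$ in the eigenbasis of $\mA$, assign the eigenvalues inside the window $(\alpha(1+\alpha)\eps,\,1-\eps/3)$ to $\mU_5\mU_5^*$ and the rest to $\mE$, and verify the two-sided bound $0\le d_\ell\le\eps$ outside the window. Your near-one estimate is even marginally tighter than the paper's ($4\eps/5$ versus $24/25\,\eps$) because you cancel a factor of $\lambda_\ell$ before bounding instead of bounding numerator and denominator separately, but both arguments are otherwise identical.
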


\begin{proof}
We partition the eigenvalues of $\mA$ into two sets:
$$\setI_1 = \{\ell : \alpha(1+\alpha)\eps < \lambda_{\ell} < 1-\tfrac{1}{3}\eps\}, \quad \setI_2 = \{\ell : \lambda_{\ell} \le \alpha(1+\alpha)\eps \ \text{or} \ \lambda_{\ell} \ge 1-\tfrac{1}{3}\eps\}.$$
We can write
\[
\mA = \mV_1\mLambda_1\mV_1^* + \mV_2\mLambda_2\mV_2^*
\]
and
\[
\mA_{\text{tik}} = \mV_1(\mLambda_1^2+\alpha\mId)^{-1}\mLambda_1\mV_1^*+\mV_2(\mLambda_2^2+\alpha\mId)^{-1}\mLambda_2\mV_2^*,
\]
where the $\mV_i$ contain the eigenvectors from $\setI_i$ as their columns, and the $\mLambda_i$ are diagonal containing the corresponding eigenvalues. Thus,
\begin{align*}
	\mA_{\text{tik}}-\tfrac{1}{1+\alpha}\mA &= \mV_1\left[(\mLambda_1^2+\alpha\mId)^{-1}\mLambda_1-\tfrac{1}{1+\alpha}\mLambda_1\right]\mV_1^* + \mV_2\left[(\mLambda_2^2+\alpha\mId)^{-1}\mLambda_2-\tfrac{1}{1+\alpha}\mLambda_2\right]\mV_2^* \\
	&=: \mU_5\mU_5^* + \mE,
\end{align*}
where
$$\mU_5 = \mV_1\left[(\mLambda_1^2+\alpha\mId)^{-1}\mLambda_1-\tfrac{1}{1+\alpha}\mLambda_1\right]^{1/2},$$
$$\mE =\mV_2\left[(\mLambda_2^2+\alpha\mId)^{-1}\mLambda_2-\tfrac{1}{1+\alpha}\mLambda_2\right]\mV_2^*.$$
Notice that the number of columns in $\mU_5$ is the same as the size of $\setI_1$, which is exactly $r$.

Observe that the matrix $(\mLambda_2^2+\alpha\mId)^{-1}\mLambda_2-\tfrac{1}{1+\alpha}\mLambda_2$ is diagonal, and the diagonal entries are of the form $\frac{\lambda_{\ell}}{\lambda_{\ell}^2+\alpha} - \frac{\lambda_{\ell}}{1+\alpha} = \frac{\lambda_{\ell}(1-\lambda_{\ell}^2)}{(1+\alpha)(\lambda_{\ell}^2+\alpha)}$ where $\lambda_{\ell}$ satisfies either $0 \le \lambda_{\ell} \le \alpha(1+\alpha)\eps$ or $1-\frac{1}{3}\eps \le \lambda_{\ell} \le 1$. If $\lambda_{\ell} \le \alpha(1+\alpha)\eps$, then we have: $$0 \le \dfrac{\lambda_{\ell}(1-\lambda_{\ell}^2)}{(1+\alpha)(\lambda_{\ell}^2+\alpha)} \le \dfrac{\alpha(1+\alpha)\eps \cdot 1}{(1+\alpha)\alpha} = \eps.$$
If $1-\tfrac{1}{3}\eps \le \lambda_{\ell} \le 1$, then since $0 < \eps < \tfrac{1}{2}$, we also have $\lambda_{\ell} \ge 1-\tfrac{1}{3}\eps \ge \tfrac{5}{6}$, and thus: $$0 \le \dfrac{\lambda_{\ell}(1-\lambda_{\ell}^2)}{(1+\alpha)(\lambda_{\ell}^2+\alpha)} = \dfrac{\lambda_{\ell}(1+\lambda_{\ell})(1-\lambda_{\ell})}{(1+\alpha)(\lambda_{\ell}^2+\alpha)}\le \dfrac{1 \cdot 2 \cdot \tfrac{1}{3}\eps}{1 \cdot (\tfrac{5}{6})^2} = \dfrac{24}{25}\eps \le \eps.$$
In either case, $0 \le \frac{\lambda_{\ell}(1-\lambda_{\ell}^2)}{(1+\alpha)(\lambda_{\ell}^2+\alpha)} \le \eps$. Hence, $\|(\mLambda_2^2+\alpha\mId)^{-1}\mLambda_2-\tfrac{1}{1+\alpha}\mLambda_2\| \le \eps$, and thus $\|\mE\| \le \eps$.
\end{proof}

\section{Simulations}
\label{sec:simulations}
We close in this section by presenting several numerical simulations comparing our fast, approximate algorithms to the exact versions. All of our simulations were performed via a MATLAB software package that we have made available for download at http://mdav.ece.gatech.edu/software/. This software package contains all of the code necessary to reproduce the experiments and figures described in this paper.

\subsection{Fast projection onto the span of $\mS_K$}
To test our fast factorization of $\mS_K\mS_K^*$ and our fast projection method, we fix the half-bandwidth $W = \tfrac{1}{4}$ and vary the signal length $N$ over several values between $2^8$ and $2^{20}$. For each value of $N$ we randomly generate several length-$N$ vectors and project each one onto the span of the first $K = \operatorname{round}(2NW)$ elements of the Slepian basis using the fast factorization $\mT_1\mT_2^*$ and the fast projection matrix $\mB_{N,W} + \mU_1\mU_2^*$ for tolerances of $\eps = 10^{-3}$, $10^{-6}$, $10^{-9}$, and $10^{-12}$. The prolate matrix, $\mB_{N,W}$, is applied to the length $N$ vectors via an FFT whose length is the smallest power of $2$ that is at least $2N$. For values of $N \le 12288$, we also projected each vector onto the span of the first $K$ elements of the Slepian basis using the exact projection matrix $\mS_K\mS_K^*$. The exact projection could not be tested for values of $N > 12288$ due to computational limitations. A plot of the average time needed to project a vector onto the span of the first $K = \operatorname{round}(2NW)$ elements of the Slepian basis using the exact projection matrix $\mS_K\mS_K^*$ and the fast factorization $\mT_1\mT_2^*$ is shown in the top left in Figure~\ref{fig:SlepianProjectionTime}. A similar plot comparing the exact projection $\mS_K\mS_K^*$ and the fast projection $\mB_{N,W}+\mU_1\mU_2^*$ is shown in the top right in Figure~\ref{fig:SlepianProjectionTime}. As can be seen in the figures the time required by the exact projection grows quadratically with $N$, while the time required by the fast factorization as well as the fast projection grows roughly linearly in $N$.

For the exact projection, all of the first $K = \operatorname{round}(2NW)$ elements of the Slepian basis must be precomputed. For the fast factorization, the low rank matrices $\mL_1, \mL_2$ (from Theorem~\ref{thm:prolateFFTLR}) and the Slepian basis elements $\vs_{N,W}^{(\ell)}$ for which $\eps < \lambda_{N,W}^{(\ell)} < 1-\eps$ are precomputed. For the fast projection, the FFT of the sinc kernel, as well as the Slepian basis elements $\vs_{N,W}^{(\ell)}$ for which $\eps < \lambda_{N,W}^{(\ell)} < 1-\eps$ are precomputed.  A plot of the average precomputation time needed for both the exact projection $\mS_K\mS_K^*$ as well as the fast factorization $\mT_1\mT_2^*$ is shown in the top left in Figure~\ref{fig:SlepianProjectionSetupTime}. A similar plot comparing the exact projection $\mS_K\mS_K^*$ and the fast projection $\mB_{N,W}+\mU_1\mU_2^*$ is shown in the top right in Figure~\ref{fig:SlepianProjectionSetupTime}. As can be seen in the figures the precomputation time required by the exact projection grows roughly quadratically with $N$, while the precomputation time required by the fast factorization as well as the fast projection grows just faster than linearly in $N$.

This experiment was repeated with $W = \tfrac{1}{16}$ and $W = \tfrac{1}{64}$ (instead of $W = \tfrac{1}{4}$). The results for $W = \tfrac{1}{16}$ and $W = \tfrac{1}{64}$ are shown in the middle and bottom, respectively, of Figures~\ref{fig:SlepianProjectionTime} and ~\ref{fig:SlepianProjectionSetupTime}. The exact projection onto the first $K \approx 2NW$ elements of the Slepian basis takes $O(NK) = O(2WN^2)$ operations, whereas both our fast factorization and fast projection algorithms take $O(N \log N \log \tfrac{1}{\eps})$ operations. The smaller $W$ gets, the larger $N$ needs to be for our fast methods to be faster than the exact projection via matrix multiplication. If $W \lesssim \tfrac{1}{N}\log N \log \tfrac{1}{\eps}$, then our fast methods lose their computational advantage over the exact projection. However, in this case the exact projection is fast enough to not require a fast approximate algorithm.  

\begin{figure}%
   \centering
  \includegraphics[scale = 0.38]{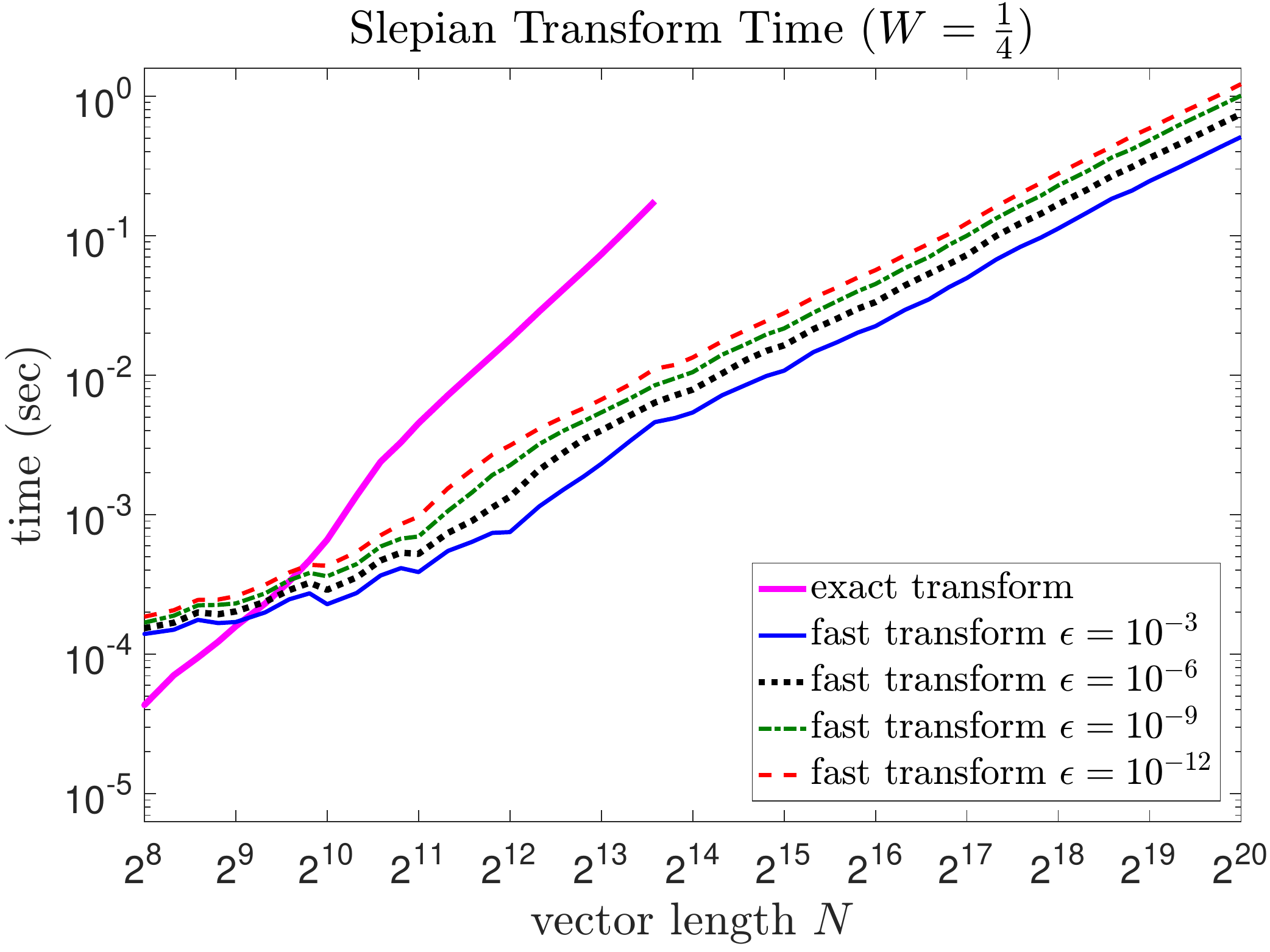} \includegraphics[scale = 0.38]{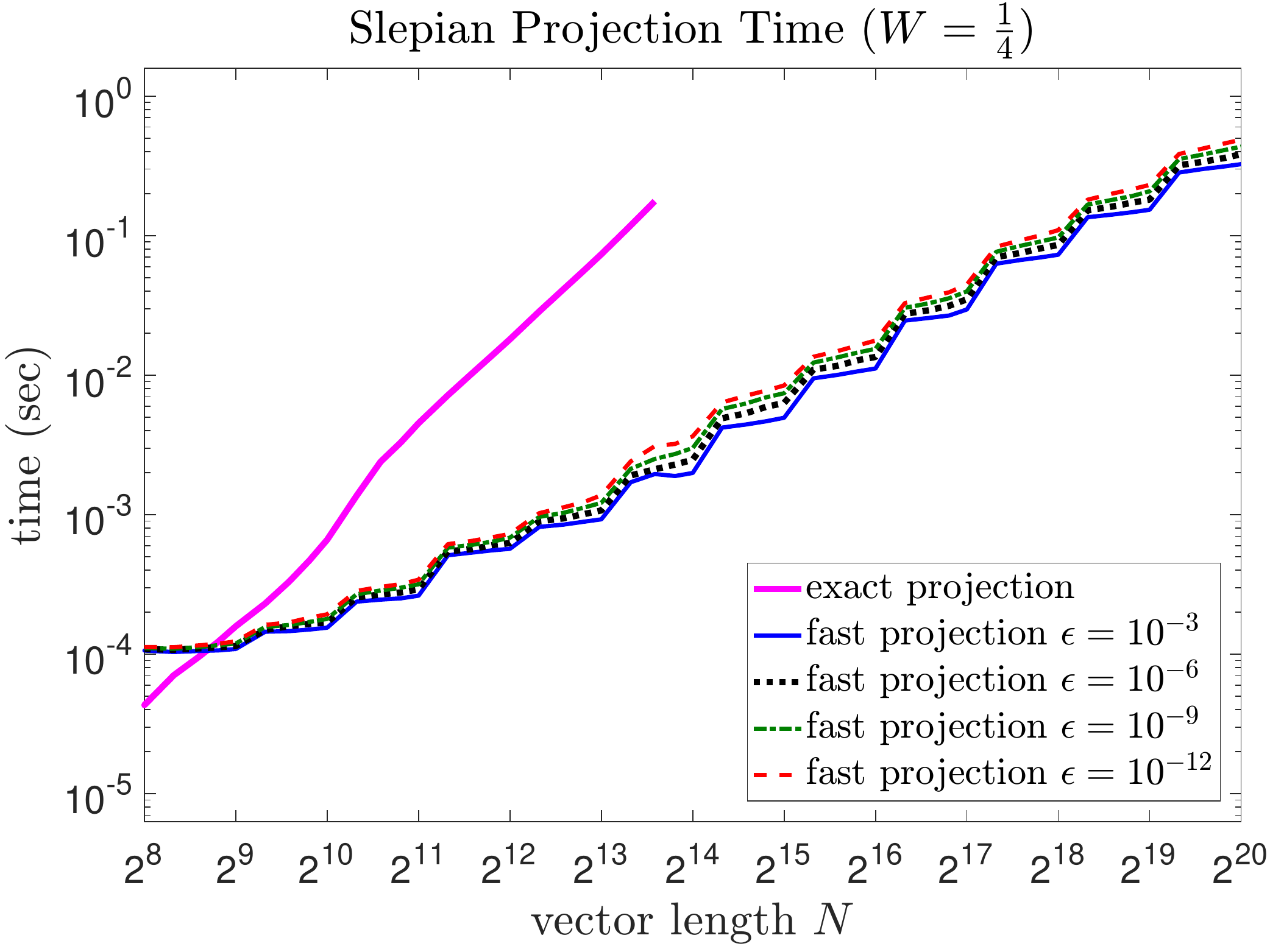}

  \includegraphics[scale = 0.38]{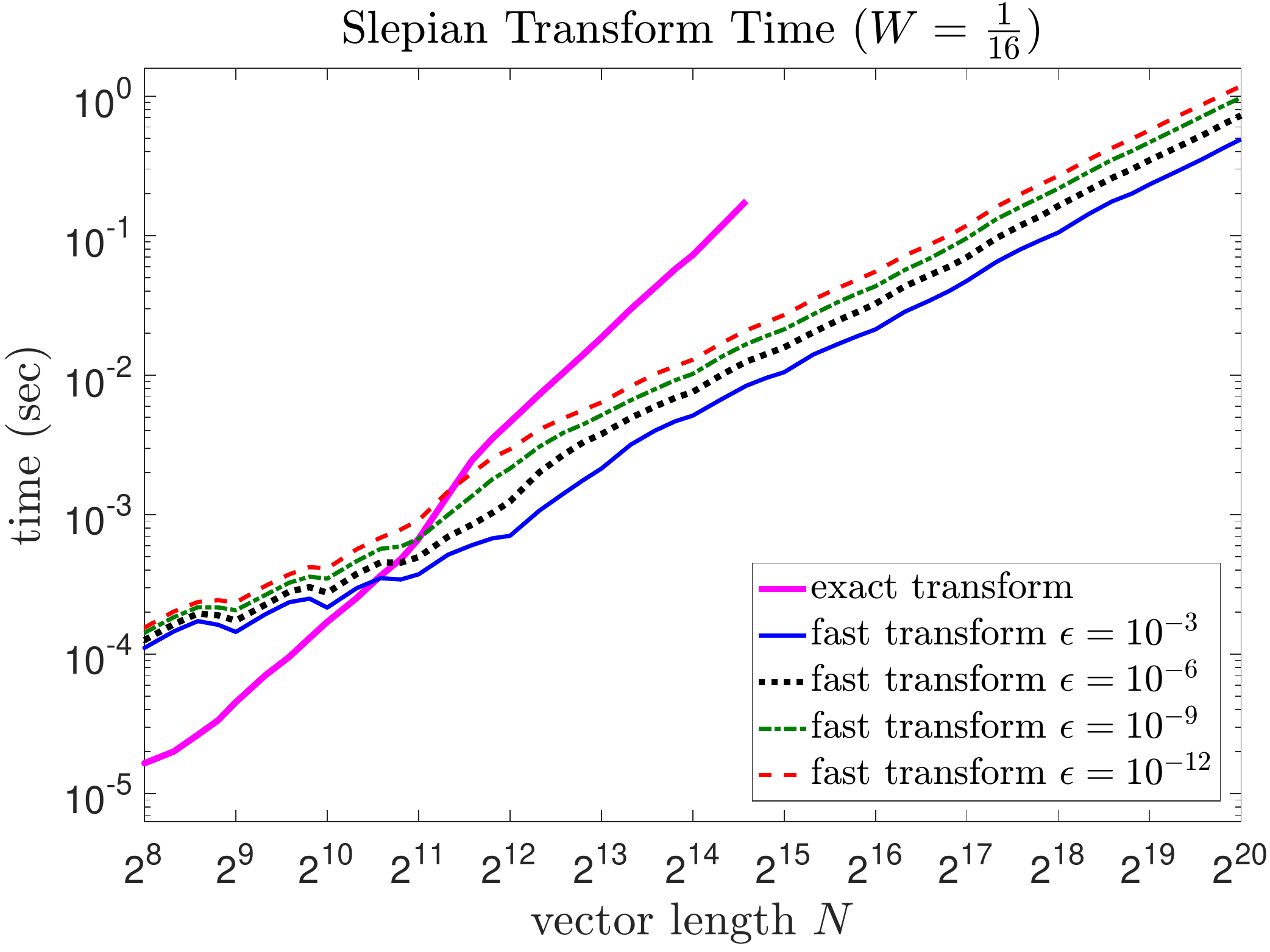} \includegraphics[scale = 0.38]{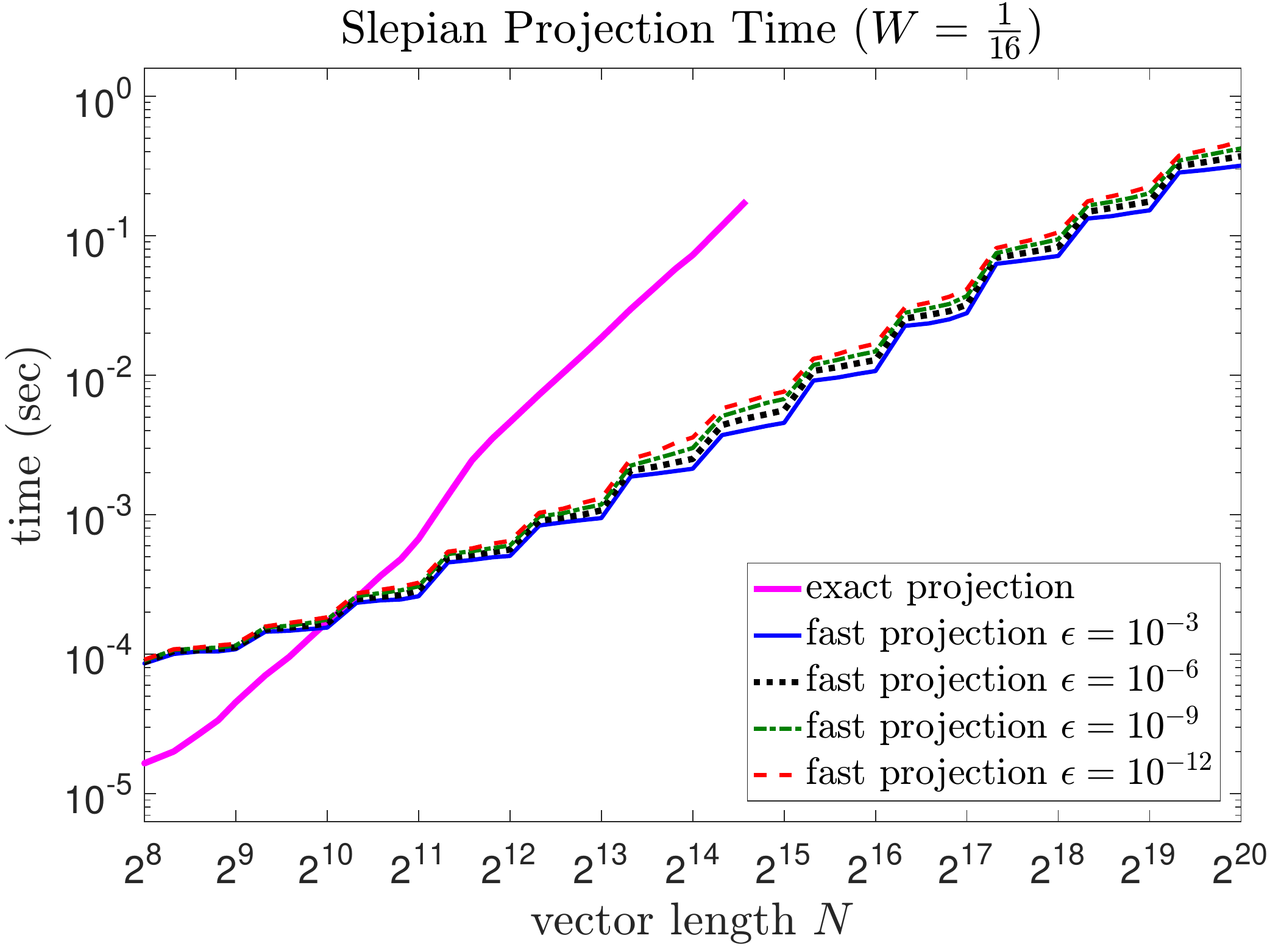}

  \includegraphics[scale = 0.38]{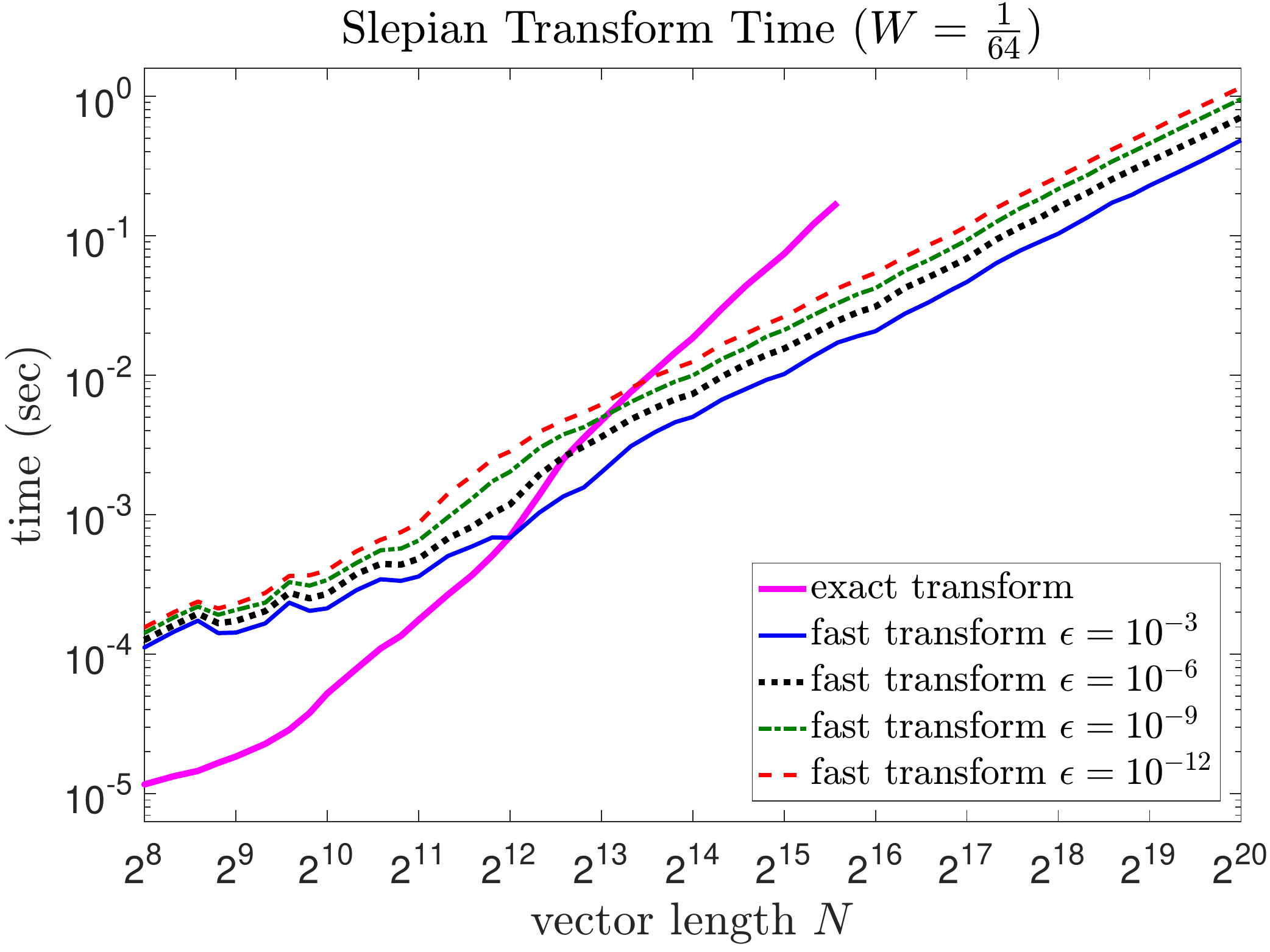} \includegraphics[scale = 0.38]{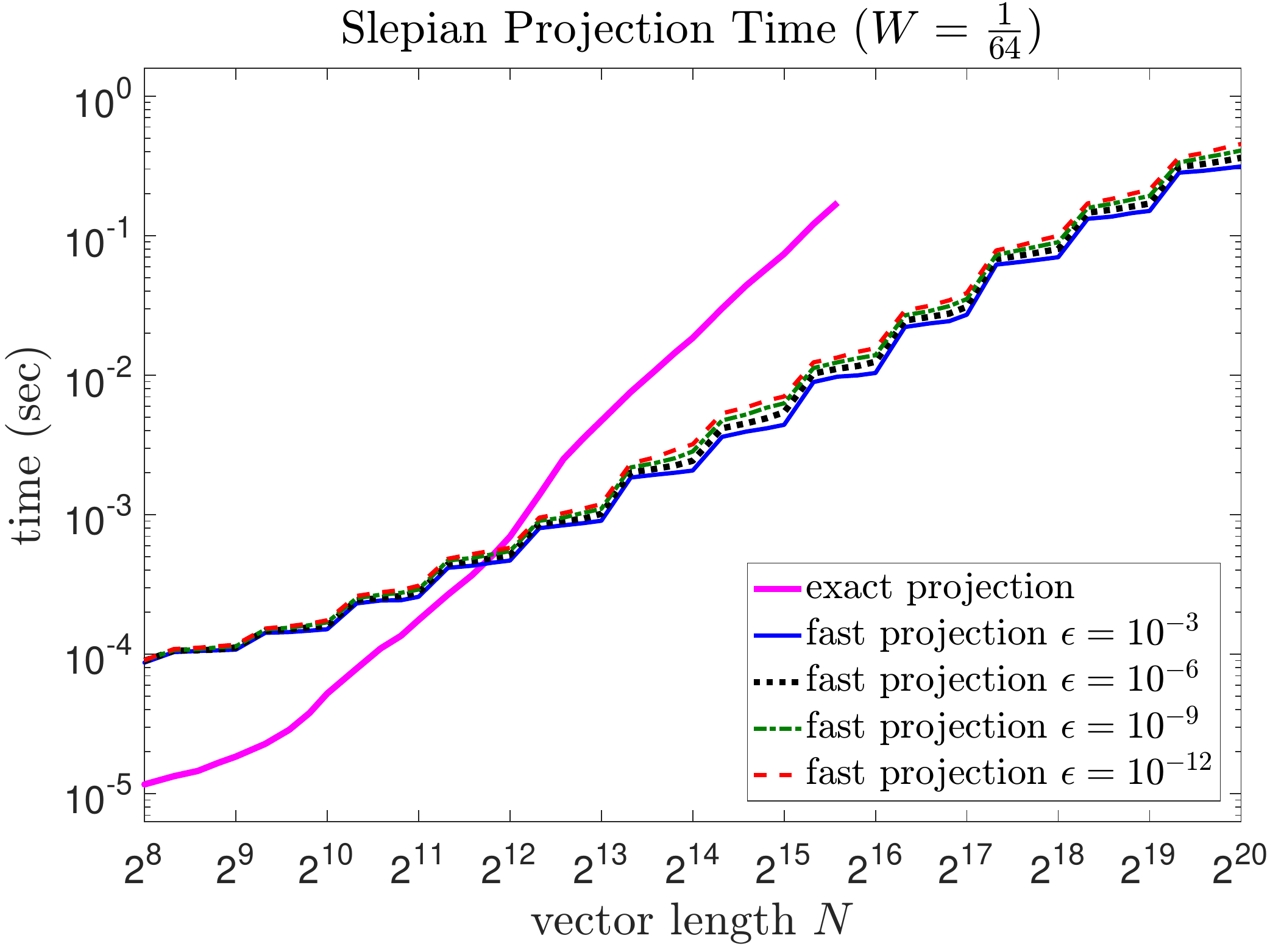}

   \caption{\small \sl (Left) Plots of the average time needed to project a vector onto the first $\operatorname{round}(2NW)$ Slepian basis elements using the exact projection $\mS_K\mS_K^*$ and using the fast factorization $\mT_1\mT_2^*$. (Right) Plots of the average time needed to project a vector onto the first $\operatorname{round}(2NW)$ Slepian basis elements using the exact projection $\mS_K\mS_K^*$ and using the fast projection $\mB_{N,W}+\mU_1\mU_2^*$.  \label{fig:SlepianProjectionTime}}
\end{figure}

\begin{figure}%
   \centering
  \includegraphics[scale = 0.38]{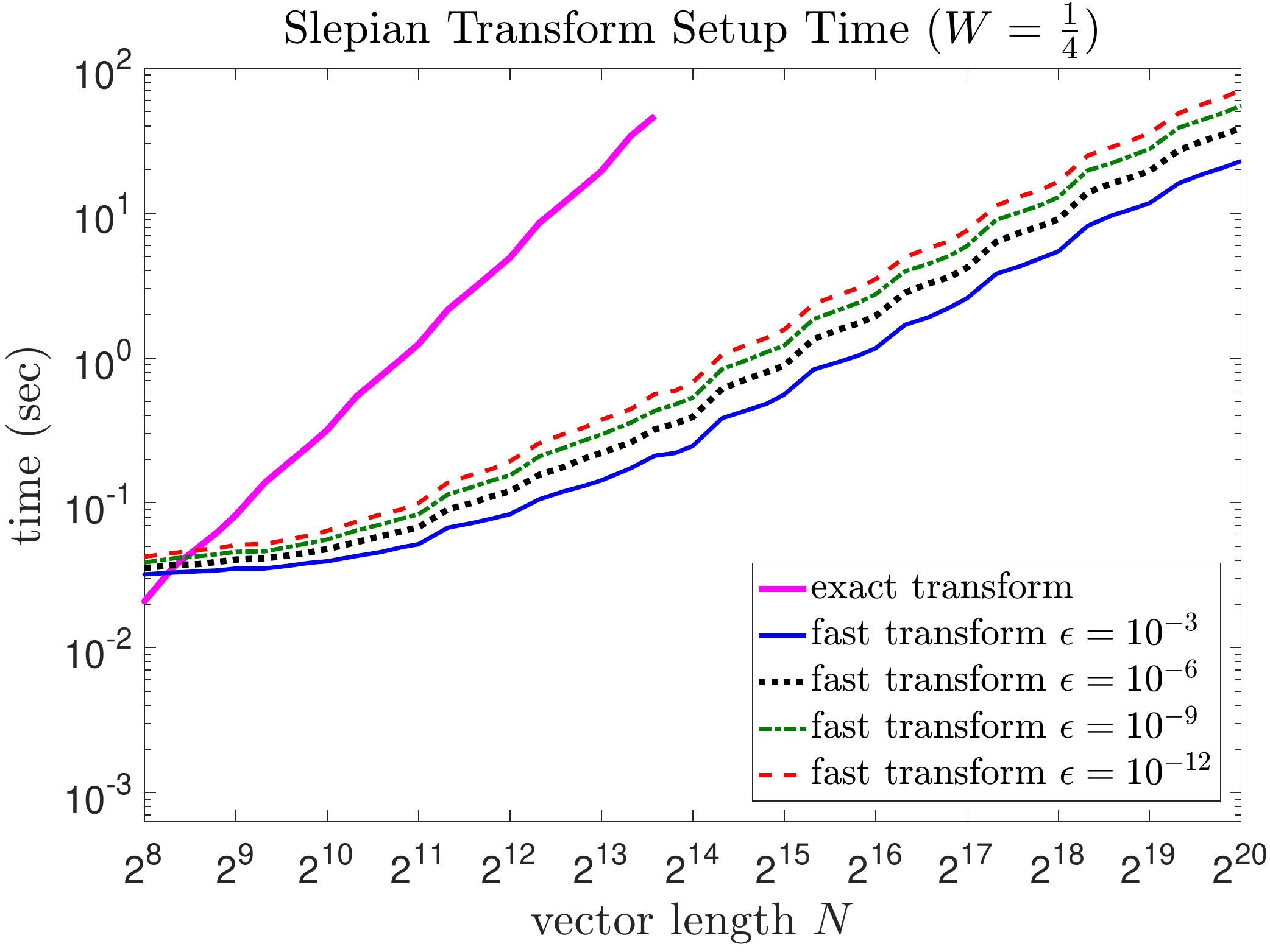} \includegraphics[scale = 0.38]{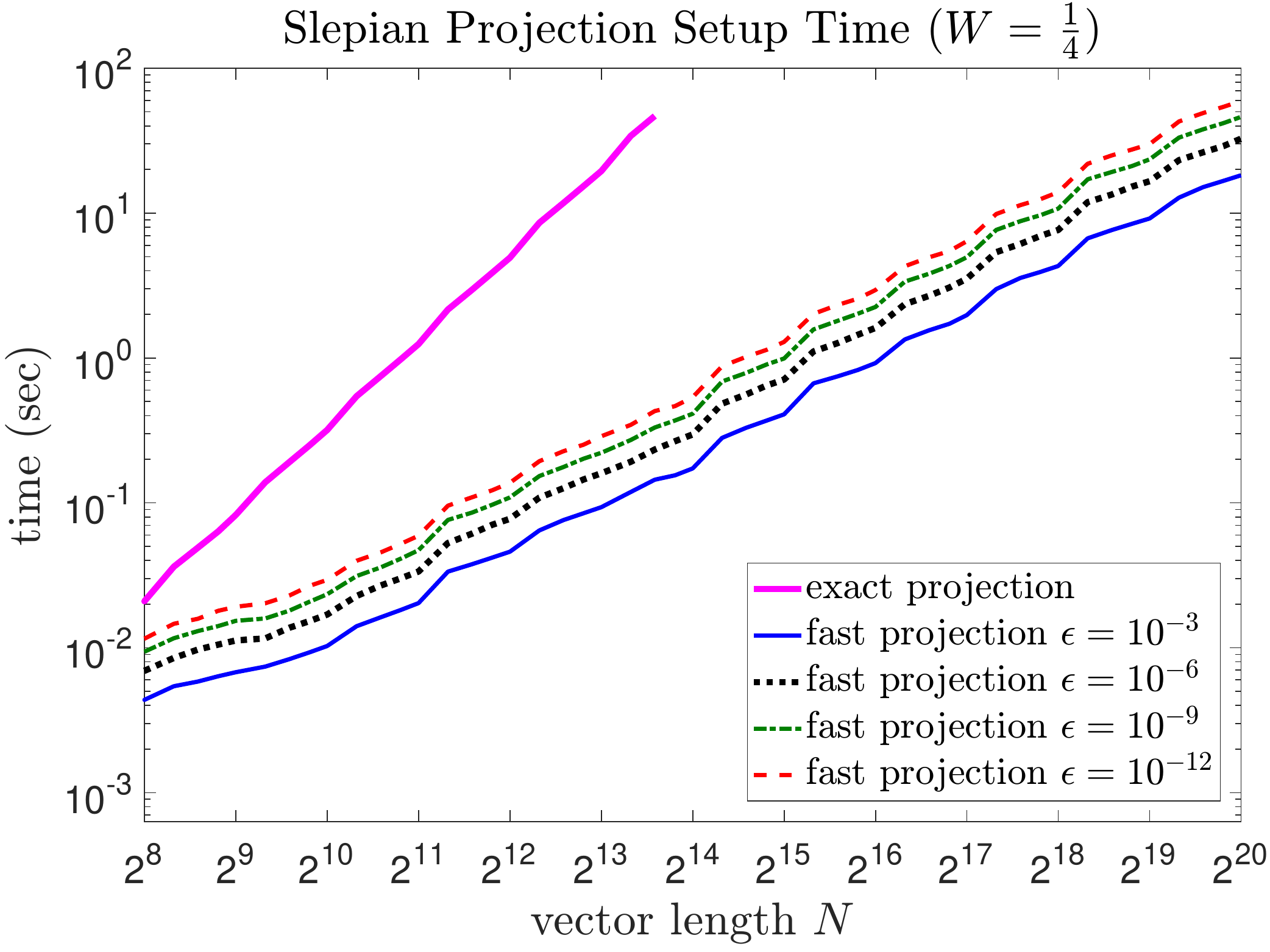}

  \includegraphics[scale = 0.38]{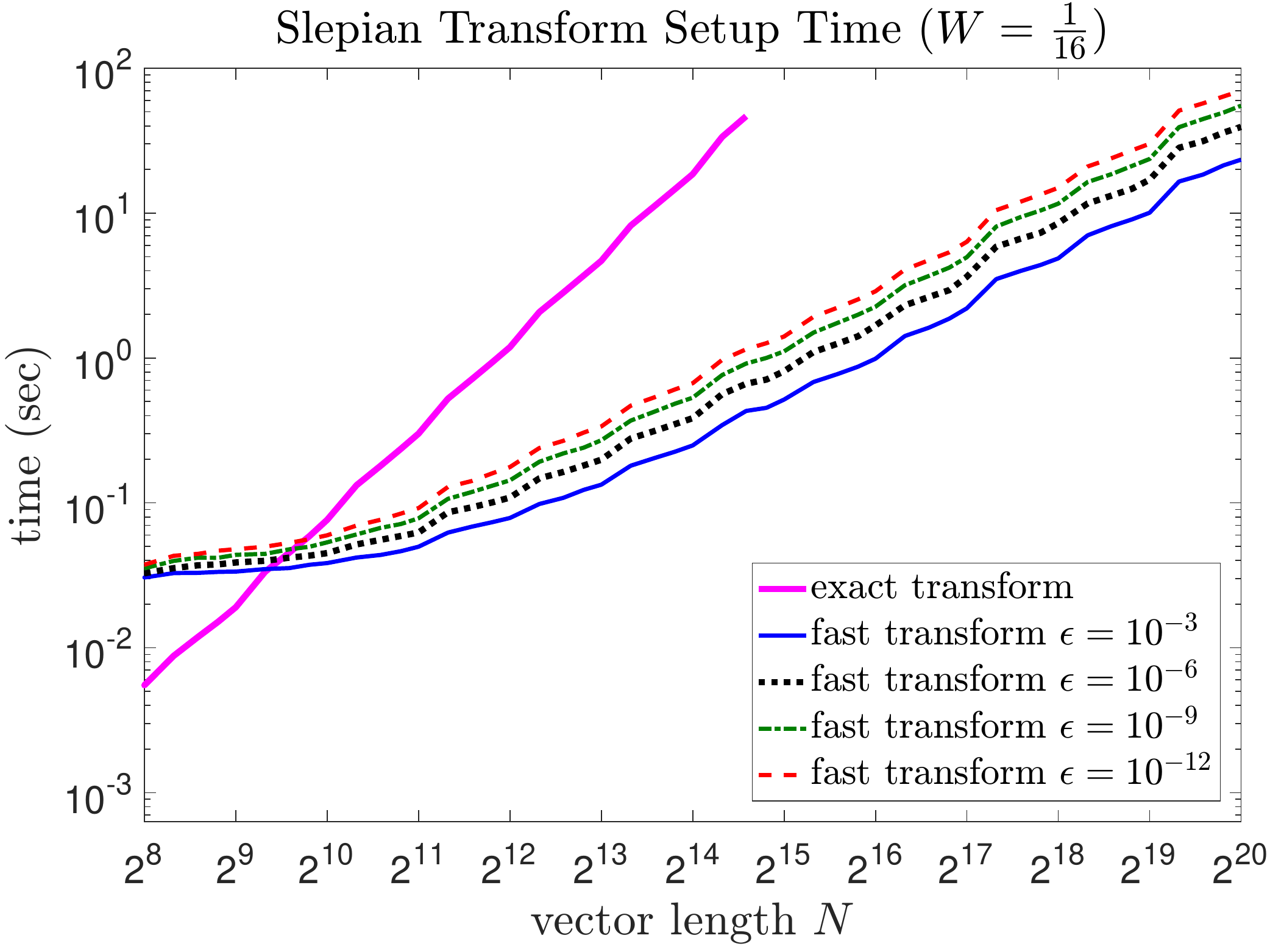} \includegraphics[scale = 0.38]{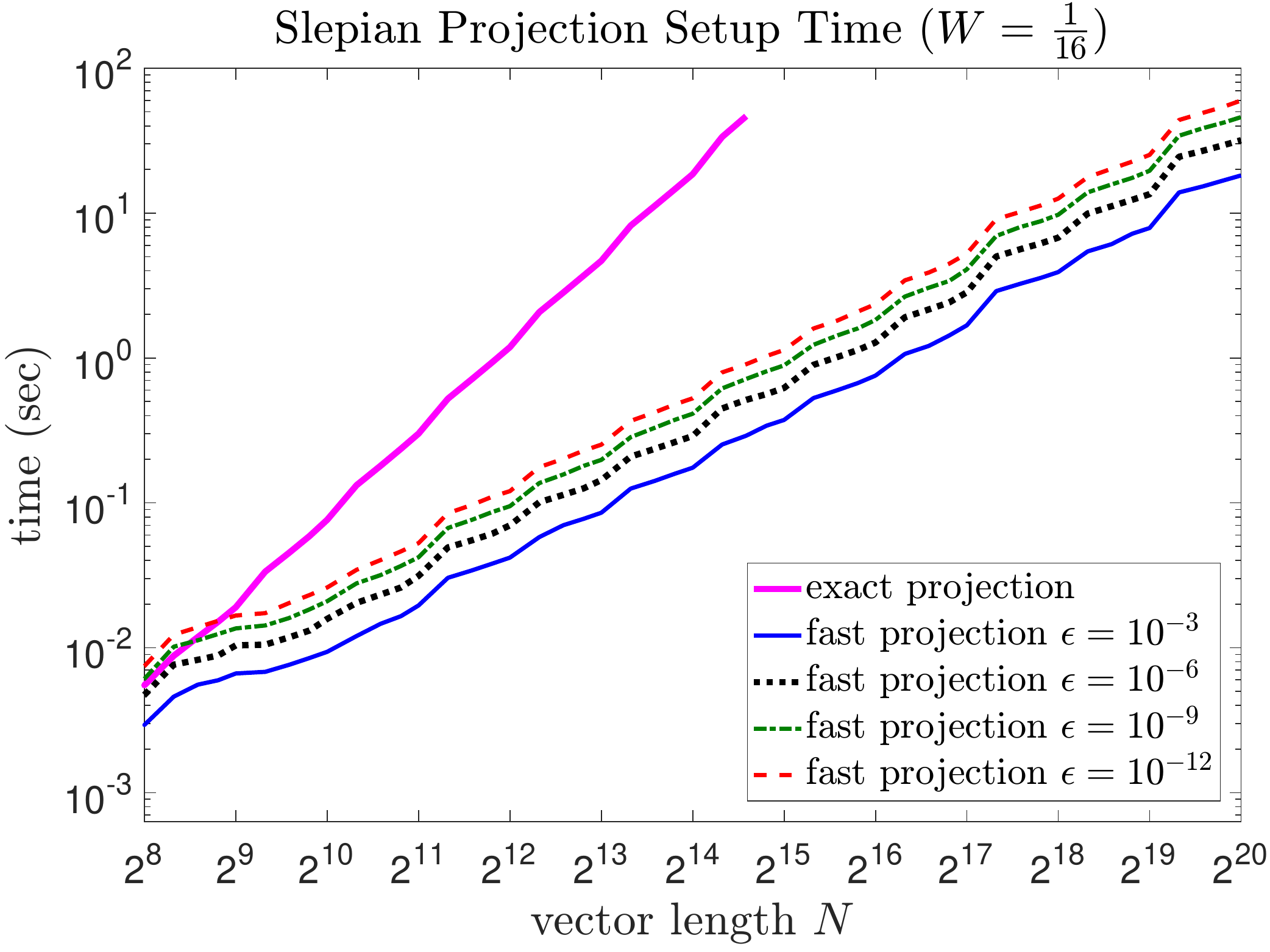}

  \includegraphics[scale = 0.38]{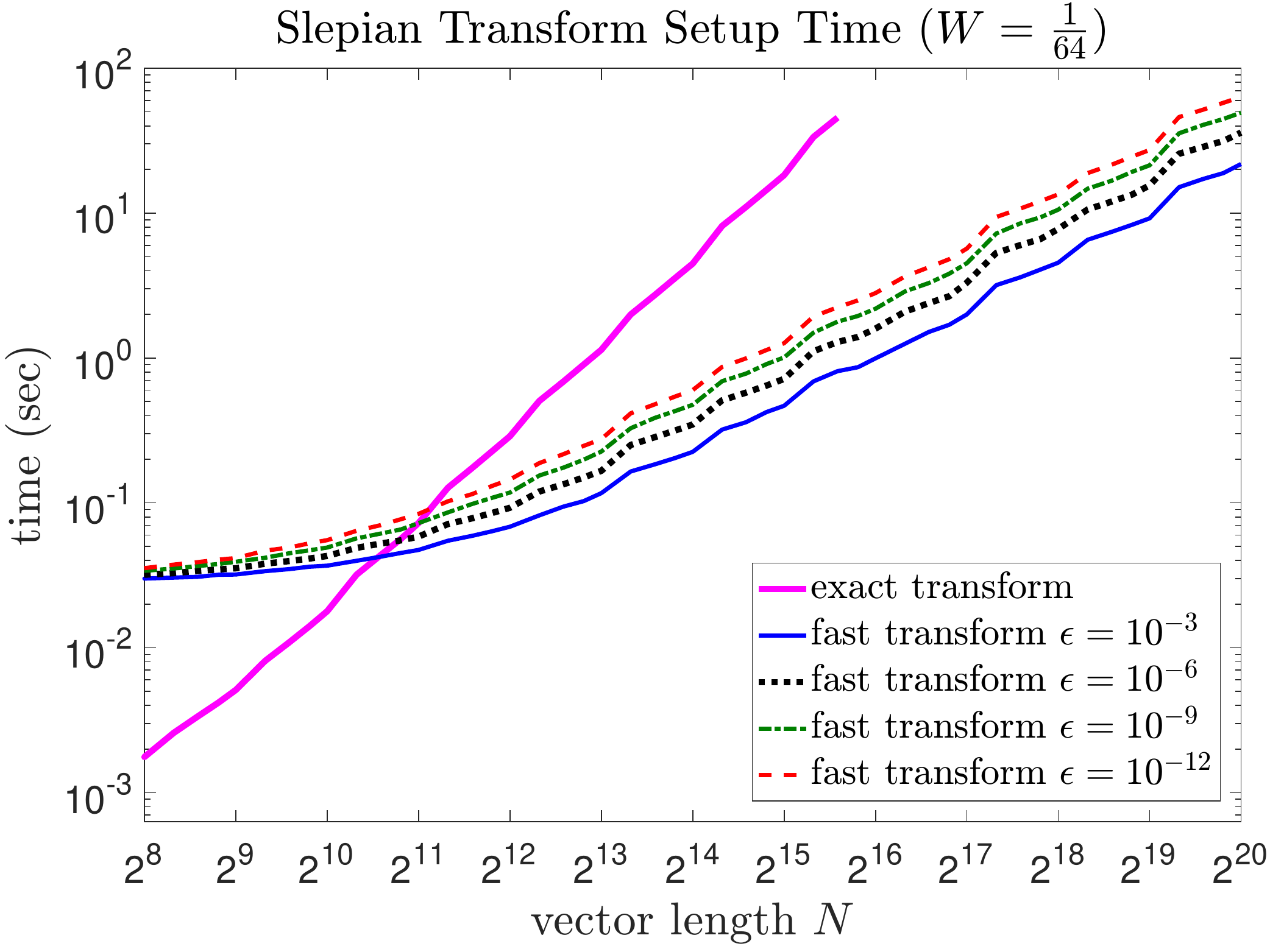} \includegraphics[scale = 0.38]{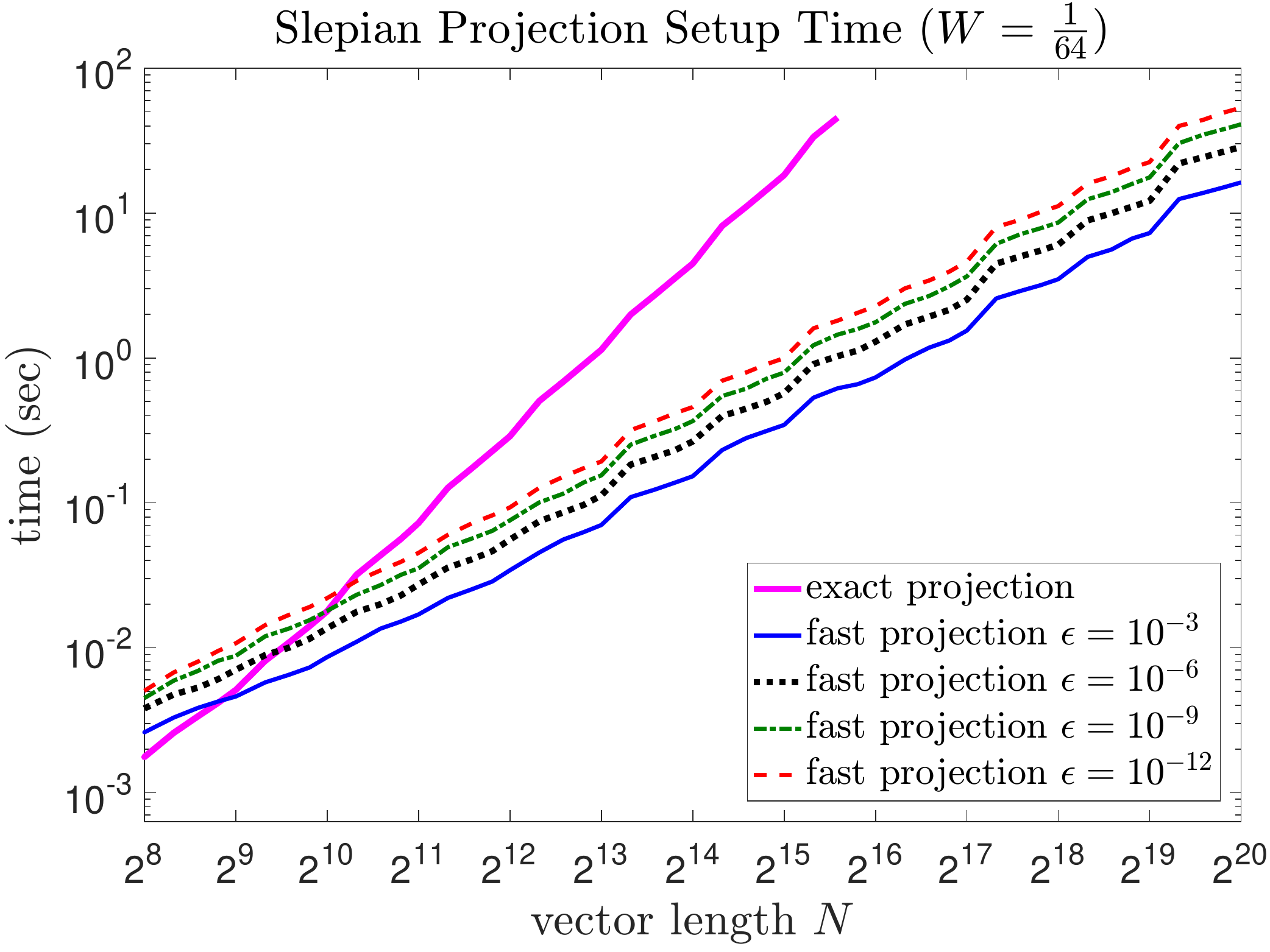}

   \caption{\small \sl (Left) Plots of the average precomputation time for the exact projection $\mS_K\mS_K^*$ and the fast factorization $\mT_1\mT_2^*$. (Right) Plots of the average precomputation time for the exact projection $\mS_K\mS_K^*$ and the fast projection $\mB_{N,W}+\mU_1\mU_2^*$. \label{fig:SlepianProjectionSetupTime}}
\end{figure}

\subsection{Solving least-squares systems involving $\mB_{N,W}$}
\label{sec:leastsqsystems}
We demonstrate the effectiveness of our fast prolate pseudoinverse method (Corollary~\ref{cor:prolatePseudoinverseLR}) and our fast prolate Tikhonov regularization method (Corollary~\ref{cor:prolateTikhonovLR}) on an instance of the Fourier extension problem, as described in Section~\ref{ssec:apps}. 

To choose an appropriate function $f$, we note that if $f$ is continuous and $f(-1) = f(1)$, then the Fourier sum approximations will not suffer from Gibbs phenomenon, and so, there is no need to compute a Fourier extension sum approximation for $f$. Also, if $f$ is smooth on $[-1,1]$ but $f(-1) \neq f(1)$, then the Fourier sum approximations will suffer from Gibbs phenomenon, but the Fourier extension series coefficients will decay exponentially fast. Hence, relatively few Fourier extension series coefficients will be needed to accurately approximate $f$, which makes the least squares problem of solving for these coefficients small enough for our fast methods to not be useful. However, in the case where $f$ is continuous but not smooth on $[-1,1]$ and $f(-1) \neq f(1)$, the Fourier series will suffer from Gibbs phenomenon, and the Fourier extension series coefficients will decay faster than the Fourier series coefficients, but not exponentially fast. So in this case, the number of Fourier extension series coefficients required to accurately approximate $f$ is not trivially small, but still less than the number of Fourier series coefficients required to accurately approximate $f$. Hence, computing a Fourier extension sum approximation to $f$ is useful and requires our fast methods.

We construct such a function $f : [-1,1] \to \R$ in the form $f(t) = a_0t + \sum_{\ell = 1}^{L}a_{\ell}\exp(-\tfrac{|t-\mu_{\ell}|}{\sigma_{\ell}})$ where $a_0 = 5$, $L = 500$, and $a_{\ell}$, $\mu_{\ell}$, and $\sigma_{\ell}$ are chosen in a random manner. A plot of $f(t)$ over $t \in [-1,1]$ is shown on the left in Figure~\ref{fig:FFE_Function}. Also on the right in Figure~\ref{fig:FFE_Function}, we show an example of a Fourier sum approximation and a Fourier extension approximation, both with $401$ terms. Notice that the Fourier sum approximation suffers from Gibbs phenomenon near the endpoints of the interval $[-1,1]$, while the Fourier extension approximation does not exhibit such oscillations near the endpoints of $[-1,1]$. 

\begin{figure}%
   \centering
\includegraphics[scale = 0.38]{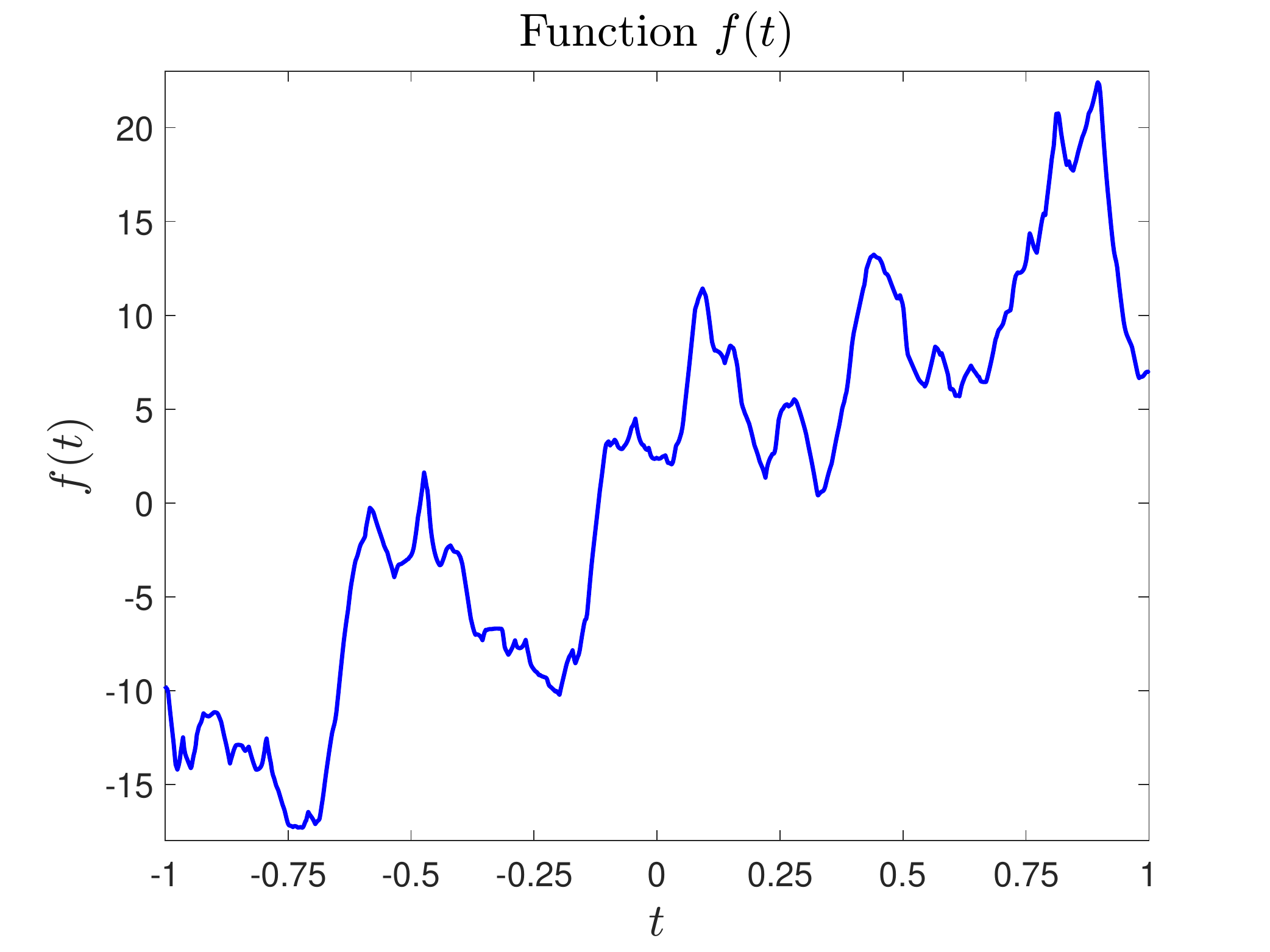} \includegraphics[scale = 0.38]{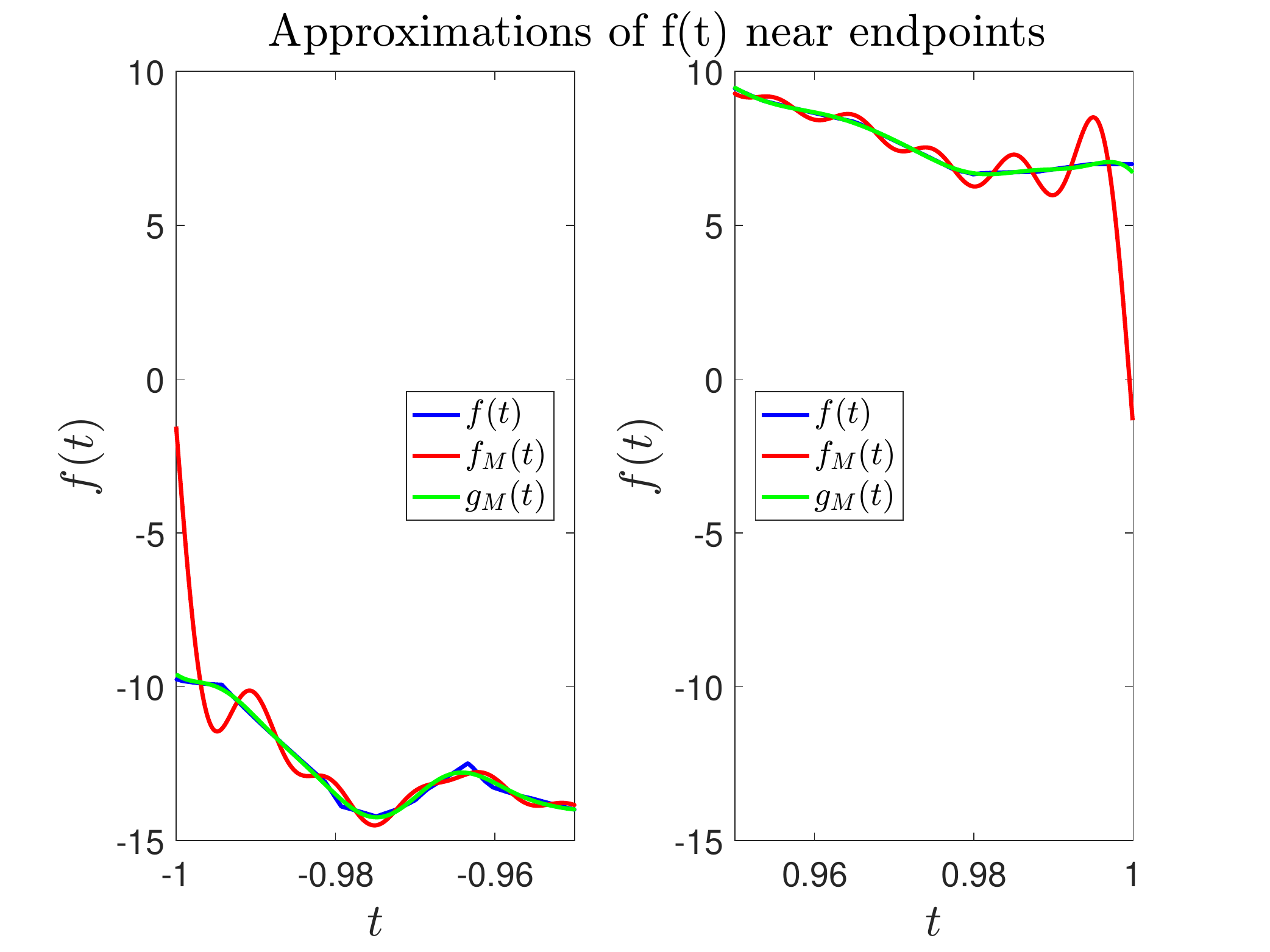}
   \caption{\small \sl (Left) A plot of the function used in the experiments described in Section~\ref{sec:leastsqsystems}. (Right) Plots of the function, the Fourier sum approximation to $f(t)$ using $401$ terms, and the Fourier extension approximation to $f(t)$ using $401$ terms. Note that the Fourier sum approximation suffers from Gibbs phenomenon oscillations while the Fourier extension sum does not. \label{fig:FFE_Function}}
\end{figure}
\vspace{0.1in}

For several positive integers $M$ between $1$ and $2560$, we compute three approximations to $f(t)$:
\begin{enumerate}[(i)]
\item The $2M+1$ term truncated Fourier series of $f(t)$, i.e., $$f_{M}(t) = \dfrac{1}{\sqrt{2}}\displaystyle\sum_{m = -M}^{M}\widehat{f}_me^{j\pi mt}, \ \text{where} \ \widehat{f}_m = \dfrac{1}{\sqrt{2}}\int_{-1}^{1}f(t)e^{j\pi mt}\,dt.$$

\item The $2M+1$ term Fourier extension of $f(t)$ to the interval $[-T,T]$, i.e., $$g_{M}(t) = \dfrac{1}{\sqrt{2T}}\displaystyle\sum_{m = -M}^{M}\widehat{g}_me^{j\pi mt/T}, \ \text{where} \ \widehat{y}_m = \dfrac{1}{\sqrt{2T}}\int_{-1}^{1}f(t)e^{j\pi mt/T}\,dt \ \text{and} \ \widehat{g} = \mB_{(2M+1),\tfrac{1}{2T}}^{\dagger}\widehat{y}.$$ Here, we pick $T = 1.5$, and we let $\mB_{(2M+1),\tfrac{1}{2T}}^{\dagger}$ be the truncated pseudoinverse of $\mB_{(2M+1),\tfrac{1}{2T}}$ which zeros out eigenvalues smaller than $10^{-4}$.

\item The $2M+1$ term Fourier extension of $f(t)$ to the interval $[-T,T]$ (as described above), except we use the fast prolate pseudoinverse method (Corollary~\ref{cor:prolatePseudoinverseLR}) with tolerance $\eps = 10^{-5}$ instead of the exact truncated pseudoinverse.
\end{enumerate}

The integrals used in computing the coefficients are approximated using an FFT of length $2^{13+q}$ where $q = \left\lfloor\log_2 M\right\rfloor$. By increasing the FFT length with $M$, we ensure that the coefficients are sufficiently approximated, while also ensuring that the time needed to compute the FFT does not dominate the time needed to solve the system $\mB_{(2M+1),\tfrac{1}{2T}}\widehat{g} = \widehat{y}$. Given an approximation $\hat{f}(t)$ to $f(t)$, we quantify the performance via the relative root-mean-square (RMS) error:

\vspace{-0.15in}
 $$\frac{\|f-\hat{f}\|_{L^2[-1,1]}}{\|f\|_{L^2[-1,1]}}$$
\vspace{-0.10 in}

A plot of the relative RMS error versus $M$ for each of the three approximations to $f(t)$ is shown on the left in Figure~\ref{fig:FFE_Pinv}. For values of $M$ at least $448$, the Fourier extension $g_M(t)$ (computed with either the exact or the fast pseudoinverse) yielded a relative RMS error at least $10$ times lower than that for the truncated Fourier series $f_M(t)$. Using the exact pseudoinverse instead of the fast pseudoinverse does not yield a noticable improvement in the approximation error. A plot of the average time needed to compute the approximation coefficients versus $M$ is shown on the right in Figure~\ref{fig:FFE_Pinv}. For large $M$, computing the Fourier extension coefficients using the fast prolate pseudoinverse is significantly faster than computing the Fourier extension coefficients  using the fast prolate pseudoinverse. Also, computing the Fourier extension coefficients using the fast prolate pseudoinverse takes only around twice the time required for computing the Fourier series coefficients.
\begin{figure}%
   \centering
  \includegraphics[scale = 0.38]{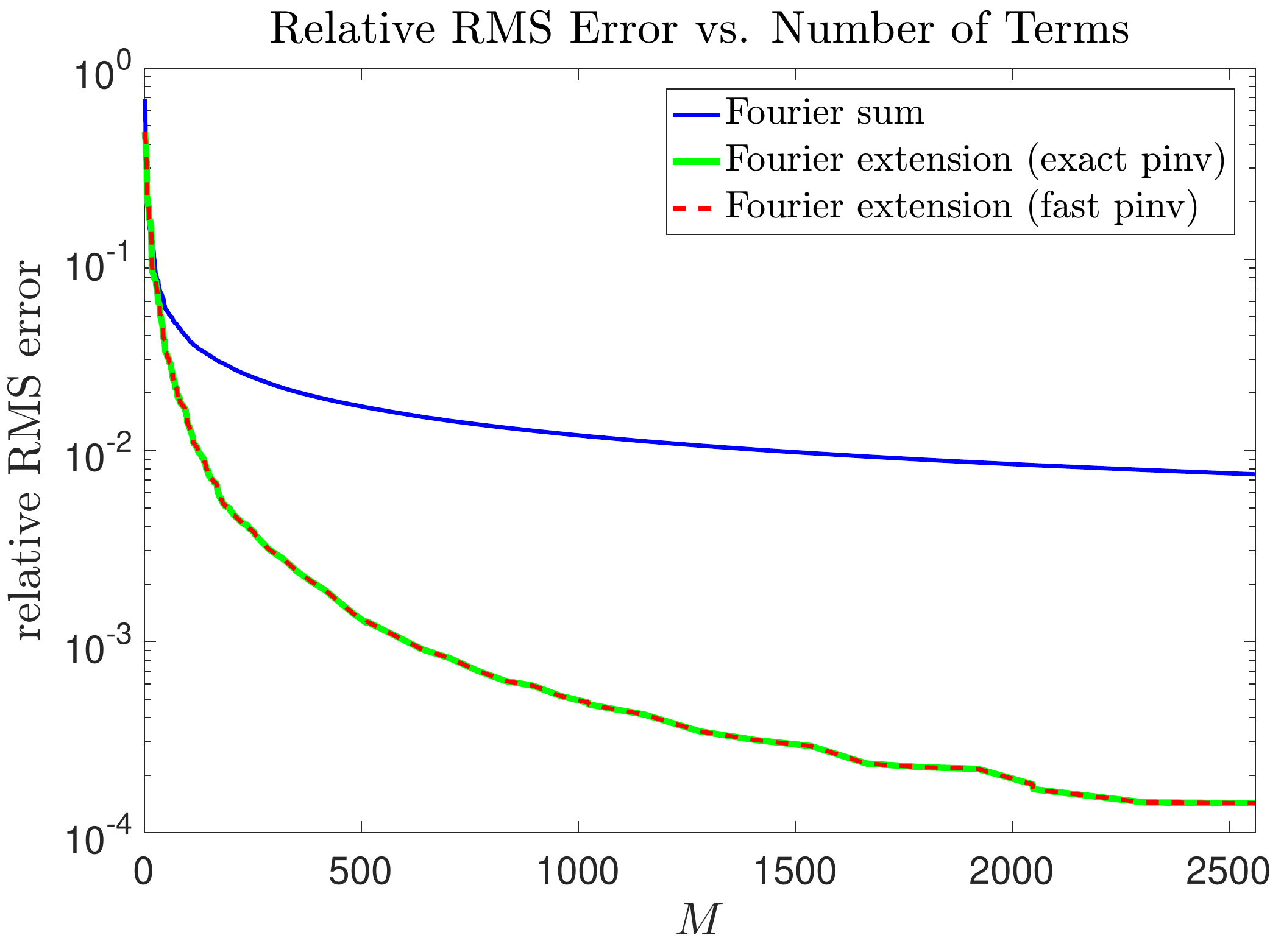} \includegraphics[scale = 0.38]{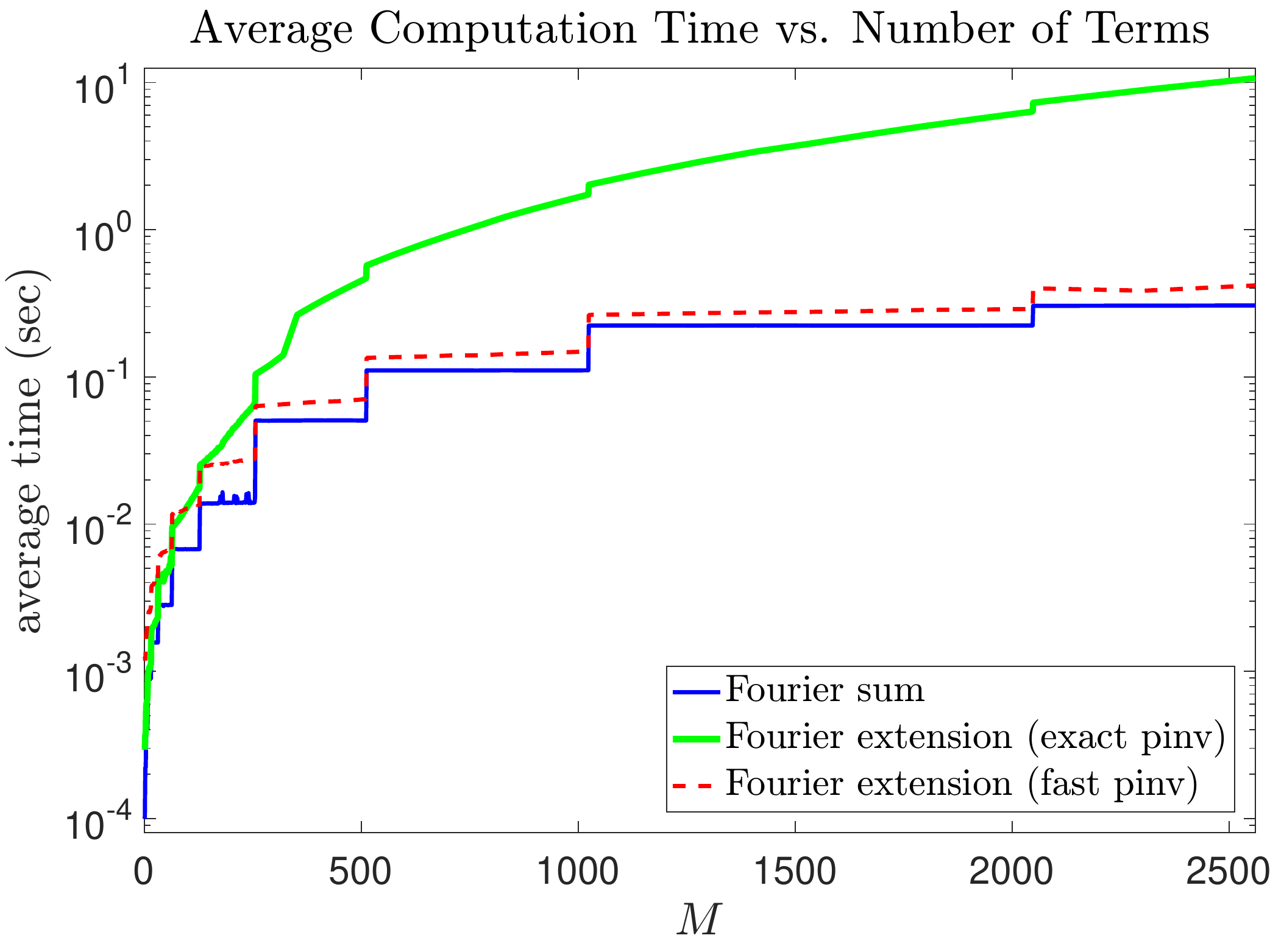}
   \caption{\small \sl A comparison of the relative RMS error (left) and the computation time required (right) for the $2M+1$ term truncated Fourier series as well as the $2M+1$ term Fourier extension using both the exact and fast pseudoinverse methods. Note that the exact and fast methods are virtually indistinguishable in terms of relative RMS error.\label{fig:FFE_Pinv}}
\end{figure}

We repeated this experiment, except using Tikhonov regularization to solve the system $\mB_{(2M+1),\tfrac{1}{2T}}\widehat{g} = \widehat{y}$ instead of the truncated pseudoinverse. We tested both the exact Tikhonov regularization procedure $\widehat{g} = (\mB_{(2M+1),\tfrac{1}{2T}}^2+\alpha\mId)^{-1}\mB_{(2M+1),\tfrac{1}{2T}}\widehat{y}$ (for $\alpha = 10^{-8}$) as well as the fast Tikhonov regularization method (Corollary~\ref{cor:prolateTikhonovLR}) with a tolerance of $\eps = 10^{-5}$. The results, which are similar to those for the pseudoinverse case, are shown in Figure~\ref{fig:FFE_Tik}.
\begin{figure}%
   \centering
  \includegraphics[scale = 0.38]{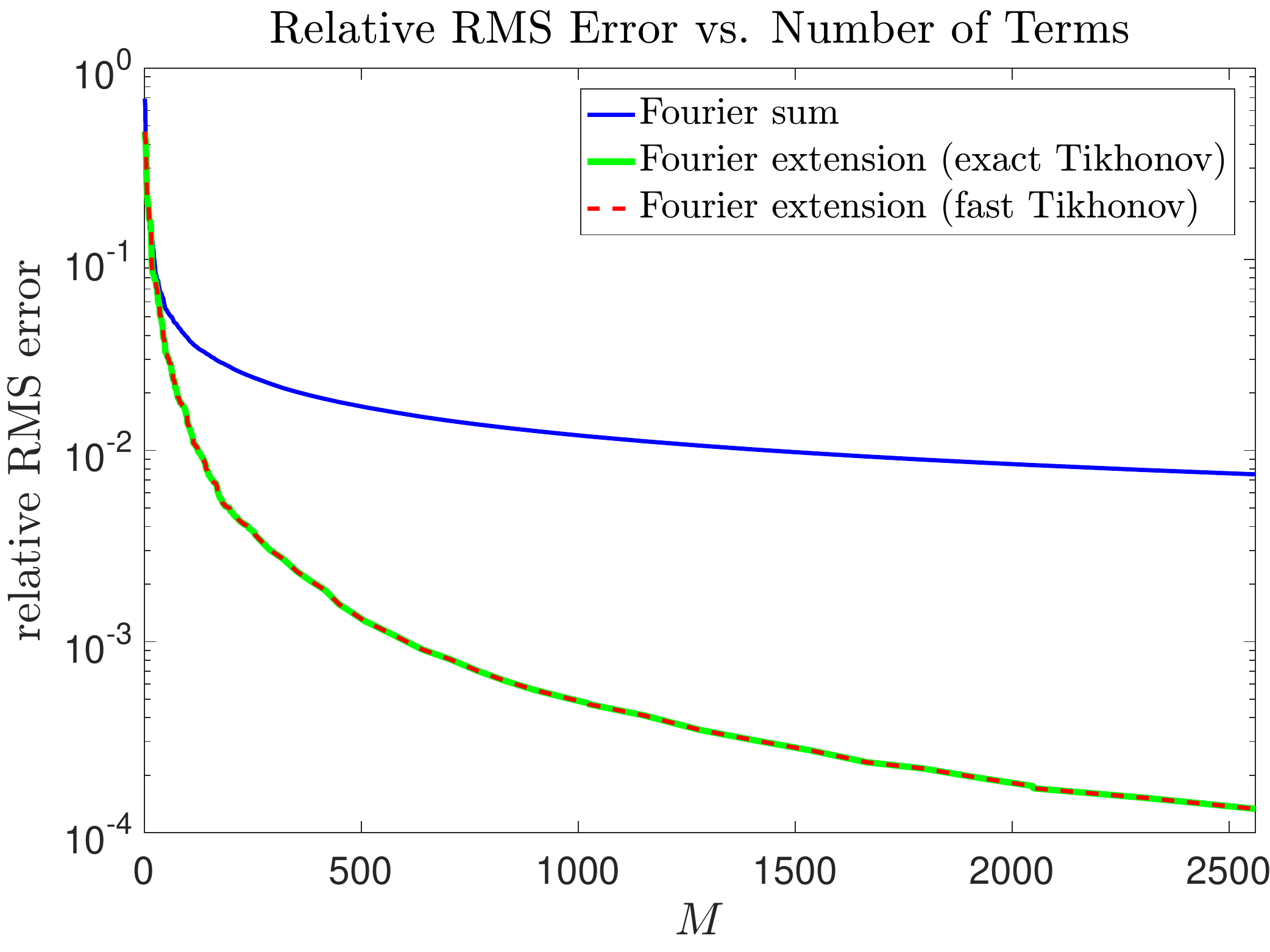} \includegraphics[scale = 0.38]{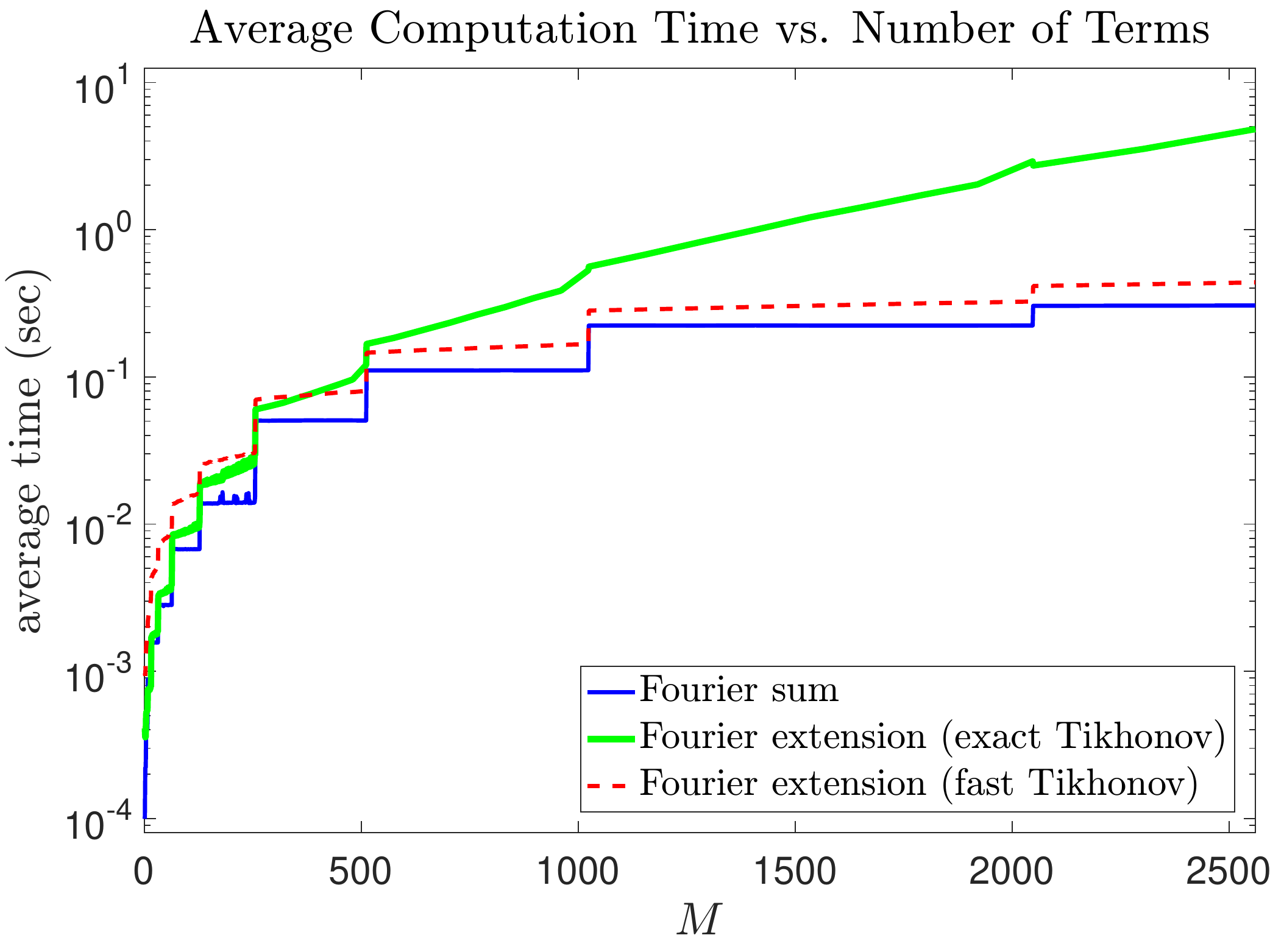}
   \caption{\small \sl A comparison of the relative RMS error (left) and the computation time required (right) for the $2M+1$ term truncated Fourier series as well as the $2M+1$ term Fourier extension using both the exact and fast Tikhonov regularization methods. Note that the exact and fast methods are virtually indistinguishable in terms of relative RMS error.\label{fig:FFE_Tik}}
\end{figure}


\appendix
\section{Proof of Lemma~\ref{lem:Lyapunov}}

Iterative methods for efficiently computing a low-rank approximation to the solution of a Lyapunov system have been well-studied \cite{lu91so,wachspress13ad}.  The CF-ADI algorithm presented in \cite{li02lo} constructs a factor $\mZ$ by concatenating a series of $r$ $N\times M$ matrices, $\mZ = \begin{bmatrix} \mZ_1 & \mZ_2 & \cdots & \mZ_r \end{bmatrix}$, where
\begin{align*}
	\mZ_1 &= \sqrt{2p_1}(\mA+p_1\mId)^{-1}\mB \\
	\mZ_k &= \sqrt{\frac{p_k}{p_{k-1}}}\left(\mId - (p_k+p_{k-1})(\mA+p_k\mId)^{-1}\right)\mZ_{k-1}, \quad k=2,\ldots,r,
\end{align*}
for some choice of positive real numbers $p_1,\ldots,p_r$.  It is shown in \cite{li02lo} that the matrix $\mZ\mZ^*$ produced by this iteration is equivalent to the matrix produced by the ADI iteration given in \cite{lu91so}, and thus, $\mZ\mZ^*$ satisfies
\[
\mX-\mZ\mZ^* = \phi(\mA)\mX\phi(\mA)^* \ \text{where} \ \phi(x) = \prod_{j = 1}^{r}\dfrac{x-p_j}{x+p_j}.
\]
(This is shown in \cite{lu91so} by using induction on $r$.) Therefore, the error $\|\mX-\mZ\mZ^*\|$ satisfies
\[
\|\mX-\mZ\mZ^*\| \le \|\mX\| \cdot \|\phi(\mA)\|^2 = \|\mX\| \cdot \max_{x \in \text{Spec}(\mA)}|\phi(x)|^2 \le \|\mX\| \cdot \max_{x \in[a,b]}|\phi(x)|^2,
\]
where $a = \lambda_{\text{min}}(\mA)$ and $b = \lambda_{\text{max}}(\mA)$ (so $\kappa = \tfrac{b}{a}$). In \cite{wachspress13ad}, it is shown that for a given interval $[a,b]$ and a number of ADI iterations $r$, there exists a choice of parameters $p_1,\ldots,p_r$ such that $\max_{x \in[a,b]}|\phi(x)|^2 = \alpha$, where $\alpha$ satisfies
\[
\dfrac{I(\sqrt{1-\alpha^2})}{I(\alpha)} = \dfrac{4rI(\kappa^{-1})}{I(\sqrt{1-\kappa^{-2}})},
\]
where $I(\tau)$ is the complete elliptic integral of the first kind, defined by
\[
I(\tau) := \int_{0}^{\pi/2}(1-\tau^2\sin^2\theta)^{-1/2}\,d\theta.
\]
It is shown in \cite{lawden} that the elliptic nome, defined as
\[
q(\tau) := \exp\left[-\pi\dfrac{I(\sqrt{1-\tau^2})}{I(\tau)}\right]
\]
satisfies
\[
\tau^2 = 16q(\tau)\displaystyle\prod_{n = 1}^{\infty}\left(\dfrac{1+q(\tau)^{2n}}{1+q(\tau)^{2n-1}}\right)^8.
\]
For $0 \le \tau \le 1$,  the range of the elliptic nome is $0 \le q(\tau) \le 1$. Hence, the above equation gives us the inequality $\tau^2 \le 16q(\tau)$. By using the definition of the elliptic nome, this inequality becomes
\[
\dfrac{I(\sqrt{1-\tau^2})}{I(\tau)} \le \dfrac{2}{\pi}\log\dfrac{4}{\tau} \ \text{for} \ 0 \le \tau \le 1.
\]
So, by setting the number of iterations as $r =\left\lceil\frac{1}{\pi^2}\log\left(4\kappa\right)\log\left(\tfrac{4}{\delta}\right) \right\rceil$, we have
\[
\dfrac{2}{\pi}\log\dfrac{4}{\alpha} \ge \dfrac{I(\sqrt{1-\alpha^2})}{I(\alpha)} = \dfrac{4rI(\kappa^{-1})}{I(\sqrt{1-\kappa^{-2}})} \ge \dfrac{4 \cdot \tfrac{1}{\pi^2}\log(4\kappa)\log(\tfrac{4}{\delta})}{\tfrac{2}{\pi}\log(4\kappa)} = \dfrac{2}{\pi}\log\dfrac{4}{\delta}.
\]
Hence, $\max_{x \in[a,b]}|\phi(x)|^2 = \alpha \le \delta$, and thus, $\|\mX-\mZ\mZ^*\| \le \delta\|\mX\|$, as desired. \qed

\remark{It is shown in \cite{lu91so} that \eqref{eq:rankH} is a good approximation for the number of iterations needed to get the relative error less than $\delta$, provided that $\kappa \gg 1$. It is shown in \cite{wachspress13ad} that \eqref{eq:rankH} is a good approximation provided that $r \ge 3$. Here, we have shown that \eqref{eq:rankH} is sufficient to guarantee a strict bound on the error.}

\remark{The choice of parameters $p_1,\ldots,p_r$ which minimizes $\max_{x \in [a,b]}|\phi(x)|^2$ is given by the formula $p_k = b\text{dn}\left[\tfrac{2k-1}{2r}I(\sqrt{1-\kappa^{-2}}),\sqrt{1-\kappa^{-2}}\right],$ where $\text{dn}[z,\tau]$ is the Jacobi elliptic function. This function is defined as $\text{dn}[z,\tau] = \sqrt{1-\tau^2\sin^2\varphi}$, where $\varphi$ satisfies $\int_{0}^{\varphi}(1-\tau^2\sin^2\theta)^{-1/2}\,d\theta = z$. If the Jacobi elliptic function $\text{dn}$ is not available, a suboptimal choice of parameters $p_1,\ldots,p_r$ is given by $p_k = a^{\tfrac{2k-1}{2r}}b^{\tfrac{2r-2k+1}{2r}}$, i.e., we can pick the parameters to be evenly spaced on a log scale.}

\remark{If the matrix $\mA$ is diagonal, each iteration of the CF-ADI algorithm above will take $O(N)$ operations. Hence, the matrix $\mZ$ can be computed in $O(rN)$ operations.}

\frenchspacing
\bibliographystyle{unsrt}
\bibliography{bibfileFAST}

\end{document}